\newcommand{\1}{\mathds{1}}
\newcommand{\tr}{\operatorname{tr}}
\renewcommand{\epsilon}{\varepsilon}
\renewcommand{\phi}{\varphi}
\renewcommand{\AA}{\mathfrak A}
\newcommand{\HH}{\mathfrak H}
\newcommand{\E}{\mathcal E}
\renewcommand{\H}{\mathcal H}
\newcommand{\I}{\mathcal I}
\newcommand{\J}{\mathcal J}
\renewcommand{\L}{\mathcal L}
\newcommand{\U}{\mathcal U}
\newcommand{\IB}{\mathbb B}
\newcommand{\IC}{\mathbb C}
\newcommand{\IN}{\mathbb N}
\newcommand{\IR}{\mathbb R}
\newcommand{\IZ}{\mathbb Z}
\newcommand{\Ad}{\mathrm{Ad}}
\newcommand{\cb}{\mathrm{cb}}
\newcommand{\cp}{\mathrm{cp}}
\newcommand{\dom}{\operatorname{dom}}
\newcommand{\id}{\mathrm{id}}
\newcommand{\abs}[1]{\lvert#1\rvert}
\newcommand{\norm}[1]{\lVert#1\rVert}
\theoremstyle{plain}
\newtheorem{proposition}{Proposition}[section]
\newtheorem{lemma}[proposition]{Lemma}
\newtheorem{theorem}[proposition]{Theorem}
\newtheorem{corollary}[proposition]{Corollary}
\theoremstyle{definition}
\newtheorem{definition}[proposition]{Definition}
\theoremstyle{remark}
\newtheorem{remark}[proposition]{Remark}
\newtheorem{example}[proposition]{Example}
\title[Logarithmic Sobolev Inequalities]{Exponential Relative Entropy Decay Along Quantum Markov Semigroups}
\author{Melchior Wirth}
\address{Institute of Science and Technology, Am Campus 1, 3400 Klosterneuburg, Austria}
\address{Leipzig University, Institute of Mathematics, Neues Augusteum, Augustusplatz 10, 04109 Leipzig, Germany}
\email{melchior.wirth@uni-leipzig.de}
\begin{document}
\begin{abstract}
    We establish the equivalence between exponential decay of the relative entropy along a quantum Markov semigroup and the modified logarithmic Sobolev inequality for general von Neumann algebras. We also extend an intertwining criterion for the modified logarithmic Sobolev inequality to GNS-symmetric quantum Markov semigroups on infinite-dimensional von Neumann algebras.
\end{abstract}

\maketitle

\section*{Introduction}

One of the key properties of the (quantum) relative entropy is the data processing inequality: The relative entropy between two (quantum) states does not increase after application of a (quantum) channel. In particular, the relative entropy between a state and an invariant state of the channel does not increase after iterated application of the channel.

It is of interest in classical probability and analysis as well as various disciplines of mathematical physics such as quantum information theory to obtain bounds that show a strict decrease of the relative entropy with respect to an invariant state for specific channels. Since the relative entropy is an upper bound for the trace distance (Pinsker's inequality), such decay estimates can be used for example to establish mixing time estimates.

In this article, we are concerned with a continuous-time version of this problem in which iterated application of a quantum channel is replaced by a continuous-parameter family of quantum channels that satisfy the semigroup property. Such families are called quantum Markov semigroups. The analog of decay bounds obtained from iterated application of a channel in the continuous-time case are decay bounds that are exponential in time. Such bounds are for example of interest in quantum information theory for their applications to rapid mixing \cite{KT13,Bar17}.

In the classical case \cite{BT03} or for quantum Markov semigroups acting on matrix algebras \cite{KT13,CM15}, it is well known that exponential decay of the relative entropy is equivalent to a logarithmic Sobolev-type inequality, called the modified logarithmic Sobolev inequality or $1$-log Sobolev inequality. If the quantum Markov semigroup has (positive) generator $\mathcal L_\ast$ and unique positive definite invariant state $\sigma$, the modified logarithmic Sobolev inequality takes the form
\begin{equation*}
    \beta \mathrm{tr}(\rho(\log\rho-\log\sigma))\leq \mathrm{tr}(\mathcal L_\ast(\rho)(\log\rho-\log \sigma)).
\end{equation*}
For classical diffusions, this modified logarithmic Sobolev inequality is equivalent to the logarithmic Sobolev inequality \cite{Gro75}, but this equivalence breaks down already for discrete spaces due to a lack of chain rule.

If one moves to infinite dimensions, a mathematically rigorous treatment of the equivalence between the modified logarithmic Sobolev inequality and exponential decay of relative entropy along the semigroup is still lacking (see the remarks in \cite[Section 3.2]{HKV17} for example). Let us briefly discuss the mathematical difficulties in the infinite-dimensional case. The key step in establishing the aforementioned equivalence is the deBruijn identity, which states that the right side of the modified logarithmic Sobolev inequality (called the entropy production or Fisher information) coincides with the negative derivative of the relative entropy along the semigroup.

In infinite dimensions, it is not true that all trajectories of the semigroup are differentiable and the relative entropy is not continuous, let alone differentiable. Moreover, the entropy production in the form $\mathrm{tr}(\L_\ast(\rho)(\log\rho-\log\sigma))$ has the problem that the argument of the trace has no definite sign so that the trace is in general ill-defined. Additionally, if one moves to normal states on a general von Neumann algebra, possibly type III, where normal states cannot be represented by density matrices, one is faced with the challenge of making sense of expressions like $\log\rho-\log\sigma$ in the first place.

In this article, we give a proof of the equivalence between the modified logarithmic Sobolev inequality and exponential relative decay of the relative entropy along a quantum Markov semigroup in a fairly general setting: The quantum Markov semigroup acts on the predual of an arbitrary von Neumann algebra and has a faithful normal invariant state (not necessarily unique). We do not assume that the semigroup has a bounded generator or a generator in generalized GKLS form, and we do not assume any detailed balance or symmetry condition.

Our approach relies on the Haagerup reduction method, which allows us to reduce the problem to finite von Neumann algebras, although the reduction is still technically challenging. To use this method, we first prove the deBruijn identity for finite von Neumann algebras (\Cref{thm:DeBruijn_semifinite}). We then apply the reduction method to the trajectories instead of the full semigroup, which allows us to avoid any compatibility assumptions with the modular group. In this way, we establish the deBruijn identity with an explicit expression of the entropy production in general von Neumann algebras (\Cref{thm:DeBruijn_general}) and then the main result of this article, the equivalence between the modified logarithmic Sobolev inequality and exponential decay of the relative entropy along the semigroup (\Cref{thm:main}).

In the last part, we show (\Cref{thm:intertwining}) how the intertwining approach to modified logarithmic Sobolev inequalities that originates in the work of Carlen and Maas for finite-dimensional von Neumann algebras \cite{CM17,CM20} can be extended to GNS-symmetric quantum Markov semigroups on general von Neumann algebras. As an application, we prove the modified logarithmic Sobolev inequality for quantum Markov semigroups that arise from certain cocycles on groups.

\subsection*{Acknowledgments}
The author is grateful for extended discussions with Martijn Caspers, Matthijs Vernooij and Haonan Zhang, in particular on the topic of the intertwining approach to modified logarithmic Sobolev inequalities. He is grateful to Marius Junge for pointing out the importance of the Haagerup reduction method to him, to Federico Girotti for raising the point of the qualitative long-time behavior of semigroups that satisfy a modified logarithmic Sobolev inequality, and to Stefan Hollands for finding an error in a previous version of the proof of \Cref{thm:main} and pointing the author to \cite{FHSW22} for a fix.

This research was partially funded by the Austrian Science Fund (FWF) under the Esprit Programme [ESP 156]. For the purpose of Open Access, the author has applied a CC BY public copyright licence to any Author Accepted Manuscript (AAM) version arising from this submission.

\section{The deBruijn identity in finite von Neumann algebras}

In this section, we establish the deBruijn identity in finite von Neumann algebras, that is, von Neumann algebras with a faithful normal trace $\tau$. Even in this case, convergence of the entropy production functional $\tau(\L_\ast(\rho)(\log \rho-\log\sigma))$ for general density operators $\rho$, $\sigma$ is problematic, hence we will focus on a class of normal states with good approximation properties for which the entropy production is well-defined.

Let us briefly recall the relevant definitions. Let $H$ be a Hilbert space and $\IB(H)$ the set of bounded linear operators on $H$. A \emph{von Neumann algebra} on $H$ is a subspace $M$ of $\IB(H)$ that contains the identity and is closed under multiplication, taking adjoints and closed in the weak operator topology. A bounded linear functional $\phi\colon M\to \IC$ is called
\begin{itemize}
    \item \emph{normal} if it is continuous with respect to the weak operator topology on the unit ball of $M$,
    \item a \emph{state} if $\phi(x^\ast x)\geq 0$ for all $x\in M$ and $\phi(1)=1$,
    \item \emph{faithful} if $\phi(x^\ast x)>0$ for all $x\in M$, $x\ne 0$,
    \item a (finite) \emph{trace} if $\phi(xy)=\phi(yx)$ for all $x,y\in M$.
\end{itemize}
The set of all normal linear functionals on $M$ is denoted by $M_\ast$.

A closed densely defined operator $x$ on $H$ with polar decomposition $x=u\abs{x}$ is called \emph{affiliated with $M$} if $u\in M$ and all spectral projections of $\abs{x}$ belong to $M$. If $x$ is a positive self-adjoint operator affiliated with $M$ with spectral measure $e_x$ and $\tau$ is a finite normal trace on $M$, then $\tau\circ e_x$ is a measure on the spectrum of $x$ and one can define
\begin{equation*}
    \tau(x)=\int_{[0,\infty)}\lambda\,d(\tau\circ e_x)(\lambda)\in [0,\infty].
\end{equation*}
If $\tau$ is faithful and $1\leq p<\infty$, the noncommutative Lebesgue space $L^p(M,\tau)$ is the space of all closed densely defined operators $x$ affiliated with $M$ such that $\tau(\abs{x}^p)<\infty$. The norm on $L^p(M,\tau)$ is defined by $\norm{x}_p=\tau(\abs{x}^p)^{1/p}$. We write $L^p_+(M,\tau)$ for the cone of positive self-adjoint operators in $L^p(M,\tau)$.

If $M$ admits a faithful normal tracial state, the sum and product of two affiliated operators is closable and the closure is again an affiliated operator. These are called the strong sum and strong product, and we will simply write $x+y$ and $xy$ for the strong operations. The norm on $L^2(M,\tau)$ is induced by an inner product and the map
\begin{equation*}
    \pi_\tau\colon M\to \IB(L^2(M,\tau)),\,\pi_\tau(x)y=xy
\end{equation*}
is a faithful unital $\ast$-representation, called the GNS representation associated with $\tau$. In particular, one can always assume that $H=L^2(M,\tau)$, which we will do in the following.

For $\sigma\in L^1_+(M,\tau)$ and $\alpha\geq 1$ let $B_\alpha(\sigma)=\{\rho\in L^1_+(M,\tau)\mid \alpha^{-1}\sigma\leq \rho\leq \alpha \sigma\}$ and $B(\sigma)=\bigcup_{\alpha\geq 1}B_\alpha(\sigma)$. We say that a self-adjoint operator $x$ on $L^2(M,\tau)$ is \emph{non-singular} if $\1_{\IR\setminus \{0\}}(x)=1$. If $x$ is a non-singular positive self-adjoint operator, then $\log \rho$ can be defined using functional calculus (extending $\log$ to $[0,\infty)$ in an arbitrary way).

%

\begin{lemma}\label{lem:diff_log_bounded}
If $\rho,\sigma\in L^1_+(M,\tau)$ are non-singular and $\rho\in B_\alpha(\sigma)$, then $\abs{\log\rho-\log\sigma}\leq\log\alpha$. In particular, $\log\rho-\log\sigma\in M$.
\end{lemma}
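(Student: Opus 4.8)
The plan is to reduce to the case where $\rho$ and $\sigma$ are positive and invertible elements of $M$, in which the estimate is a one-line consequence of operator monotonicity of the logarithm, and then to transfer the bound back by an approximation argument in the measure topology. First, since $\rho\in B_\alpha(\sigma)$ is equivalent to $\sigma\in B_\alpha(\rho)$ and the asserted estimate is symmetric in $\rho,\sigma$, it suffices to prove $\log\rho-\log\sigma\le(\log\alpha)\1$ using only $\rho\le\alpha\sigma$; applying this also to the pair $(\sigma,\rho)$ gives $-(\log\alpha)\1\le\log\rho-\log\sigma$, hence $\abs{\log\rho-\log\sigma}\le\log\alpha$, which in particular forces $\log\rho-\log\sigma\in M$. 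If $\rho$ and $\sigma$ are positive and invertible in $M$, then $\rho\le\alpha\sigma$ gives $\log\rho\le\log(\alpha\sigma)$ by operator monotonicity of the logarithm on $(0,\infty)$, and $\log(\alpha\sigma)=(\log\alpha)\1+\log\sigma$ because $\log\alpha$ is a central scalar, so the desired inequality holds.

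For general $\rho,\sigma\in L^1_+(M,\tau)$ these operators are in general unbounded and not bounded away from $0$, so $\log\rho$ and $\log\sigma$ are unbounded operators affiliated with $M$ and $\log\rho-\log\sigma$ is a priori only their strong difference, a self-adjoint $\tau$-measurable operator. To regularize, I would set $\rho_n=f_n(\rho+n^{-1}\1)$ and $\sigma_n=f_n(\sigma+n^{-1}\1)$ with $f_n(t)=nt/(n+t)$: these are positive invertible elements of $M$ (bounded above by $n$ and below by $f_n(n^{-1})>0$). The key point is that the regularization keeps the one-sided estimate with the \emph{same} constant $\alpha$: since $\alpha\ge1$, $\rho\le\alpha\sigma$ implies $\rho+n^{-1}\1\le\alpha(\sigma+n^{-1}\1)$; $f_n$ is operator monotone, so $\rho_n\le f_n(\alpha(\sigma+n^{-1}\1))$; and $f_n(\alpha t)\le\alpha f_n(t)$ for all $t\ge0$ (an elementary scalar inequality, using $\alpha\ge1$), so $f_n(\alpha(\sigma+n^{-1}\1))\le\alpha\sigma_n$. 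Hence $\rho_n\le\alpha\sigma_n$, and the invertible case yields $\log\rho_n-\log\sigma_n\le(\log\alpha)\1$ for every $n$.

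It remains to pass to the limit in the measure topology on the $\ast$-algebra $L^0(M,\tau)$ of $\tau$-measurable operators. Using $\tau(\1)<\infty$ and $\tau(\rho)<\infty$ one checks that $\rho+n^{-1}\1\to\rho$ and $f_n(\rho+n^{-1}\1)-(\rho+n^{-1}\1)=-(\rho+n^{-1}\1)^2(n\1+\rho+n^{-1}\1)^{-1}\to0$ in measure, so $\rho_n\to\rho$ and likewise $\sigma_n\to\sigma$; by continuity of the continuous functional calculus in the measure topology — the non-singularity of $\rho,\sigma$ making the choice of extension of $\log$ to $0$ irrelevant — we get $\log\rho_n-\log\sigma_n\to\log\rho-\log\sigma$ in measure. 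Since the set $\{T\in L^0(M,\tau): T=T^\ast,\ T\le(\log\alpha)\1\}$ is closed in the measure topology (being the zero set of the measure-continuous map $T\mapsto(T-(\log\alpha)\1)_+$), the limit satisfies $\log\rho-\log\sigma\le(\log\alpha)\1$, and the proof finishes as in the first paragraph. The algebraic content — operator monotonicity of the logarithm together with the centrality of $\log\alpha$ — is elementary; the real work, and the main obstacle, is the infinite-dimensional bookkeeping: giving $\log\rho-\log\sigma$ a meaning as an affiliated operator and ensuring the uniform bound persists in a limit whose regularizations have lower bounds degenerating to $0$. (One could instead invoke operator monotonicity of $\log$ directly for non-singular positive operators via $\log x=\int_0^\infty\big(\tfrac{1}{1+s}\1-(x+s)^{-1}\big)\,ds$, but then one must verify convergence of the resulting operator integrals, and the two-sided hypothesis $\alpha^{-1}\sigma\le\rho\le\alpha\sigma$ is exactly what makes that work.)
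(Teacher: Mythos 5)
Your proof is correct and rests on the same key fact as the paper's one-line argument: operator monotonicity of the logarithm applied to $\alpha^{-1}\sigma\leq\rho\leq\alpha\sigma$, together with $\log(\alpha\sigma)=\log\alpha+\log\sigma$. The paper applies this directly to the non-singular affiliated operators; you instead regularize to positive invertible elements of $M$ via $f_n(t)=nt/(n+t)$, prove the bound there, and pass to the limit in the measure topology --- a more explicit way of justifying the same step, and your observation that $f_n(\alpha t)\leq\alpha f_n(t)$ for $\alpha\geq 1$ (so the constant survives regularization) is exactly the right trick. One justification should be rephrased: continuity of the functional calculus in the measure topology is a theorem for functions continuous on all of $\mathbb{R}$, which $\log$ (however extended at $0$) is not, so $\log\rho_n\to\log\rho$ does not follow from a black-box two-variable continuity statement, and your parenthetical about non-singularity does not by itself repair that. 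It is repaired by noting that all your regularizations are functions of the \emph{single} operator $\rho$: writing $\log\rho_n=h_n(\rho)$ with $h_n(t)=\log f_n(t+n^{-1})\to\log t$ pointwise on $(0,\infty)$, non-singularity of $\rho$ and dominated convergence for the finite measure $\tau\circ e_\rho$ give $\log\rho_n\to\log\rho$ in measure directly, and likewise for $\sigma$; with that substitution the argument closes.
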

\begin{proof}
As the logarithm is operator monotone, we have
\begin{equation*}
-\log \alpha=\log(\alpha^{-1}\sigma)-\log \sigma\leq \log\rho-\log\sigma\leq \log(\alpha\sigma)-\log\sigma=\log\alpha.\qedhere
\end{equation*}
\end{proof}

\begin{lemma}\label{lem:integral_formula_entropy}
If $x$ is a non-singular operator  positive self-adjoint affiliated with $M$ and $\xi,\eta\in \dom(\log x)$, then
\begin{equation*}
\langle \xi,(\log x)\eta\rangle=\lim_{n\to\infty}\int_{1/n}^n \langle \xi,((1+\lambda)^{-1}-(x+\lambda)^{-1})\eta\rangle\,d\lambda
\end{equation*}
\end{lemma}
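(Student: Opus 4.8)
The plan is to deduce the operator identity from its scalar counterpart by means of the spectral theorem. For $t>0$ and $n\in\IN$, an elementary computation gives
\begin{equation*}
g_n(t):=\int_{1/n}^n\Bigl(\frac{1}{1+\lambda}-\frac{1}{t+\lambda}\Bigr)\,d\lambda=\log\frac{1+n}{t+n}+\log\frac{t+1/n}{1+1/n},
\end{equation*}
so that $g_n(t)\to\log t$ as $n\to\infty$ for every $t>0$. Moreover, for $t\ge 1$ each of the two fractions $\tfrac{1+n}{t+n}$ and $\tfrac{t+1/n}{1+1/n}$ lies between $1/t$ (resp.\ $1$) and $1$ (resp.\ $t$), and symmetrically for $t\le 1$; hence one gets the $n$-independent bound $\abs{g_n(t)}\le 2\abs{\log t}$ for all $t>0$, which will serve as dominating function below.

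Next I would invoke the spectral theorem. Let $e_x$ denote the spectral measure of $x$ and set $\mu:=\langle\xi,e_x(\cdot)\eta\rangle$, a complex Borel measure on $[0,\infty)$ whose total variation is at most $\norm\xi\norm\eta$; since $x$ is non-singular, $\mu$ is carried by $(0,\infty)$. For fixed $\lambda>0$ the operator $(1+\lambda)^{-1}-(x+\lambda)^{-1}$ is bounded and functional calculus gives $\langle\xi,((1+\lambda)^{-1}-(x+\lambda)^{-1})\eta\rangle=\int_{(0,\infty)}\bigl(\tfrac{1}{1+\lambda}-\tfrac{1}{t+\lambda}\bigr)\,d\mu(t)$; in particular $\lambda\mapsto\langle\xi,((1+\lambda)^{-1}-(x+\lambda)^{-1})\eta\rangle$ is continuous. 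For fixed $n$ the function $(\lambda,t)\mapsto\tfrac{1}{1+\lambda}-\tfrac{1}{t+\lambda}$ is bounded (by $n+1$) on $[1/n,n]\times(0,\infty)$ and both factors of the product measure are finite, so Fubini's theorem applies and yields
\begin{equation*}
\int_{1/n}^n\langle\xi,((1+\lambda)^{-1}-(x+\lambda)^{-1})\eta\rangle\,d\lambda=\int_{(0,\infty)}g_n(t)\,d\mu(t).
\end{equation*}

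It then remains to pass to the limit $n\to\infty$ under the integral sign, and this is where the hypothesis $\xi,\eta\in\dom(\log x)$ enters. Writing $\mu_\xi:=\langle\xi,e_x(\cdot)\xi\rangle$ and $\mu_\eta:=\langle\eta,e_x(\cdot)\eta\rangle$, the Cauchy--Schwarz inequality for spectral measures gives $\int_{(0,\infty)}\abs{\log t}\,d\abs{\mu}(t)\le\bigl(\int_{(0,\infty)}\abs{\log t}\,d\mu_\xi(t)\bigr)^{1/2}\bigl(\int_{(0,\infty)}\abs{\log t}\,d\mu_\eta(t)\bigr)^{1/2}$, and since $\mu_\xi$ is finite and $\xi\in\dom(\log x)$,
\begin{equation*}
\int_{(0,\infty)}\abs{\log t}\,d\mu_\xi(t)\le\norm\xi\Bigl(\int_{(0,\infty)}\abs{\log t}^2\,d\mu_\xi(t)\Bigr)^{1/2}<\infty,
\end{equation*}
and likewise for $\eta$. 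Hence $\abs{\log t}\in L^1(\abs{\mu})$, so the bound $\abs{g_n}\le 2\abs{\log t}$ lets us apply the dominated convergence theorem, which gives $\lim_{n\to\infty}\int_{(0,\infty)}g_n(t)\,d\mu(t)=\int_{(0,\infty)}\log t\,d\mu(t)=\langle\xi,(\log x)\eta\rangle$, the last equality being the spectral representation of $\log x$ (absolutely convergent by the same integrability statement). Combining with the displayed Fubini identity proves the claim.

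The only genuinely delicate point is this integrability: one must control the natural dominating function $\abs{\log t}$ against the \emph{complex} measure $\mu$, not merely against the positive measures $\mu_\xi,\mu_\eta$. This is exactly why one needs both $\xi$ and $\eta$ (not just one of them) in $\dom(\log x)$, together with finiteness of $\mu_\xi$ and $\mu_\eta$ to upgrade square-integrability to integrability; everything else is routine manipulation with the spectral calculus.
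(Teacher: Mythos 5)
Your proof is correct and takes essentially the same approach as the paper's: the scalar identity $\log t=\int_0^\infty((1+\lambda)^{-1}-(t+\lambda)^{-1})\,d\lambda$, the spectral theorem, Fubini, and dominated convergence, with integrability of $\abs{\log t}$ deduced from $\xi,\eta\in\dom(\log x)$ together with finiteness of the spectral measures via Cauchy--Schwarz. The only cosmetic difference is that the paper polarizes to reduce to the case $\xi=\eta$ and works with the positive measure $\mu_\xi$, whereas you work directly with the complex measure $\langle\xi,e_x(\cdot)\eta\rangle$ and bound its total variation by a Cauchy--Schwarz inequality for spectral measures; both are fine.
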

\begin{proof}
Note that
\begin{equation*}
\log t=\int_0^\infty\left(\frac 1 {1+\lambda}-\frac 1{t+\lambda}\right)\,d\lambda
\end{equation*}
for $t>0$.

We can assume that $\xi=\eta$; the general case follows by polarization. Let $\mu_\xi$ denote the spectral measure of $x$ with respect to $\xi$. By the spectral theorem,
\begin{align}
\begin{split}\label{eq:quadr_form_log}
\langle\xi,(\log x)\xi\rangle&=\int_0^\infty \log t\,d\mu_\xi(t)\\
&=\int_0^\infty\int_0^\infty\left(\frac 1{1+\lambda}-\frac 1{t+\lambda}\right)\,d\lambda\,d\mu_\xi(t).
\end{split}
\end{align}
Since
\begin{align*}
&\int_0^\infty\int_0^\infty \left\lvert\frac 1{1+\lambda}-\frac 1{t+\lambda}\right\rvert\,d\lambda\,d\mu_\xi(t)\\
&\qquad=\int_0^\infty (\1_{[1,\infty)}(t)\log t-\1_{(0,1)}(t)\log t)\,d\mu_\xi(t)\\
&\qquad=\int_0^\infty\abs{\log t}\,d\mu_\xi(t)\\
&\qquad\leq \left(\int_0^\infty(\log t)^2\,d\mu_\xi(t)\right)^{1/2}\\
&\qquad<\infty,
\end{align*}
we can apply Fubini's theorem to \eqref{eq:quadr_form_log} to obtain
\begin{align*}
\langle\xi,(\log x)\xi\rangle&=\int_0^\infty\int_0^\infty \left(\frac 1{1+\lambda}-\frac 1{t+\lambda}\right)\,d\mu_\xi(t)\,d\lambda\\
&=\int_0^\infty \langle\xi,((1+\lambda)^{-1}-(x+\lambda)^{-1})\xi\rangle\,d\lambda,
\end{align*}
where the last integral converges absolutely. Hence
\begin{equation*}
\langle \xi,(\log x)\xi\rangle=\lim_{n\to\infty}\int_{1/n}^n \langle \xi,((1+\lambda)^{-1}-(x+\lambda)^{-1})\xi\rangle\,d\lambda
\end{equation*}
by the dominated convergence theorem.
\end{proof}

\begin{lemma}\label{lem:approx_entropy}
If $\sigma\in L^1_+(M,\tau)$ is non-singular, $\alpha\geq 1$ and $\rho\in B_\alpha(\sigma)$, then
\begin{equation*}
\left\lVert \int_{1/n}^n ((\sigma+\lambda)^{-1}-(\rho+\lambda)^{-1})\,d\lambda\right\rVert\leq \log\alpha
\end{equation*}
for all $n\in\mathbb N$ and
\begin{equation*}
\lim_{n\to\infty}\int_{1/n}^n ((\sigma+\lambda)^{-1}-(\rho+\lambda)^{-1}))\,d\lambda\to\log\rho-\log\sigma
\end{equation*}
in the weak$^\ast$ topology on $M$.

In particular,
\begin{equation*}
\lim_{n\to\infty}\int_{1/n}^n\tau(\rho ((\sigma+\lambda)^{-1}-(\rho+\lambda)^{-1})))\,d\lambda\to\tau(\rho(\log\rho-\log\sigma)).
\end{equation*}
\end{lemma}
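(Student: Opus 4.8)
The plan is to prove the three assertions in turn: first the uniform norm bound, then the weak$^\ast$ convergence (using the norm bound to reduce testing to a dense subspace), and finally the trace identity as an immediate consequence.

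For the norm bound, set $g_n(x)=\int_{1/n}^n(x+\lambda)^{-1}\,d\lambda$ for a positive self-adjoint operator $x$ affiliated with $M$; since $\lambda\mapsto(x+\lambda)^{-1}$ is a norm-continuous bounded $M$-valued function on $[1/n,n]$, we have $g_n(x)\in M$, and the operator in question is the self-adjoint element $g_n(\sigma)-g_n(\rho)$. From $\alpha^{-1}\sigma\le\rho\le\alpha\sigma$ we obtain, for every $\lambda>0$, the inequalities $\alpha^{-1}\sigma+\lambda\le\rho+\lambda\le\alpha\sigma+\lambda$ between operators bounded below by $\lambda$, so antitonicity of inversion gives $(\alpha\sigma+\lambda)^{-1}\le(\rho+\lambda)^{-1}\le(\alpha^{-1}\sigma+\lambda)^{-1}$; integrating over $[1/n,n]$ yields $g_n(\alpha\sigma)\le g_n(\rho)\le g_n(\alpha^{-1}\sigma)$, hence
\begin{equation*}
g_n(\sigma)-g_n(\alpha^{-1}\sigma)\le g_n(\sigma)-g_n(\rho)\le g_n(\sigma)-g_n(\alpha\sigma).
\end{equation*}
The outer terms are functions of $\sigma$ alone, so by the spectral theorem it suffices to check that $\abs{g_n(t)-g_n(\alpha^{\pm1}t)}\le\log\alpha$ for all $t\ge0$. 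The substitution $\lambda\mapsto\alpha\lambda$ in $g_n(\alpha t)$ gives $g_n(t)-g_n(\alpha t)=\log\frac{t+n}{t+n/\alpha}-\log\frac{t+1/n}{t+1/(n\alpha)}$, and each of the two ratios lies in $[1,\alpha]$, so the difference lies in $[-\log\alpha,\log\alpha]$; the bound for $g_n(\alpha^{-1}t)$ is obtained the same way. Hence $\norm{g_n(\sigma)-g_n(\rho)}\le\log\alpha$ for all $n$.

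For the weak$^\ast$ convergence, since $(g_n(\sigma)-g_n(\rho))_n$ stays in the ball of radius $\log\alpha$ of $M$, on which the weak$^\ast$ topology agrees with the weak operator topology, it suffices---by an $\epsilon/3$-argument and polarization---to show $\langle\xi,(g_n(\sigma)-g_n(\rho))\xi\rangle\to\langle\xi,(\log\rho-\log\sigma)\xi\rangle$ for $\xi$ in a dense subspace. I would use $\bigcup_{\epsilon>0}e_\epsilon L^2(M,\tau)$ with $e_\epsilon=\1_{[\epsilon,1/\epsilon]}(\sigma)$, which is dense because $\sigma$ is non-singular and is contained in $\dom(\sigma^{1/2})\cap\dom(\sigma^{-1/2})$. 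The form inequality $\rho\le\alpha\sigma$ gives $\dom(\sigma^{1/2})\subseteq\dom(\rho^{1/2})$; moreover $\alpha^{-1}\sigma\le\rho$ together with non-singularity of $\sigma$ forces $\ker\rho^{1/2}=0$, so $\rho$ is non-singular, and antitonicity of inversion turns $\alpha^{-1}\sigma\le\rho$ into $\rho^{-1}\le\alpha\sigma^{-1}$, whence $\dom(\sigma^{-1/2})\subseteq\dom(\rho^{-1/2})$. Since $(\log t)^2\le C(t+t^{-1})$ on $(0,\infty)$ for a suitable constant $C$, every $\xi$ in the chosen subspace then lies in $\dom(\log\rho)\cap\dom(\log\sigma)$. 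For such $\xi$, writing $(\sigma+\lambda)^{-1}-(\rho+\lambda)^{-1}=((1+\lambda)^{-1}-(\rho+\lambda)^{-1})-((1+\lambda)^{-1}-(\sigma+\lambda)^{-1})$ and applying \Cref{lem:integral_formula_entropy} to $\rho$ and to $\sigma$ gives $\langle\xi,(g_n(\sigma)-g_n(\rho))\xi\rangle\to\langle\xi,(\log\rho)\xi\rangle-\langle\xi,(\log\sigma)\xi\rangle$, which by \Cref{lem:diff_log_bounded} equals $\langle\xi,(\log\rho-\log\sigma)\xi\rangle$, as $\log\rho-\log\sigma$ is precisely the bounded operator whose quadratic form on the common domain of $\log\rho$ and $\log\sigma$ is $\xi\mapsto\langle\xi,(\log\rho)\xi\rangle-\langle\xi,(\log\sigma)\xi\rangle$.

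The last assertion is then immediate: $\rho\in L^1_+(M,\tau)$ defines a normal functional $y\mapsto\tau(\rho y)$ on $M$, and since $\lambda\mapsto(\sigma+\lambda)^{-1}-(\rho+\lambda)^{-1}$ is a bounded weakly measurable $M$-valued function on $[1/n,n]$, this functional commutes with the integral, so $\int_{1/n}^n\tau(\rho((\sigma+\lambda)^{-1}-(\rho+\lambda)^{-1}))\,d\lambda=\tau(\rho(g_n(\sigma)-g_n(\rho)))\to\tau(\rho(\log\rho-\log\sigma))$ by the weak$^\ast$ convergence above. I expect the main obstacle to be the domain bookkeeping in the third paragraph---producing a dense set of vectors in $\dom(\log\rho)\cap\dom(\log\sigma)$ on which \Cref{lem:integral_formula_entropy} can be applied to $\rho$ and $\sigma$ at once, and matching the resulting form limit with the bounded operator $\log\rho-\log\sigma$; the norm bound, while it proceeds through the sandwich rather than a direct estimate, is routine once one has reduced to functions of $\sigma$.
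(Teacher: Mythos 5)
Your proof is correct, and its overall skeleton matches the paper's: a uniform norm bound obtained by sandwiching between functions of $\sigma$ via operator antitonicity of $x\mapsto(x+\lambda)^{-1}$, weak convergence of the quadratic forms on a dense set of vectors in $\dom(\log\rho)\cap\dom(\log\sigma)$ via \Cref{lem:integral_formula_entropy}, and the uniform bound to upgrade this to weak$^\ast$ convergence and to the trace statement. Two steps are executed differently. For the norm bound, the paper also sandwiches $x_n$ between $g_n(\sigma)-g_n(\alpha^{\pm1}\sigma)$ but then extends the integral to $(0,\infty)$ and invokes \Cref{lem:integral_formula_entropy} to identify the majorant as $\langle\xi,(\log(\alpha\sigma)-\log\sigma)\xi\rangle=(\log\alpha)\norm{\xi}^2$ on $\dom(\log\sigma)$; your substitution $\lambda\mapsto\alpha\lambda$ turns the same sandwich into a purely scalar functional-calculus estimate valid for every finite $n$ without appealing to the integral formula, which is more elementary and self-contained. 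For the dense domain, the paper cites the general fact that two self-adjoint operators affiliated with a finite von Neumann algebra have a dense common domain \cite[Proposition 24]{Ter81}, whereas you construct the explicit dense subspace $\bigcup_{\epsilon>0}\1_{[\epsilon,1/\epsilon]}(\sigma)L^2(M,\tau)$ and transfer it into $\dom(\log\rho)$ through the form inequalities $\rho\leq\alpha\sigma$ and $\rho^{-1}\leq\alpha\sigma^{-1}$ together with $(\log t)^2\leq C(t+t^{-1})$; this costs a little domain bookkeeping (all of which you do correctly, including the observation that $\rho$ is automatically non-singular) but removes the external citation. Both versions are complete proofs; yours trades one lemma application and one reference for short explicit computations.
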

\begin{proof}
First note that since the resolvent is a norm continuous function, the integral 
\begin{equation*}
x_n=\int_{1/n}^n ((\sigma+\lambda)^{-1}-(\rho+\lambda)^{-1})\,d\lambda
\end{equation*}
converges in norm in $M$. Moreover, $\log\rho-\log\sigma$ belongs to $M$ by Lemma \ref{lem:diff_log_bounded}.

Since the inverse is operator monotone decreasing, if $\xi\in \dom(\log \sigma)$, we have
\begin{align*}
\langle\xi,x_n \xi\rangle&\leq \int_{1/n}^n \langle \xi,((\sigma+\lambda)^{-1}-(\alpha\sigma+\lambda)^{-1})\xi\rangle\,d\lambda\\
&\leq \int_0^\infty\langle \xi,((\sigma+\lambda)^{-1}-(\alpha\sigma+\lambda)^{-1})\xi\rangle\,d\lambda\\
&=\langle \xi,(\log(\alpha \sigma)-\log\sigma)\xi\rangle\\
&=(\log\alpha)\norm{\xi}^2,
\end{align*}
where the second-to-last equality follows from Lemma \ref{lem:integral_formula_entropy}. Similarly one can show $\langle\xi,x_n\xi\rangle\geq -(\log \alpha)\norm{\xi}^2$. Thus $\norm{x_n}\leq \log\alpha$.

Moreover, by Lemma \ref{lem:integral_formula_entropy},
\begin{equation*}
\langle\xi,x_n\eta\rangle\to \langle\xi,(\log\rho-\log\sigma)\eta\rangle
\end{equation*}
for $\xi,\eta\in \dom(\log\rho)\cap \dom(\log\sigma)$.

As $\rho$, $\sigma$ are self-adjoint operators affiliated with $M$, so are $\log\rho$ and $\log \sigma$. Thus the intersection $\dom(\log\rho)\cap \dom(\log\sigma)$ is dense in $L^2(M,\tau)$ \cite[Proposition 24]{Ter81}. Therefore, $x_n\to \log\rho-\log \sigma$ weakly, which is the same as weak$^\ast$ convergence on norm bounded sets. In particular,
\begin{equation*}
\langle \rho^{1/2},x_n\rho^{1/2}\rangle\to \langle \rho^{1/2},(\log\rho-\log\sigma)\rho^{1/2}\rangle.\qedhere
\end{equation*}
\end{proof}

\begin{lemma}\label{lem:deriv_approx_entropy}
Let $\sigma\in L^1_+(M,\tau)$ have full support and let $(\rho_t)_{t\geq 0}$ be continuously differentiable curve in $L^1(M,\tau)$ for which there exists $\alpha\geq 1$ such that $\rho_t\in B_\alpha(\sigma)$ for all $t\geq 0$. The function
\begin{equation*}
f_n\colon [0,\infty)\to \IR,\,t\mapsto\int_{1/n}^n \tau(\rho_t((\sigma+\lambda)^{-1}-(\rho_t+\lambda)^{-1}))\,d\lambda
\end{equation*}
is differentiable with derivative
\begin{equation*}
f_n'(t)=\int_{1/n}^n \tau(\dot\rho_t((\sigma+\lambda)^{-1}-(\rho_t+\lambda)^{-1}+\rho_t(\rho_t+\lambda)^{-2}))\,d\lambda.
\end{equation*}
\end{lemma}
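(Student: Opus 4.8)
The plan is to differentiate under the integral sign, so the main work is to justify the interchange of $\frac{d}{dt}$ with $\int_{1/n}^n \cdots\, d\lambda$ and to compute the pointwise $\lambda$-derivative of the integrand. Fix $n$ and write $g(t,\lambda)=\tau(\rho_t((\sigma+\lambda)^{-1}-(\rho_t+\lambda)^{-1}))$. For each fixed $\lambda\in[1/n,n]$, I would first verify that $t\mapsto g(t,\lambda)$ is differentiable: the map $t\mapsto\rho_t$ is $C^1$ into $L^1(M,\tau)$ by hypothesis, the resolvent map $\rho\mapsto(\rho+\lambda)^{-1}$ is norm-differentiable on the set of positive operators with derivative $h\mapsto -(\rho+\lambda)^{-1}h(\rho+\lambda)^{-1}$ (standard resolvent identity, valid since $\lambda>0$ bounds the resolvents by $\lambda^{-1}$), and $\tau$ pairs $L^1$ with $M$ continuously. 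Using the product rule on the two terms $\tau(\rho_t(\sigma+\lambda)^{-1})$ and $\tau(\rho_t(\rho_t+\lambda)^{-1})$, the $\rho_t$-factor contributes $\tau(\dot\rho_t((\sigma+\lambda)^{-1}-(\rho_t+\lambda)^{-1}))$, while differentiating the resolvent in the second term contributes $+\tau(\rho_t(\rho_t+\lambda)^{-1}\dot\rho_t(\rho_t+\lambda)^{-1})$; by the trace property this last term equals $\tau(\dot\rho_t(\rho_t+\lambda)^{-2}\rho_t)=\tau(\dot\rho_t\,\rho_t(\rho_t+\lambda)^{-2})$ since $\rho_t$ commutes with its own resolvent. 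Collecting terms gives exactly the claimed integrand $\tau(\dot\rho_t((\sigma+\lambda)^{-1}-(\rho_t+\lambda)^{-1}+\rho_t(\rho_t+\lambda)^{-2}))$.

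The second ingredient is a dominating function so that differentiation under the integral applies on the compact interval $[1/n,n]$ uniformly for $t$ in a neighborhood of any fixed point. I would bound the operator-norm of each piece of the integrand: $\norm{(\sigma+\lambda)^{-1}}\leq\lambda^{-1}\leq n$, likewise $\norm{(\rho_t+\lambda)^{-1}}\leq n$, and $\norm{\rho_t(\rho_t+\lambda)^{-2}}\leq\sup_{s\geq 0}s(s+\lambda)^{-2}\leq(4\lambda)^{-1}\leq n/4$. Since $\rho_t\in B_\alpha(\sigma)$ for all $t$, the curve $t\mapsto\rho_t$ is bounded in $L^1$ and, because it is $C^1$, $\sup_{t\in[a,b]}\norm{\dot\rho_t}_1<\infty$ on any compact $[a,b]$. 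Hence $\abs{\partial_t g(t,\lambda)}\leq C_n\norm{\dot\rho_t}_1\leq C_n'$ uniformly for $(t,\lambda)\in[a,b]\times[1/n,n]$, which is integrable over the finite interval $[1/n,n]$. The standard theorem on differentiating parameter integrals then yields that $f_n$ is differentiable on $[a,b]$ with the stated derivative; since $[a,b]$ was arbitrary, this holds on all of $[0,\infty)$.

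The only mildly delicate point—and the step I expect to require the most care—is the differentiability of the resolvent map $t\mapsto(\rho_t+\lambda)^{-1}$ as a map into $M$ (not merely into the affiliated operators), together with the fact that its derivative has the form $-(\rho_t+\lambda)^{-1}\dot\rho_t(\rho_t+\lambda)^{-1}$ when $\dot\rho_t$ is only an $L^1$-limit of difference quotients. Here one uses the second resolvent identity $(\rho_{t+h}+\lambda)^{-1}-(\rho_t+\lambda)^{-1}=-(\rho_{t+h}+\lambda)^{-1}(\rho_{t+h}-\rho_t)(\rho_t+\lambda)^{-1}$ and the uniform bound $\norm{(\rho_s+\lambda)^{-1}}\leq\lambda^{-1}$ to pass to the limit in $L^1$-norm after pairing against $\rho_t$ (or $\dot\rho_t$) via $\tau$; no norm-differentiability in $M$ is actually needed, only differentiability of the scalar function $t\mapsto\tau(a(\rho_t+\lambda)^{-1})$ for bounded $a$, which follows from this identity and the Hölder-type estimate $\abs{\tau(a(\rho_{t+h}+\lambda)^{-1}(\rho_{t+h}-\rho_t)(\rho_t+\lambda)^{-1})}\leq\norm{a}\lambda^{-2}\norm{\rho_{t+h}-\rho_t}_1$. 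Everything else is a routine bookkeeping of the product rule and the trace property.
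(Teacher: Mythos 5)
Your proposal is correct and follows essentially the same route as the paper: the paper's splitting of the difference quotient $f_n(t)-f_n(s)$ into a term paired with $\rho_t-\rho_s$ and a term handled by the second resolvent identity is exactly your product rule, and its dominated convergence over $\lambda\in[1/n,n]$ with the bounds $\norm{(\rho_s+\lambda)^{-1}}\leq\lambda^{-1}$ is your justification for differentiating under the integral sign. Your closing remark that only differentiability of the scalar pairing (not norm-differentiability of the resolvent in $M$) is needed matches how the paper handles the same delicate point.
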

\begin{proof}
Let $\alpha\geq 1$ such that $\rho_t\in B_\alpha(\rho)$ for all $t\geq 0$. We have
\begin{equation*}
(\rho_t+\lambda)^{-1}\rho(\rho_t+\lambda)^{-1}\leq \alpha \rho_t(\rho_t+\lambda)^{-2}\leq \frac{\alpha}{\lambda}.
\end{equation*}
As the resolvent is norm continuous, the integrand in the expression for $f_n^\prime(t)$ depends continuously on $\lambda$. Thus the integral is well-defined.

If $s,t\geq 0$, then
\begin{align*}
f_n(t)-f_n(s)&=\int_{1/n}^n \tau((\rho_t-\rho_s)((\sigma+\lambda)^{-1}-(\rho_t+\lambda)^{-1}))\,d\lambda\\
&\quad+\int_{1/n}^n\tau(\rho_s((\rho_t+\lambda)^{-1}-(\rho_s+\lambda)^{-1}))\,d\lambda\\
\begin{split}\label{eq:test}
&=\tau\left((\rho_t-\rho_s)\int_{1/n}^n ((\sigma+\lambda)^{-1}-(\rho_t+\lambda)^{-1})\,d\lambda\right)\\
&\quad+\int_{1/n}^n\tau((\rho_s+\lambda)^{-1}\rho_s(\rho_t+\lambda)^{-1}(\rho_s-\rho_t))\,d\lambda
\end{split}
\end{align*}
For the first integral we have
\begin{equation*}
\int_{1/n}^n ((\sigma+\lambda)^{-1}-(\rho_t+\lambda)^{-1})\,d\lambda\in M
\end{equation*}
and 
\begin{equation*}
\lim_{s\to t}\frac{\rho_t-\rho_s}{t-s}=\dot\rho_t
\end{equation*}
in $L^1(M,\tau)$. Hence
\begin{align*}
&\tau\left(\frac{\rho_t-\rho_s}{t-s}\int_{1/n}^n ((\sigma+\lambda)^{-1}-(\rho_t+\lambda)^{-1})\,d\lambda\right)\\
&\qquad\to \int_{1/n}^n \tau(\dot\rho_t((\sigma+\lambda)^{-1}-(\rho_t+\lambda)^{-1}))\,d\lambda.
\end{align*}

We want to apply the dominated convergence theorem to the second integral. We have
\begin{align*}
\norm{\rho_t-\rho_s}_1\leq \abs{t-s}\sup_{r\in [s,t]}\norm{\dot\rho_r}_1
\end{align*}
and
\begin{align*}
\norm{(\rho_s+\lambda)^{-1}\rho_s(\rho_t+\lambda)^{-1}}&\leq\norm{(\rho_s+\lambda)^{-1}\rho_s}\norm{(\rho_t+\lambda)^{-1}}\leq \frac 1{\lambda}.
\end{align*}
Furthermore, in the limit $s\to t$ we have $(\rho_t-\rho_s)/(t-s)\to \dot\rho_t$ in $L^1(M,\tau)$ and $(\rho_s+\lambda)^{-1}\rho_s(\rho_t+\lambda)^{-1}\to \rho_t(\rho_t+\lambda)^{-2}$ strongly in $M$ by the strong resolvent continuity of the curve $(\rho_s)$. Now the dominated convergence theorem yields the desired derivative.
\end{proof}

\begin{lemma}\label{lem:rel_Hamiltonian_cont}
For $\alpha\geq 1$ and $\sigma\in L^1(M,\tau)$ non-singular, the map
\begin{equation*}
B_\alpha(\sigma)\to M,\,\rho\mapsto \log \rho-\log\sigma
\end{equation*}
is continuous for the $L^1$ norm topology on $B_\alpha(\sigma)$ and the weak$^\ast$ topology on $M$.
\end{lemma}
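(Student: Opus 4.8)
The plan is to represent $\log\rho-\log\sigma$ as an integral over $\lambda\in(0,\infty)$ of resolvent differences, as provided by \Cref{lem:approx_entropy}, to test this representation against a fixed normal functional, and then to pass to the limit with the dominated convergence theorem using a majorant that does not depend on $\rho$. Since $(B_\alpha(\sigma),\norm{\cdot}_1)$ is a metric space it suffices to prove sequential continuity: let $\rho_k\to\rho$ in $L^1(M,\tau)$ with $\rho_k,\rho\in B_\alpha(\sigma)$; I must show $\log\rho_k-\log\sigma\to\log\rho-\log\sigma$ in the weak$^\ast$ topology of $M$. Under the identification $M_\ast\cong L^1(M,\tau)$ every normal functional has the form $\tau(y\,\cdot\,)$, and since every $y\in L^1(M,\tau)$ is a complex linear combination of four elements of $L^1_+(M,\tau)$, it is enough to prove $\tau(y(\log\rho_k-\log\sigma))\to\tau(y(\log\rho-\log\sigma))$ for each fixed $y\in L^1_+(M,\tau)$.

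Fix such a $y$ and, for $\nu\in B_\alpha(\sigma)$ and $\lambda>0$, set $g_\nu(\lambda)=\tau\bigl(y\bigl((\sigma+\lambda)^{-1}-(\nu+\lambda)^{-1}\bigr)\bigr)$. I would first isolate two ingredients. The first is a majorant independent of $\nu$: from $\alpha^{-1}\sigma\leq\nu\leq\alpha\sigma$ and operator monotonicity of $t\mapsto(t+\lambda)^{-1}$ one obtains
\begin{equation*}
    (\sigma+\lambda)^{-1}-(\alpha^{-1}\sigma+\lambda)^{-1}\leq(\sigma+\lambda)^{-1}-(\nu+\lambda)^{-1}\leq(\sigma+\lambda)^{-1}-(\alpha\sigma+\lambda)^{-1},
\end{equation*}
and applying the positive functional $\tau(y\,\cdot\,)$ gives $h_-(\lambda)\leq g_\nu(\lambda)\leq h_+(\lambda)$ with $h_\pm(\lambda)=\tau\bigl(y\bigl((\sigma+\lambda)^{-1}-(\alpha^{\pm1}\sigma+\lambda)^{-1}\bigr)\bigr)$ not depending on $\nu$. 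Applying \Cref{lem:approx_entropy} with $\rho$ replaced by $\alpha^{\pm1}\sigma\in B_\alpha(\sigma)$, together with Fubini for the bounded functional $\tau(y\,\cdot\,)$ on the compact $\lambda$-intervals and monotone convergence (note $\pm h_\pm\geq0$), shows $\int_0^\infty\abs{h_\pm(\lambda)}\,d\lambda=(\log\alpha)\,\tau(y)<\infty$; in particular $g_\nu\in L^1((0,\infty))$ with $\abs{g_\nu}\leq H:=\abs{h_-}+\abs{h_+}$. The second ingredient is pointwise continuity of $\nu\mapsto g_\nu(\lambda)$ for fixed $\lambda$: the resolvent identity gives $\norm{(\nu'+\lambda)^{-1}-(\nu+\lambda)^{-1}}_1\leq\lambda^{-2}\norm{\nu-\nu'}_1$ for $\nu,\nu'\in B_\alpha(\sigma)$, and combined with the uniform operator-norm bound $\lambda^{-1}$ this shows (approximating $y$ in $L^1$ by bounded operators) that $\nu\mapsto(\nu+\lambda)^{-1}$ is continuous from $(B_\alpha(\sigma),\norm{\cdot}_1)$ to $M$ with the weak$^\ast$ topology, whence $\nu\mapsto g_\nu(\lambda)$ is $\norm{\cdot}_1$-continuous.

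To conclude, \Cref{lem:approx_entropy} gives $\int_{1/n}^n g_\nu(\lambda)\,d\lambda=\tau\bigl(y\int_{1/n}^n((\sigma+\lambda)^{-1}-(\nu+\lambda)^{-1})\,d\lambda\bigr)\to\tau(y(\log\nu-\log\sigma))$ for every $\nu\in B_\alpha(\sigma)$, while $g_\nu\in L^1((0,\infty))$ forces $\int_{1/n}^n g_\nu\to\int_0^\infty g_\nu$; hence $\int_0^\infty g_\nu(\lambda)\,d\lambda=\tau(y(\log\nu-\log\sigma))$. Since $g_{\rho_k}\to g_\rho$ pointwise on $(0,\infty)$ and $\abs{g_{\rho_k}}\leq H\in L^1((0,\infty))$, the dominated convergence theorem yields $\tau(y(\log\rho_k-\log\sigma))=\int_0^\infty g_{\rho_k}\,d\lambda\to\int_0^\infty g_\rho\,d\lambda=\tau(y(\log\rho-\log\sigma))$. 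As $y\in L^1_+(M,\tau)$ was arbitrary, this establishes the asserted weak$^\ast$ continuity.

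The only genuinely delicate point is topological. One might hope for norm continuity of $\nu\mapsto\log\nu-\log\sigma$, since the logarithm is operator-Lipschitz on intervals bounded away from $0$ and $\infty$; but $\sigma$ need be neither bounded nor bounded below, and the resolvent differences $(\sigma+\lambda)^{-1}-(\nu+\lambda)^{-1}$ are not small in operator norm near the edges of the spectrum of $\sigma$ (even for $\nu$ close to $\sigma$ and $\lambda$ small or large). The remedy is to test against a \emph{fixed} $y\in M_\ast=L^1(M,\tau)$ and to use that the $L^1$-integrability of $y$ turns the operator sandwich into the $\nu$-uniform majorant $H$. One also has to remember throughout that the $\lambda$-integral representing $\log\nu-\log\sigma$ converges only in the weak$^\ast$ sense and not in norm, which is why \Cref{lem:approx_entropy}, with its built-in bound $\log\alpha$, rather than \Cref{lem:integral_formula_entropy}, is the right tool.
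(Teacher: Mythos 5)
Your proof is correct, but it takes a genuinely different route from the paper. The paper's argument is two lines: using the uniform bound $\norm{\log\rho-\log\sigma}\leq\log\alpha$ it reduces to testing against the dense cone $B(\sigma)\subset M_\ast^+$ (\Cref{lem:dense_pos_cone_Lp}), rewrites $\tau((\log\rho_i-\log\rho)\nu)$ as the difference $\tau((\log\rho_i-\log\nu)\nu)-\tau((\log\rho-\log\nu)\nu)$ of (negatives of) relative entropies $D(\nu\Vert\rho_i)$, $D(\nu\Vert\rho)$, and then invokes Araki's continuity theorem for the relative entropy \cite[Theorem 3.7 (2)]{Ara78}. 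You instead stay entirely inside the resolvent-integral machinery of \Cref{lem:integral_formula_entropy,lem:approx_entropy}: you test against an arbitrary $y\in L^1_+(M,\tau)$, write the pairing as $\int_0^\infty\tau\bigl(y\bigl((\sigma+\lambda)^{-1}-(\nu+\lambda)^{-1}\bigr)\bigr)\,d\lambda$, extract a $\nu$-uniform integrable majorant from the order bounds $\alpha^{-1}\sigma\leq\nu\leq\alpha\sigma$ and operator antimonotonicity of the resolvent, check pointwise $L^1$-continuity in $\nu$ via the resolvent identity, and finish with dominated convergence. All the individual steps check out (including the computation $\int_0^\infty\abs{h_\pm}\,d\lambda=(\log\alpha)\tau(y)$, which uses non-singularity of $\sigma$, and the identification $\int_0^\infty g_\nu\,d\lambda=\tau(y(\log\nu-\log\sigma))$ via the weak$^\ast$ limit in \Cref{lem:approx_entropy}). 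What the paper's route buys is brevity, at the price of leaning on a nontrivial external result from Araki's perturbation theory; what your route buys is a self-contained, elementary argument that also dispenses with the density reduction, since it handles every $y\in L^1_+(M,\tau)$ directly with an explicit dominating function.
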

\begin{proof}
Since $\norm{\log\rho-\log\sigma}\leq \log \alpha$ for $\rho\in B_\alpha(\sigma)$, by \Cref{lem:dense_pos_cone_Lp} it suffices to show that whenever $\norm{\rho-\rho_i}_1\to 0$, then $\tau((\log\rho_i-\log \rho)\nu)\to 0$ for all $\nu\in B(\sigma)$. We have
\begin{align*}
\tau((\log\rho_i-\log\rho)\nu)=\tau((\log \rho_i-\log \nu)\nu)-\tau((\log\rho-\log\nu)\nu),
\end{align*}
and the claimed convergence follows from \cite[Theorem 3.7 (2)]{Ara78}.
\end{proof}

\begin{proposition}\label{prop:deriv_entropy_semifinite}
Let $\sigma\in L^1_+(M,\tau)$ be non-singular and let $(\rho_t)_{t\geq 0}$ be a continuously differentiable curve in $L^1(M,\tau)$ with $\tau(\rho_t)=\tau(\rho_0)$ for all $t\geq 0$ and for which there exists $\alpha\geq 1$ such that $\rho_t\in B_\alpha(\sigma)$ for all $t\geq 0$.

The map
\begin{equation*}
f\colon[0,\infty)\to \IR,\,t\mapsto \tau(\rho_t(\log\rho_t-\log \sigma)
\end{equation*}
is differentiable with derivative
\begin{equation*}
f'(t)=\tau(\dot\rho_t(\log\rho_t-\log\sigma)).
\end{equation*}
\end{proposition}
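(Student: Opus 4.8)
The plan is to differentiate the approximations $f_n$ from \Cref{lem:deriv_approx_entropy} and pass to the limit $n\to\infty$, transferring the limit back to $f$ by the fundamental theorem of calculus. Recall from \Cref{lem:approx_entropy} that $f_n(t)\to f(t)$ for every fixed $t\ge 0$, and from \Cref{lem:deriv_approx_entropy} that $f_n$ is differentiable with $f_n'(t)=\int_{1/n}^n\tau(\dot\rho_t((\sigma+\lambda)^{-1}-(\rho_t+\lambda)^{-1}+\rho_t(\rho_t+\lambda)^{-2}))\,d\lambda$. Since $\dot\rho_t\in L^1(M,\tau)$ defines a bounded functional $\tau(\dot\rho_t\,\cdot\,)$ on $M$ and both $\lambda$-integrals converge in norm in $M$, this may be rewritten as
\begin{gather*}
f_n'(t)=\tau(\dot\rho_t\,x_n(t))+\tau(\dot\rho_t\,a_n(t)),\\
x_n(t)=\int_{1/n}^n\big((\sigma+\lambda)^{-1}-(\rho_t+\lambda)^{-1}\big)\,d\lambda,\qquad a_n(t)=\int_{1/n}^n\rho_t(\rho_t+\lambda)^{-2}\,d\lambda.
\end{gather*}

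First I would establish the pointwise limit of $f_n'(t)$. By \Cref{lem:approx_entropy} one has $x_n(t)\to\log\rho_t-\log\sigma$ in the weak$^\ast$ topology with $\norm{x_n(t)}\le\log\alpha$, and since $\dot\rho_t\in L^1(M,\tau)\cong M_\ast$ this gives $\tau(\dot\rho_t\,x_n(t))\to\tau(\dot\rho_t(\log\rho_t-\log\sigma))$. For the second summand, the elementary identity $\int_{1/n}^n(\rho_t+\lambda)^{-2}\,d\lambda=(\rho_t+\tfrac1n)^{-1}-(\rho_t+n)^{-1}$ gives
\begin{equation*}
a_n(t)=\rho_t\big(\rho_t+\tfrac1n\big)^{-1}-\rho_t(\rho_t+n)^{-1},
\end{equation*}
which is a positive contraction and, since $\rho_t\ge\alpha^{-1}\sigma$ is non-singular, converges to $1$ in the weak$^\ast$ topology; hence $\tau(\dot\rho_t\,a_n(t))\to\tau(\dot\rho_t)$. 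Finally, $\tau$ restricts to a bounded linear functional on $L^1(M,\tau)$, so $t\mapsto\tau(\rho_t)$ is differentiable with derivative $\tau(\dot\rho_t)$, and as it is constant by hypothesis we get $\tau(\dot\rho_t)=0$. Therefore $f_n'(t)\to g(t):=\tau(\dot\rho_t(\log\rho_t-\log\sigma))$ for each $t$.

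Then I would upgrade this to differentiability of $f$. The bounds $\norm{x_n(t)}\le\log\alpha$ and $\norm{a_n(t)}\le 1$ yield $\abs{f_n'(t)}\le(\log\alpha+1)\norm{\dot\rho_t}_1$, which is bounded uniformly in $n$ on every compact interval $[0,T]$ because $t\mapsto\norm{\dot\rho_t}_1$ is continuous; in particular each $f_n$ is locally Lipschitz, hence $f_n(t)-f_n(s)=\int_s^t f_n'(r)\,dr$. The limit $g$ is continuous: combining the $L^1$-continuity of $t\mapsto\dot\rho_t$ with the continuity of $t\mapsto\log\rho_t-\log\sigma$ from the $L^1$ topology into $M$ with its weak$^\ast$ topology (\Cref{lem:rel_Hamiltonian_cont}), and using that both curves are bounded (by $\sup_{r\in[0,T]}\norm{\dot\rho_r}_1$ and $\log\alpha$ on $[0,T]$), the splitting $\tau(\dot\rho_t h_t)-\tau(\dot\rho_s h_s)=\tau((\dot\rho_t-\dot\rho_s)h_t)+\tau(\dot\rho_s(h_t-h_s))$ with $h_t=\log\rho_t-\log\sigma$ shows both terms vanish as $s\to t$. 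For $0\le s\le t$, dominated convergence now gives
\begin{equation*}
f(t)-f(s)=\lim_{n\to\infty}\big(f_n(t)-f_n(s)\big)=\lim_{n\to\infty}\int_s^t f_n'(r)\,dr=\int_s^t g(r)\,dr,
\end{equation*}
and since $g$ is continuous this shows $f$ is (continuously) differentiable with $f'=g$, which is the assertion.

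I expect the only genuinely delicate point to be the summand $a_n(t)$: unlike the integrand of $x_n(t)$, the integrand $\rho_t(\rho_t+\lambda)^{-2}$ does not vanish as $n\to\infty$ for fixed $\lambda$, and only its $\lambda$-integral converges, namely to $1$. Its contribution $\tau(\dot\rho_t)$ is then killed exactly by the conservation of trace $\tau(\rho_t)=\tau(\rho_0)$ --- the operator-algebraic shadow of $\tfrac{d}{ds}(s\log s)=\log s+1$, with the ``$+1$'' being precisely what the trace constraint removes. Everything else is a routine assembly of \Cref{lem:approx_entropy}, \Cref{lem:deriv_approx_entropy} and \Cref{lem:rel_Hamiltonian_cont}.
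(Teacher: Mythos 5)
Your proof is correct and follows essentially the same route as the paper: split $f_n'$ into the $x_n$-part and the $\rho_t(\rho_t+\lambda)^{-2}$-part, identify the pointwise limits, use the trace conservation to kill the second contribution, and conclude via uniform bounds, dominated convergence and the fundamental theorem of calculus together with continuity of $r\mapsto\tau(\dot\rho_r(\log\rho_r-\log\sigma))$ from \Cref{lem:rel_Hamiltonian_cont}. The only (valid) variation is in the second summand, where you compute $\int_{1/n}^n\rho_t(\rho_t+\lambda)^{-2}\,d\lambda=\rho_t(\rho_t+\tfrac1n)^{-1}-\rho_t(\rho_t+n)^{-1}$ explicitly and use that this positive contraction tends to $1$ weak$^\ast$ by non-singularity of $\rho_t$, whereas the paper reaches the same limit $\tau(\dot\rho_t)=0$ by testing against the spectral measure with $\xi_\pm=(\dot\rho_t)_\pm^{1/2}$.
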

\begin{proof}
Let
\begin{equation*}
f_n\colon [0,\infty)\to \IR,\,t\mapsto\int_{1/n}^n \tau(\rho_t((\sigma+\lambda)^{-1}-(\rho_t+\lambda)^{-1}))\,d\lambda.
\end{equation*}
By Lemma \ref{lem:approx_entropy} the sequence $(f_n)$ converges to $f$ pointwise. By Lemma \ref{lem:deriv_approx_entropy}, $f_n$ is differentiable with derivative 
\begin{equation*}
f_n'(s)=\int_{1/n}^n \tau(\dot\rho_s((\sigma+\lambda)^{-1}-(\rho_s+\lambda)^{-1}+\rho_s(\rho_s+\lambda)^{-2}))\,d\lambda.
\end{equation*}
Let
\begin{align*}
g_n(s)&=\int_{1/n}^n \tau(\dot\rho_s((\sigma+\lambda)^{-1}-(\rho_s+\lambda)^{-1}))\,d\lambda,\\
h_n(s)&=\int_{1/n}^n \tau(\dot\rho_s\rho_s(\rho_s+\lambda)^{-2})\,d\lambda.
\end{align*}
By Lemma \ref{lem:approx_entropy} we have
\begin{equation*}
\abs{g_n(s)}\leq \norm{\dot\rho_s}_1\log\alpha
\end{equation*}
and $g_n(s)\to \tau(\dot\rho_s(\log\rho_s-\log\sigma))$ as $n\to\infty$.

Since $s\mapsto \dot\rho_s$ is continuous in $L^1(M,\tau)$ and $s\mapsto \log\rho_s-\log\sigma$ is bounded and weak$^\ast$ continuous by Lemma \ref{lem:rel_Hamiltonian_cont}, the map $s\mapsto \tau(\dot\rho_s(\log\rho_s-\log\sigma))$ is continuous.

For $h_n$ note that if $\xi\in L^2(M,\tau)$ and $\mu_\xi$ denotes the spectral measure of $\rho_s$ with respect to $\xi$, then
\begin{equation*}
\int_{1/n}^n\langle\xi,\rho_s(\rho_s+\lambda)^{-2}\xi\rangle\,d\lambda\nearrow \int_0^\infty \int_0^\infty \frac{x}{(x+\lambda)^2}\,d\lambda\,d\mu_\xi(x)=\norm{\xi}_2^2.
\end{equation*}
Thus
\begin{equation*}
\abs{h_n(s)}\leq \norm{\dot\rho_s}_1
\end{equation*}

Moreover, letting $\xi_{\pm}=(\dot\rho_s)_\pm^{1/2}$ we obtain
\begin{align*}
h_n(s)&=\int_{1/n}^n \langle\xi_+,\rho_s(\rho_s+\lambda)^{-2}\xi_+\rangle\,d\lambda-\int_{1/n}^n \langle\xi_-,\rho_s(\rho_s+\lambda)^{-2}\xi_-\rangle\,ds\\
&\to \norm{\xi_+}_2^2-\norm{\xi_-}_2^2.
\end{align*}
As $\tau(\rho_t)=\tau(\rho)$ for all $t\geq 0$ by assumption, we have
\begin{equation*}
\norm{\xi_+}_2^2-\norm{\xi_-}_2^2=\tau((\dot\rho_s)_+-(\dot\rho_s)_-)=\frac{d}{ds}\tau(\rho_s)=0.
\end{equation*}
Hence $g_n(s)+h_n(s)\to \tau(\dot\rho_s(\log\rho_s-\log\sigma))$ as $n\to\infty$ and the sequence $(g_n+h_n)$ is uniformly bounded on compact subsets of $[0,\infty)$.

Therefore,
\begin{align*}
f_n(t)-f_n(s)&=\int_s^t (g_n(r)+h_n(r))\,dr\\
&\to \int_s^t \tau(\dot\rho_r(\log\rho_r-\log\sigma))\,dr
\end{align*}
by the dominated convergence theorem. Since $f_n(t)\to f(t)$, $f_n(s)\to f(s)$, it follows that $f$ is differentiable with the desired derivative.
\end{proof}

As a consequence we obtain the deBruijn identity for quantum Markov semigroups on finite von Neumann algebras. Recall that a \emph{quantum Markov semigroup} is a semigroup $(P_t)$ of normal unital completely positive maps on $M$ such that $t\mapsto P_t(x)$ is weak$^\ast$ continuous for all $x\in M$. As $P_t$ is normal, it is the adjoint of a bounded linear operator $P_{t\ast}$ on $M_\ast$, and $(P_{t\ast})$ is a strongly continuous semigroup on $M_\ast$. The \emph{generator} $\L_\ast$ of $(P_{t\ast})$ is defined by
\begin{align*}
    \dom(\L_\ast)&=\{\rho\in L^1(M,\tau)\mid \lim_{t\to 0}\frac 1 t(\rho-P_{t\ast}(\rho))\text{ exists}\}\\
    \L_\ast(\rho)&=\lim_{t\to 0}\frac{\rho-P_{t\ast}(\rho)}{t}.
\end{align*}
By the semigroup property, if $\rho\in \dom(\L_\ast)$, then $P_{t\ast}\rho\in \dom(\L_\ast)$ for all $t\geq 0$ and $\frac{d}{dt}P_{t\ast}(\rho)=-\L_\ast(P_{t\ast}(\rho))$.

\begin{theorem}[De Bruijn's identity -- finite case]\label{thm:DeBruijn_semifinite}
Let $(P_t)$ be a quantum Markov semigroup on $M$ and let $\sigma\in L^1_+(M,\tau)$ be non-singular and satisfy $P_{t\ast}(\sigma)=\sigma$ for all $t\geq 0$.

For $\rho\in B(\sigma)\cap \dom(\L_\ast)$ and $t\geq 0$ one has
\begin{equation*}
\frac{d}{dt}\tau(P_{t\ast}(\rho)(\log P_{t\ast}(\rho)-\log\sigma))=-\tau(\L_\ast(P_{t\ast}(\rho))(\log P_{t\ast}(\rho)-\log\sigma)).
\end{equation*}
\end{theorem}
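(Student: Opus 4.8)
The plan is to apply \Cref{prop:deriv_entropy_semifinite} to the curve $\rho_t=P_{t\ast}(\rho)$. For this I must verify that this curve satisfies all the hypotheses of that proposition, namely: it is continuously differentiable in $L^1(M,\tau)$, it has constant trace, and it remains inside a fixed $B_\alpha(\sigma)$. Granting those, the proposition immediately gives that $t\mapsto\tau(\rho_t(\log\rho_t-\log\sigma))$ is differentiable with derivative $\tau(\dot\rho_t(\log\rho_t-\log\sigma))$, and since $\dot\rho_t=-\L_\ast(P_{t\ast}(\rho))$ by the semigroup differentiability recalled just before the theorem, this is exactly the claimed identity.

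So the work is in checking the three hypotheses. First, $C^1$-regularity: since $\rho\in\dom(\L_\ast)$, the standard theory of strongly continuous semigroups gives that $t\mapsto P_{t\ast}(\rho)$ is differentiable in $L^1(M,\tau)$ with derivative $-\L_\ast(P_{t\ast}(\rho))$; continuity of this derivative follows because $P_{t\ast}(\rho)\in\dom(\L_\ast)$ for all $t$, $\L_\ast$ commutes with $P_{t\ast}$, and $t\mapsto P_{t\ast}(\L_\ast(\rho))$ is strongly continuous. Second, constant trace: $P_{t\ast}$ is the predual of the unital map $P_t$, so $\tau(P_{t\ast}(\rho))=\langle P_{t\ast}(\rho),1\rangle=\langle\rho,P_t(1)\rangle=\langle\rho,1\rangle=\tau(\rho)$ for all $t$.

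The main obstacle is the third hypothesis — that $P_{t\ast}(\rho)$ stays in a fixed order interval $B_\alpha(\sigma)$. This is where the invariance $P_{t\ast}(\sigma)=\sigma$ and the complete positivity of $P_t$ come in. If $\rho\in B_\alpha(\sigma)$, then $\alpha\sigma-\rho\geq 0$ and $\rho-\alpha^{-1}\sigma\geq 0$ in $L^1_+(M,\tau)$; applying the positive map $P_{t\ast}$ and using $P_{t\ast}(\sigma)=\sigma$ yields $\alpha\sigma-P_{t\ast}(\rho)\geq 0$ and $P_{t\ast}(\rho)-\alpha^{-1}\sigma\geq 0$, i.e. $P_{t\ast}(\rho)\in B_\alpha(\sigma)$. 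The one subtlety to handle carefully is that $P_{t\ast}$ a priori acts on $M_\ast\cong L^1(M,\tau)$ and one must know it is positive as a map on this ordered Banach space and that the order relations $\alpha^{-1}\sigma\leq\rho\leq\alpha\sigma$ between $L^1_+$ operators are preserved; this is standard since $P_{t\ast}$ is the predual of a positive normal map and the order on $M_\ast$ restricts to the order on $L^1_+(M,\tau)$. In particular $P_{t\ast}(\rho)$ is non-singular (being $\geq\alpha^{-1}\sigma$ with $\sigma$ non-singular), so $\log P_{t\ast}(\rho)-\log\sigma\in M$ by \Cref{lem:diff_log_bounded} and all expressions make sense. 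With these checks in place, \Cref{prop:deriv_entropy_semifinite} applies to $(\rho_t)_{t\geq s}$ for each $s\geq 0$ (re-centering the curve), giving differentiability at every $t\geq 0$ and completing the proof.
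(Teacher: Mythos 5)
Your proposal is correct and follows essentially the same route as the paper: verify that $t\mapsto P_{t\ast}(\rho)$ is a continuously differentiable, trace-preserving curve staying in a fixed $B_\alpha(\sigma)$ (using positivity of $P_{t\ast}$ and invariance of $\sigma$), then invoke \Cref{prop:deriv_entropy_semifinite}. The extra care you take with the order-preservation of $P_{t\ast}$ on $L^1_+(M,\tau)$ and the non-singularity of $P_{t\ast}(\rho)$ is sound but not needed beyond what the paper records.
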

\begin{proof}
By the definition of the generator, the curve $(P_{t\ast}(\rho))_{t\geq 0}$ is continuously differentiable with derivative $\frac{d}{dt}P_{t\ast}(\rho)=-\L_\ast(P_{t\ast}(\rho))$. Moreover, if $\alpha\geq 1$ such that $\rho\in B_\alpha(\sigma)$, then
\begin{equation*}
\alpha^{-1}\sigma=\alpha^{-1}P_{t\ast}(\sigma)\leq P_{t\ast}(\rho)\leq \alpha P_{t\ast}(\sigma)=\alpha\sigma
\end{equation*}
for all $t\geq 0$ since $P_t$ is positive. In other words, $P_{t\ast}(\rho)\in B_\alpha(\sigma)$ for all $t\geq 0$. Finally,
\begin{equation*}
\tau(P_{t\ast}(\rho))=\tau(P_t(1)\rho)=\tau(\rho)
\end{equation*}
for all $t\geq 0$.

Thus Proposition \ref{prop:deriv_entropy_semifinite} yields the claim.
\end{proof}

\section{Haagerup reduction and main result}

In this section, we set up the technical apparatus to reduce the general case of the deBruijn identity to the finite case treated in the previous section.

Let $M$ be a von Neumann algebra, $\phi$ a normal faithful state on $M$ and $(P_t)$ a quantum Markov semigroup on $M$. Let $\L_\ast$ denote the generator of the strongly continuous semigroup $(P_{t\ast})$ on $M_\ast$ and let $\L=(\L_\ast)^\ast$.

We use Haagerup $L^p$ spaces. We do not give a full definition and refer to \cite{Ter81} for an exposition. Let us just remark that $L^p(M)$ consists of closed densely defined operators affiliated with the crossed product $M\rtimes_{\sigma^\phi}\IR$ and there is an order-preserving isometric isomorphism $M_\ast\to L^1(M)$, $\psi\mapsto h_\psi$. With a slight abuse of notation, we also write $P_{t\ast}$ and $\L_\ast$ for the semigroup and generator on $L^1(M)$ instead of $M_\ast$.

Similar to the tracially symmetric case, for $\alpha\geq 1$ we define 
\begin{equation*}
B_{\alpha}(\phi)=\{\psi\in M_\ast^+\mid \alpha^{-1}\phi\leq \psi\leq \alpha\phi\}
\end{equation*}
and $B(\phi)=\bigcup_{\alpha\geq 1}B_{\alpha}(\phi)$.

\begin{lemma}
The set $B(\phi)\cap \dom(\L_\ast)$ is dense in $M_\ast^+$.
\end{lemma}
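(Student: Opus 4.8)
The plan is to establish density in two stages: first show that elements of $B(\phi)\cap \dom(\L_\ast)$ can approximate any element of $B(\phi)$, and then recall (or observe) that $B(\phi)$ itself is dense in $M_\ast^+$. For the second stage, the standard argument is that for $\psi \in M_\ast^+$ and $\epsilon > 0$, the state $\psi_\epsilon = \frac{1}{1+\epsilon}(\psi + \epsilon \phi)$ (suitably normalized, or just $\psi + \epsilon\phi$ without normalization) satisfies $\epsilon\phi \le \psi_\epsilon$ but need not be dominated by a multiple of $\phi$ from above; to fix the upper bound one truncates $\psi$ using spectral projections of $h_\psi$ relative to $h_\phi$, or more simply uses that the elements $x^\ast \phi x$ for $x \in M$ analytic and invertible are $\|\cdot\|_1$-dense in $M_\ast^+$ and comparable to $\phi$. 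Either way this is a routine approximation I would cite or dispatch quickly.

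The substantive point is the first stage: approximating a fixed $\psi \in B_\alpha(\phi)$ by elements of $\dom(\L_\ast)$ that still lie in some $B_{\alpha'}(\phi)$. The natural candidates are the time-averages
\begin{equation*}
\psi_t = \frac{1}{t}\int_0^t P_{s\ast}(\psi)\,ds, \qquad t > 0,
\end{equation*}
which are the classical mollifications of the strongly continuous semigroup $(P_{s\ast})$ on $M_\ast$ (equivalently on $L^1(M)$). Standard semigroup theory gives $\psi_t \in \dom(\L_\ast)$ with $\L_\ast(\psi_t) = \frac{1}{t}(\psi - P_{t\ast}(\psi))$, and $\psi_t \to \psi$ in $\|\cdot\|_1$ as $t \to 0$ by strong continuity. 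The key observation is that $\psi_t$ remains comparable to $\phi$: since $P_{s\ast}$ is positive and preserves $\phi$ (that is, $P_{s\ast}(\phi) = \phi$ — here one should note this is part of the standing assumptions, as $\phi$ is the invariant state), the bounds $\alpha^{-1}\phi \le \psi \le \alpha\phi$ are preserved under each $P_{s\ast}$, and hence under the average:
\begin{equation*}
\alpha^{-1}\phi = \frac{1}{t}\int_0^t P_{s\ast}(\alpha^{-1}\phi)\,ds \le \psi_t \le \frac{1}{t}\int_0^t P_{s\ast}(\alpha\phi)\,ds = \alpha\phi,
\end{equation*}
so in fact $\psi_t \in B_\alpha(\phi)$ for all $t > 0$. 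Thus $\psi_t \in B(\phi)\cap\dom(\L_\ast)$ and $\psi_t \to \psi$, giving density of $B(\phi)\cap\dom(\L_\ast)$ in $B(\phi)$.

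Combining the two stages: given $\psi \in M_\ast^+$ and $\epsilon>0$, first pick $\psi' \in B(\phi)$ with $\|\psi - \psi'\|_1 < \epsilon/2$, then pick $t>0$ small enough that $\|\psi' - \psi'_t\|_1 < \epsilon/2$, and note $\psi'_t \in B(\phi)\cap\dom(\L_\ast)$. The main obstacle I anticipate is making sure the two order relations interact correctly — in particular verifying that the positivity and $\phi$-invariance of $P_{s\ast}$ genuinely pass to the operator order $\alpha^{-1}\phi \le \psi$ on $M_\ast^+$ (this uses that $P_{s\ast}$ is the predual of the unital positive map $P_s$, so $\psi \le \alpha\phi$ pulls back to $P_{s\ast}(\psi) \le \alpha P_{s\ast}(\phi) = \alpha\phi$), and handling the first-stage density of $B(\phi)$ in $M_\ast^+$ cleanly, since the obvious mollification $\psi + \epsilon\phi$ gives the lower bound but not the upper one, so a spectral truncation argument is needed there. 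Neither difficulty is deep, but the bookkeeping of which approximation buys which bound is where care is required.
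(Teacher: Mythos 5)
Your proof is correct and follows essentially the same two-stage strategy as the paper: first the density of $B(\phi)$ in $M_\ast^+$ (the paper's \Cref{lem:dense_pos_cone_Lp}, proved by the cyclic-vector argument you allude to), then a semigroup mollification that lands in $\dom(\L_\ast)$ and preserves the bounds $\alpha^{-1}\phi\leq\cdot\leq\alpha\phi$ precisely because $P_{s\ast}$ is positive and fixes $\phi$. The only cosmetic difference is the choice of mollifier: the paper uses the resolvents $n(\L_\ast+n)^{-1}=n\int_0^\infty e^{-ns}P_{s\ast}\,ds$ where you use the Cesàro averages $\frac 1t\int_0^t P_{s\ast}\,ds$, and both work for exactly the same reasons.
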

\begin{proof}
By Lemma \ref{lem:dense_pos_cone_Lp} the set $B(\phi)$ is dense in $M_\ast^+$. If $\psi\in B(\phi)$, then $n(\L_\ast+n)^{-1}(\psi)\in B(\phi)\cap \dom(\L_\ast)$ since $n(\L+n)^{-1}$ is a normal unital completely positive map, and $n(\L_\ast+n)^{-1}(\psi)\to \psi$.
\end{proof}

For $\psi\in M_\ast^+$ the support $s(\psi)$ of $\psi$ is the smallest projection $p\in M$ such that $\psi(1-p)=0$. If $\psi,\omega\in M_\ast^+$, let $\Delta_{\psi,\omega}$ denote the relative modular operator and $E_{\psi,\omega}$ its spectral measure. The \emph{(Umegaki) relative entropy} of $\psi$ with respect to $\omega$ is defined as
\begin{equation*}
D(\psi\Vert\omega)=\int_{(0,\infty)}\lambda \log\lambda\,d\langle h_\omega^{1/2},E_{\psi,\omega}(\lambda) h_\omega^{1/2}\rangle+\infty\cdot\psi(1-s(\omega))\in (-\infty,\infty].
\end{equation*}
Here we use the usual convention $\infty\cdot 0=0$. For more information on the relative entropy in von Neumann algebras, we refer to \cite{Ara76,Ara78,Hia18}.

If $\psi\in B_\alpha(\phi)$ for some $\alpha\geq 1$, the operator $h=\log h_\psi-\log h_\phi$ has norm bounded by $\log\alpha$ and belongs to $M$ (see \cite[Theorem B.1]{Hia18}). It satisfies 
\begin{equation*}
D(\omega\Vert \phi)=D(\omega\Vert \psi)+\omega(h)
\end{equation*}
for $\omega\in M_\ast^+$. In particular,
\begin{equation*}
D(\psi\Vert \phi)=\psi(h)=\tr(h_\psi(\log h_\psi-\log h_\phi))<\infty.
\end{equation*}

With these properties, the proof of the following lemma is the same as in the finite case (Lemma \ref{lem:rel_Hamiltonian_cont}).

\begin{lemma}\label{lem:rel_Hamiltonian_cont_II}
For $\alpha\geq 1$ the map
\begin{equation*}
B_\alpha(\phi)\to M,\,\psi\mapsto \log h_\psi-\log h_\phi
\end{equation*}
is continuous for the $M_\ast$ norm topology on $B_\alpha(\phi)$ and the weak$^\ast$ topology on $M$.
\end{lemma}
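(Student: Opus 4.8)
The plan is to run the argument of \Cref{lem:rel_Hamiltonian_cont} in the Haagerup setting, using the facts recalled just above as substitutes for the tools used there. Since $\norm{\log h_\psi-\log h_\phi}\le\log\alpha$ for every $\psi\in B_\alpha(\phi)$, the map takes values in a fixed norm-bounded ball of $M$, and on such a ball the weak$^\ast$ topology is induced by the functionals from any norm-dense subset of $M_\ast$. The preceding lemma shows $B(\phi)$ is dense in $M_\ast^+$, hence its linear span is dense in $M_\ast$ (every normal functional being a complex combination of normal positive ones), so it suffices to test against $\omega\in B(\phi)$. Because the norm topology on $B_\alpha(\phi)$ is metrizable, it is enough to prove sequential continuity: given a sequence $(\psi_i)$ in $B_\alpha(\phi)$ with $\norm{\psi_i-\psi}_{M_\ast}\to 0$ — note that $\psi\in B_\alpha(\phi)$, since the bounds $\alpha^{-1}\phi\le\psi_i\le\alpha\phi$ pass to the limit when tested on positive elements of $M$ — I must show $\omega(\log h_{\psi_i}-\log h_\phi)\to\omega(\log h_\psi-\log h_\phi)$ for each $\omega\in B(\phi)$.

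Fix such an $\omega$. Applying the relative-Hamiltonian identity $D(\omega\Vert\phi)=D(\omega\Vert\psi)+\omega(\log h_\psi-\log h_\phi)$ recalled above, once with $\psi_i$ in place of $\psi$ and once with $\psi$ itself (both being comparable to $\phi$), gives
\[
\omega(\log h_{\psi_i}-\log h_\phi)=D(\omega\Vert\phi)-D(\omega\Vert\psi_i),\qquad \omega(\log h_\psi-\log h_\phi)=D(\omega\Vert\phi)-D(\omega\Vert\psi),
\]
which mirrors the decomposition $\tau((\log\rho_i-\log\rho)\nu)=\tau((\log\rho_i-\log\nu)\nu)-\tau((\log\rho-\log\nu)\nu)$ of the finite case. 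Here $D(\omega\Vert\psi_i)$ and $D(\omega\Vert\psi)$ are finite, because $\omega$ lies in some $B_\beta(\phi)$, so $\omega$ is comparable to each $\psi_i$ and to $\psi$, and the relative entropy of comparable states equals the pairing against a bounded element of $M$. Thus the claim is equivalent to $D(\omega\Vert\psi_i)\to D(\omega\Vert\psi)$. Since $\alpha^{-1}\phi\le\psi_i\le\alpha\phi$ uniformly in $i$ and $\psi_i\to\psi$ in $M_\ast$, this convergence is precisely the continuity property of the relative entropy established in \cite[Theorem 3.7 (2)]{Ara78}, which concludes the proof.

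The only real work is bookkeeping: confirming that the two ingredients driving the finite-case proof are genuinely available in the Haagerup framework. Both are — the uniform bound $\norm{\log h_\psi-\log h_\phi}\le\log\alpha$ on $B_\alpha(\phi)$ and the identity $D(\omega\Vert\phi)=D(\omega\Vert\psi)+\omega(\log h_\psi-\log h_\phi)$ are recorded in \cite[Theorem B.1]{Hia18}, and \cite[Theorem 3.7 (2)]{Ara78} is stated for states on arbitrary von Neumann algebras. I therefore do not expect a genuine obstacle; the only point requiring care is to check that norm convergence in $M_\ast$ together with the uniform two-sided bound $\alpha^{-1}\phi\le\psi_i\le\alpha\phi$ is exactly the hypothesis under which \cite[Theorem 3.7 (2)]{Ara78} asserts continuity of $\psi\mapsto D(\omega\Vert\psi)$, and that all the relative entropies appearing are finite, as noted above.
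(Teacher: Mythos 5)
Your proof is correct and follows exactly the route the paper intends: the paper does not write out a separate argument for this lemma but states that "the proof is the same as in the finite case" (\Cref{lem:rel_Hamiltonian_cont}), namely the uniform norm bound, testing against the dense set $B(\phi)$, the relative-Hamiltonian decomposition into a difference of relative entropies, and the continuity from \cite[Theorem 3.7 (2)]{Ara78}. Your write-up just makes explicit the bookkeeping the paper leaves implicit.
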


To use the results from the previous section in the current setting when $M$ is not necessarily finite, we will take advantage of Haagerup's reduction method. For that purpose, the following two approximation lemmas will be useful.

\begin{lemma}\label{lem:entropy_extension}
Let $M\subset \tilde M$ be a unital inclusion of von Neumann algebras with faithful normal conditional expectation $E\colon \tilde M\to M$.
\begin{itemize}
\item[(a)] If $\psi\in M_\ast^+$, then $D(\psi\circ E\Vert \phi\circ E)=D(\psi\Vert \phi)$.

\item[(b)]If $\psi\in B(\phi)$, then $E(\log h_{\psi\circ E}-\log h_{\phi\circ E})=\log h_\psi-\log h_\phi$.
\end{itemize}
\end{lemma}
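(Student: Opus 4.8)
The plan is to exploit the compatibility between the relative modular theory on $\tilde M$ and on $M$ induced by the conditional expectation $E$. Recall that for a faithful normal conditional expectation $E\colon \tilde M\to M$, the modular group of $\phi\circ E$ restricts to the modular group of $\phi$ on $M$ and leaves $E$ invariant; this is the classical criterion of Takesaki. Consequently, the relative modular operator $\Delta_{\psi\circ E,\phi\circ E}$ on the GNS space $L^2(\tilde M)$ is related to $\Delta_{\psi,\phi}$ on $L^2(M)$ in a controlled way. For part (a), I would use the fact (due to Hiai, see also \cite{Ara78}) that for a normal faithful conditional expectation the relative entropy is preserved, $D(\psi\circ E\Vert\phi\circ E)=D(\psi\Vert\phi)$: this follows either from the monotonicity of relative entropy (which gives $\leq$ since $E$ is a channel and $\psi\circ E\circ\iota=\psi$, where $\iota\colon M\hookrightarrow\tilde M$; and also $\geq$ by applying monotonicity to $\iota$ with $\psi\circ E,\phi\circ E$ restricted), or directly from the interpolation/variational formula for relative entropy together with the fact that $h_{\psi\circ E}=h_\psi$ in the appropriate identification of $L^1(M)\subset L^1(\tilde M)$. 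Since part (a) is essentially standard, I would state it with a reference and a one-line monotonicity argument.

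The substance is part (b). The key observation is that for $\psi\in B_\alpha(\phi)$ the perturbed state $\psi$ is the Connes cocycle perturbation of $\phi$ by the bounded self-adjoint $h=\log h_\psi-\log h_\phi\in M$, i.e. $h_\psi=e^{\log h_\phi+h}$ in the sense of the Araki perturbation theory of states, because the inequalities $\alpha^{-1}\phi\leq\psi\leq\alpha\phi$ guarantee $h\in M$ with $\norm h\leq\log\alpha$ by \cite[Theorem B.1]{Hia18}. Likewise $\psi\circ E\in B_\alpha(\phi\circ E)$, so $\tilde h:=\log h_{\psi\circ E}-\log h_{\phi\circ E}\in\tilde M$ with $h_{\psi\circ E}=e^{\log h_{\phi\circ E}+\tilde h}$. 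Now one uses the transitivity/uniqueness of the perturbed state: the perturbation of $\phi\circ E$ by any $k\in M\subset\tilde M$ is again of the form $\omega\circ E$ where $\omega$ is the perturbation of $\phi$ by $k$ — this is precisely the statement that Araki perturbation commutes with composition by $E$, which in turn follows from the invariance of $E$ under $\sigma^{\phi\circ E}$ and the Dyson series expansion of the perturbed cocycle (each term lands in $M$ because $\sigma^{\phi\circ E}_t$ preserves $M$ and $k\in M$). Applying this with the perturbation $k=E(\tilde h)\in M$: the perturbation of $\phi\circ E$ by $E(\tilde h)$ equals $\omega\circ E$ where $\log h_\omega-\log h_\phi=E(\tilde h)$. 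On the other hand, I claim the perturbation of $\phi\circ E$ by $E(\tilde h)$ coincides with its perturbation by $\tilde h$, hence equals $\psi\circ E$: indeed, by \cite[Theorem 3.7 (2)]{Ara78} (the same variational/monotonicity tool used in \Cref{lem:rel_Hamiltonian_cont}), $D(\cdot\Vert\phi\circ E)$ determines the perturbing Hamiltonian modulo the "off-$M$" part, and since everything is tested against states $\omega\circ E$ with $\omega\in M_\ast^+$, only $E(\tilde h)$ enters: for such states $\omega\circ E(\tilde h)=\omega(E(\tilde h))$. Thus $\omega\circ E=\psi\circ E$, so $\omega=\psi$ by faithfulness of $E_\ast$, giving $E(\tilde h)=\log h_\psi-\log h_\phi=h$, which is exactly (b).

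I expect the main obstacle to be making the "perturbation commutes with $E$" step fully rigorous at the level of Haagerup $L^1$-spaces rather than at the level of the Dyson series for states — one must check that the identification $L^1(M)\hookrightarrow L^1(\tilde M)$ intertwines $\log h_{(\cdot)}$ correctly and that $\log h_{\phi\circ E}$, which is an unbounded operator affiliated with the crossed product, interacts well with the bounded perturbation. A cleaner route, which I would actually pursue to avoid these issues, is to bypass perturbation theory and argue directly via the entropy identity: for every $\omega\in M_\ast^+$ we have, using (a) twice (for $\psi$ and for $\phi$ in the role of the reference state, noting $\omega\circ E$ and using $D(\omega\circ E\Vert\psi\circ E)=D(\omega\Vert\psi)$ and $D(\omega\circ E\Vert\phi\circ E)=D(\omega\Vert\phi)$),
\begin{equation*}
\omega\bigl(E(\tilde h)\bigr)=(\omega\circ E)(\tilde h)=D(\omega\circ E\Vert\phi\circ E)-D(\omega\circ E\Vert\psi\circ E)=D(\omega\Vert\phi)-D(\omega\Vert\psi)=\omega(h).
\end{equation*}
Since $\omega\in M_\ast^+$ is arbitrary and $M_\ast^+$ separates points of $M$, this forces $E(\tilde h)=h$, completing the proof. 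This reduces (b) to (a) plus the already-recorded identity $D(\omega\Vert\phi)=D(\omega\Vert\psi)+\omega(h)$ together with the analogous identity on $\tilde M$, and the only real content is justifying the middle equalities via the relative entropy chain rule on $\tilde M$ applied to the three states $\omega\circ E$, $\psi\circ E$, $\phi\circ E$, all of which lie in $B_\alpha(\phi\circ E)$ when $\omega\in B(\phi)$; for general $\omega\in M_\ast^+$ one approximates by $\omega_n\in B(\phi)$ using \Cref{lem:dense_pos_cone_Lp} and the weak$^\ast$ continuity of $\psi\mapsto\log h_\psi-\log h_\phi$ on norm-bounded sets from \Cref{lem:rel_Hamiltonian_cont_II}.
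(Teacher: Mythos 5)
Your ``cleaner route'' for (b) is exactly the paper's argument: test $E(\log h_{\psi\circ E}-\log h_{\phi\circ E})$ against $\omega\in B(\phi)$, use the identity $D(\omega\circ E\Vert\phi\circ E)=D(\omega\circ E\Vert\psi\circ E)+(\omega\circ E)(\tilde h)$ together with part (a) applied to both reference states, and conclude by density of $B(\phi)$ in $M_\ast^+$; part (a) via two applications of the data processing inequality is also the paper's proof. The preliminary excursion through Araki perturbation theory is an unnecessary detour that you rightly abandon, so the proposal is correct and essentially coincides with the paper's proof.
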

\begin{proof}
(a) Let $\iota\colon M\to N$ denote the inclusion map. By two applications of the data processing inequality (see \cite[Theorem 4.1]{Hia18}), we obtain
\begin{equation*}
D(\psi\Vert\phi)=D(\psi\circ E\circ\iota\Vert\psi\circ E\circ\iota)\leq D(\psi\circ E\Vert \phi\circ E)\leq D(\psi\Vert\phi).
\end{equation*}
(b) 
If $\omega\in M_\ast^+$, then
\begin{equation*}
D(\omega\circ E\Vert \psi\circ E)+\omega(E(\log h_{\psi\circ E}-\log h_{\phi\circ E}))=D(\omega\circ E\Vert\phi\circ E).
\end{equation*}
By the previous part, if $\omega\in B(\phi)$, then
\begin{equation*}
\omega(E(\log h_{\psi\circ E}-\log h_{\phi\circ E}))=D(\omega\Vert \phi)-D(\omega\Vert \psi)=\omega(\log h_\psi-\log h_\phi).
\end{equation*}
As $B(\phi)$ is dense in $M_\ast^+$, it follows that 
\begin{equation*}
E(\log h_{\psi\circ E}-\log h_{\phi\circ E})=\log h_\psi-\log h_\phi.\qedhere
\end{equation*}
\end{proof}

\begin{lemma}\label{lem:martingale_conv_entropy}
Let $(M_n)$ be an increasing sequence of unital von Neumann subalgebras of $M$ such that the union $\bigcup_n M_n$ is weak$^\ast$ dense in $M$.
\begin{itemize}
\item[(a)] If $\psi\in M_\ast^+$, then
\begin{equation*}
D(\psi|_{M_n}\Vert \phi|_{M_n})\nearrow D(\psi\Vert \phi).
\end{equation*}
\item[(b)] If $\psi\in B(\phi)$, then
\begin{equation*}
\log h_{\psi_n}-\log h_{\phi_n}\to \log h_\phi-\log h_\psi
\end{equation*}
in the weak$^\ast$ topology. Here $\psi_n$ (resp. $\phi_n$) denotes the restriction of $\psi$ (resp. $\phi)$ to $M_n$.
\end{itemize}
\end{lemma}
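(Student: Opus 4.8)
The plan is to prove (a) first, by the classical two steps (monotonicity produces a limit $\le D(\psi\Vert\phi)$, lower semicontinuity forces equality), and then to deduce (b) from (a) using the additivity identity $D(\omega\Vert\phi)=D(\omega\Vert\psi)+\omega(\log h_\psi-\log h_\phi)$ recalled before \Cref{lem:rel_Hamiltonian_cont_II}.

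\emph{Part (a).} Restriction along the inclusion $M_m\hookrightarrow M_n$ (for $m\le n$), resp.\ $M_n\hookrightarrow M$, is a unital normal $\ast$-homomorphism, hence a channel, so the data processing inequality \cite[Theorem 4.1]{Hia18} gives $D(\psi|_{M_m}\Vert\phi|_{M_m})\le D(\psi|_{M_n}\Vert\phi|_{M_n})\le D(\psi\Vert\phi)$. Thus the sequence is nondecreasing and $L:=\lim_n D(\psi|_{M_n}\Vert\phi|_{M_n})\le D(\psi\Vert\phi)$. For the converse I would invoke Kosaki's variational formula, which writes $D(\psi\Vert\phi)$ as a supremum of expressions of the form $F(\psi(a_1),\dots,\psi(a_k),\phi(b_1),\dots,\phi(b_l))$, where $F$ is continuous and the $a_i,b_j\in M$ are fixed quadratic expressions in the values, on a bounded interval, of a step function with values in a bounded subset of $M$; moreover, restricting the step functions to take values in a subalgebra $N\subseteq M$ reproduces exactly Kosaki's formula for $D(\psi|_N\Vert\phi|_N)$. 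Given $\varepsilon>0$, choose one such expression with value $>D(\psi\Vert\phi)-\varepsilon$, realized by step-function values in $M$; by Kaplansky's density theorem these values can be approximated, within the relevant bounded set and in the strong$^\ast$ topology, by elements of $\bigcup_m M_m$, and since $\psi$ and $\phi$ are normal the value of the expression is continuous under this approximation. Hence for $m$ large the corresponding expression for $D(\psi|_{M_m}\Vert\phi|_{M_m})$ exceeds $D(\psi\Vert\phi)-2\varepsilon$, so $L\ge D(\psi\Vert\phi)-2\varepsilon$; letting $\varepsilon\to0$ gives $L=D(\psi\Vert\phi)$. (This increasing-martingale convergence of Umegaki's relative entropy is also available from the standard references on relative entropy in von Neumann algebras.) The same argument works verbatim with $\phi$ replaced by any faithful normal positive functional, which is the form used in (b).

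\emph{Part (b).} Fix $\alpha\ge1$ with $\psi\in B_\alpha(\phi)$. Restricting $\alpha^{-1}\phi\le\psi\le\alpha\phi$ to $M_n$ gives $\psi_n\in B_\alpha(\phi_n)$, so $h_n:=\log h_{\psi_n}-\log h_{\phi_n}$ lies in $M_n\subseteq M$ with $\Vert h_n\Vert\le\log\alpha$, and likewise $h:=\log h_\psi-\log h_\phi\in M$ with $\Vert h\Vert\le\log\alpha$. Since $(h_n)$ is norm bounded and $\operatorname{span}B(\phi)$ is norm dense in $M_\ast$ (as $B(\phi)$ is dense in $M_\ast^+$ by \Cref{lem:dense_pos_cone_Lp}), it suffices to show $\omega(h_n)\to\omega(h)$ for every $\omega\in B(\phi)$. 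Fix such $\omega$, say $\omega\in B_\beta(\phi)$; then also $\omega\in B_{\alpha\beta}(\psi)$, so $D(\omega\Vert\phi)$, $D(\omega\Vert\psi)$ and their counterparts computed in the $M_n$ are all finite. Writing $\omega_n:=\omega|_{M_n}$ and using $h_n\in M_n$ together with the additivity identity applied inside $M_n$,
\begin{equation*}
\omega(h_n)=\omega_n(h_n)=D(\omega_n\Vert\phi_n)-D(\omega_n\Vert\psi_n),
\end{equation*}
and by part (a) (applied to the pairs $(\omega,\phi)$ and $(\omega,\psi)$) the two terms on the right converge to the finite limits $D(\omega\Vert\phi)$ and $D(\omega\Vert\psi)$. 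Hence $\omega(h_n)\to D(\omega\Vert\phi)-D(\omega\Vert\psi)=\omega(h)$, which is the claim (with the obvious sign in the statement, $\log h_{\psi_n}-\log h_{\phi_n}\to\log h_\psi-\log h_\phi$).

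The main obstacle is the lower-semicontinuity half of (a): because $\psi|_{M_n}$ is a functional on $M_n$ and no $\phi$-preserving conditional expectation onto $M_n$ is assumed, one cannot directly appeal to joint lower semicontinuity of the relative entropy on $M$, and the variational-formula-plus-Kaplansky argument is exactly what bridges this gap. Once (a) is in place, (b) is routine bookkeeping with the order bounds $B_\alpha(\cdot)$, the additivity identity, and the uniform norm bound on $(h_n)$ that legitimizes testing weak$^\ast$ convergence only against the total set $B(\phi)$.
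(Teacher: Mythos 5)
Your proof is correct and follows essentially the same route as the paper: part (a) is the standard martingale convergence of the relative entropy (which the paper simply cites as \cite[Theorem 4.1]{Hia18}, whereas you sketch its usual proof via Kosaki's variational formula and Kaplansky density), and part (b) is exactly the paper's argument --- restrict the order bounds to get $\psi_n\in B_\alpha(\phi_n)$ and the uniform bound $\norm{h_n}\leq\log\alpha$, test against $\omega\in B(\phi)$ using the additivity identity, and apply (a) to both pairs $(\omega,\phi)$ and $(\omega,\psi)$. You also correctly flag the two points the paper leaves implicit: that (a) must be applied with the second argument $\psi$ rather than $\phi$, and the sign typo in the statement of (b).
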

\begin{proof}
(a) This is known as continuity of the relative entropy under martingale convergence, see \cite[Theorem 4.1]{Hia18}.

(b) Note that if $\psi\in B_{\alpha}(\phi)$, then $\alpha^{-1} \phi_n\leq \psi_n\leq \alpha \phi_n$ for all $n\in\mathbb N$. In other words, $\psi_n\in B_\alpha(\phi_n)$ for all $n\in\mathbb N$.

By the first part, if $\omega\in B(\phi)$, then
\begin{align*}
\omega(\log h_{\psi_n}-\log h_{\phi_n})&=\omega|_{M_n}(\log h_{\psi_n}-\log h_{\phi_n})\\
&=D(\omega|_{M_n}\Vert \phi|_{M_n})-D(\omega|_{M_n}\Vert \psi|_{M_n})\\
&\to D(\omega\Vert \phi)-D(\omega\Vert\psi)\\
&=\omega(\log h_\psi-\log h_\phi).
\end{align*}
Since $B(\phi)$ is dense in $M_\ast^+$ and $\norm{\log h_{\psi_n}-\log h_{\phi_n}}\leq \log\alpha$ for all $n\in\mathbb N$ if $\psi\in B_\alpha(\phi)$, it follows that $\log h_{\psi_n}-\log h_{\phi_n}\to \log h_{\psi}-\log h_\phi$ in the weak$^\ast$ topology.
\end{proof}

Let us now briefly recall Haagerup's reduction method. A detailed account can be found in \cite{HJX10}.

Let $G=\bigcup_{n\in\IN}2^{-n}\IZ$ viewed as a discrete group. The modular group $\sigma^\phi$ restricts to an action of $G$ on $M$ and we can form the crossed product $M\rtimes_{\sigma^\phi}G$, which we denote by $\tilde M$. Since $G$ is discrete, the dual weight $\tilde\phi$ of $\phi$ is a state and there exists a (unique) normal faithful conditional expectation $E\colon \tilde M\to M$ such that $\tilde\phi=\phi\circ E$. Moreover, there exists an increasing sequence $(M_n)$ of finite von Neumann subalgebras of $\tilde M$ such that $\bigcup_n M_n$ is weak$^\ast$ dense in $\tilde M$. For the purposes of the present section, the construction of the algebra $M_n$ is irrelevant, but we will give some details later in Section \ref{sec:MLSI_intertwining}.

\begin{proposition}\label{prop:deriv_entropy}
Let $(\psi_t)_{t\geq 0}$ be a continuously differentiable curve in $M_\ast$ such that $\psi_t(1)=\psi_0(1)$ for all $t\geq 0$ and for which there exists $\alpha\geq 1$ such that $\psi_t\in B_\alpha(\phi)$ for all $t\geq 0$.

The map
\begin{equation*}
f\colon [0,\infty)\to \IR,\,t\mapsto D(\psi_t\Vert \phi)
\end{equation*}
is differentiable with derivative
\begin{equation*}
f'(t)=\dot\psi_t(\log h_{\psi_t}-\log h_\phi).
\end{equation*}
\end{proposition}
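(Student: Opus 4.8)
The plan is to reduce to the finite case, Proposition~\ref{prop:deriv_entropy_semifinite}, by applying Haagerup's reduction method to the curve $(\psi_t)$ itself. First I would form $\tilde M=M\rtimes_{\sigma^\phi}G$ with its faithful normal conditional expectation $E\colon\tilde M\to M$ and dual state $\tilde\phi=\phi\circ E$, and lift the curve by setting $\tilde\psi_t=\psi_t\circ E$. Since $E_\ast\colon M_\ast\to\tilde M_\ast$, $\omega\mapsto\omega\circ E$, is bounded and linear, $(\tilde\psi_t)$ is a continuously differentiable curve in $\tilde M_\ast$ with $\dot{\tilde\psi}_t=\dot\psi_t\circ E$. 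Because $E$ is positive and unital, $\psi_t\in B_\alpha(\phi)$ forces $\tilde\psi_t\in B_\alpha(\tilde\phi)$, and $\tilde\psi_t(1)=\psi_t(1)$ is independent of $t$. By Lemma~\ref{lem:entropy_extension}(a), $D(\tilde\psi_t\Vert\tilde\phi)=D(\psi_t\Vert\phi)=f(t)$.

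Next I would pass to the increasing sequence $(M_n)$ of finite von Neumann subalgebras of $\tilde M$ provided by the reduction, whose union is weak$^\ast$ dense in $\tilde M$, and write $\psi_{t,n}=\tilde\psi_t|_{M_n}$, $\phi_n=\tilde\phi|_{M_n}$. Restriction being bounded and linear, $t\mapsto\psi_{t,n}$ is continuously differentiable in $(M_n)_\ast\cong L^1(M_n,\tau_n)$, with $\psi_{t,n}\in B_\alpha(\phi_n)$, $\psi_{t,n}(1)$ constant in $t$, and $\phi_n$ faithful (so $h_{\phi_n}$ non-singular). Hence Proposition~\ref{prop:deriv_entropy_semifinite} applies on $M_n$ and shows that $f_n(t):=\tau_n\big(h_{\psi_{t,n}}(\log h_{\psi_{t,n}}-\log h_{\phi_n})\big)=D(\psi_{t,n}\Vert\phi_n)$ is differentiable with $f_n'(t)=\dot\psi_{t,n}(\log h_{\psi_{t,n}}-\log h_{\phi_n})$.

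Then I would let $n\to\infty$, following the scheme of the proof of Proposition~\ref{prop:deriv_entropy_semifinite}. Pointwise convergence $f_n(t)\nearrow f(t)$ is Lemma~\ref{lem:martingale_conv_entropy}(a) together with the previous step. Since $\dot\psi_{t,n}=\dot{\tilde\psi}_t|_{M_n}$ and $\log h_{\psi_{t,n}}-\log h_{\phi_n}\in M_n\subset\tilde M$, one has $f_n'(t)=\dot{\tilde\psi}_t(\log h_{\psi_{t,n}}-\log h_{\phi_n})$; by Lemma~\ref{lem:martingale_conv_entropy}(b) the second factor converges weak$^\ast$ in $\tilde M$ to $\log h_{\tilde\psi_t}-\log h_{\tilde\phi}$, and $\dot{\tilde\psi}_t$ is normal, so
\begin{equation*}
f_n'(t)\to\dot{\tilde\psi}_t(\log h_{\tilde\psi_t}-\log h_{\tilde\phi})=\dot\psi_t\big(E(\log h_{\tilde\psi_t}-\log h_{\tilde\phi})\big)=\dot\psi_t(\log h_{\psi_t}-\log h_\phi),
\end{equation*}
the last equality by Lemma~\ref{lem:entropy_extension}(b). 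By Lemma~\ref{lem:diff_log_bounded}, $\norm{\log h_{\psi_{t,n}}-\log h_{\phi_n}}\le\log\alpha$, so $\abs{f_n'(t)}\le(\log\alpha)\norm{\dot\psi_t}$, which is bounded on compact $t$-intervals since $t\mapsto\dot\psi_t$ is norm-continuous; and the limit function $t\mapsto\dot\psi_t(\log h_{\psi_t}-\log h_\phi)$ is continuous by the argument used in Proposition~\ref{prop:deriv_entropy_semifinite} (norm continuity of $t\mapsto\dot\psi_t$ combined with the bounded weak$^\ast$ continuity of $t\mapsto\log h_{\psi_t}-\log h_\phi$ from Lemma~\ref{lem:rel_Hamiltonian_cont_II}). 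Passing to the limit in $f_n(t)-f_n(s)=\int_s^t f_n'(r)\,dr$ by dominated convergence then gives $f(t)-f(s)=\int_s^t\dot\psi_r(\log h_{\psi_r}-\log h_\phi)\,dr$, and since the integrand is continuous, $f$ is continuously differentiable with the asserted derivative.

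The bulk of the proof is bookkeeping across the three layers $M\subset\tilde M$ and $M_n\subset\tilde M$: one must check that continuous differentiability, the mass constraint, and the two-sided bound $B_\alpha(\cdot)$ all survive both the lift $\omega\mapsto\omega\circ E$ and the restriction to $M_n$, and that $f(t)$ is correctly recognized as $D(\tilde\psi_t\Vert\tilde\phi)$ via Lemma~\ref{lem:entropy_extension}(a) rather than as something intrinsic to $M$. I expect the one genuinely non-formal step to be the identification $\lim_n f_n'(t)=\dot\psi_t(\log h_{\psi_t}-\log h_\phi)$, which hinges on the weak$^\ast$ martingale convergence of the relative Hamiltonians (Lemma~\ref{lem:martingale_conv_entropy}(b)), the module-type identity $E(\log h_{\psi_t\circ E}-\log h_{\phi\circ E})=\log h_{\psi_t}-\log h_\phi$ (Lemma~\ref{lem:entropy_extension}(b)), and the observation that pairing the normal functional $\dot\psi_t$ against $E(\cdot)$ equals pairing $\dot\psi_t\circ E$ against $(\cdot)$; a subsidiary point is that Lemmas~\ref{lem:entropy_extension} and~\ref{lem:martingale_conv_entropy} must be invoked with ambient algebra $\tilde M$, state $\tilde\phi$, and subalgebras $M_n$.
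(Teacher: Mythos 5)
Your proposal follows essentially the same route as the paper's proof: lift the curve via $\omega\mapsto\omega\circ E$ to the crossed product, restrict to the finite subalgebras $M_n$, apply Proposition~\ref{prop:deriv_entropy_semifinite} there, and pass to the limit using Lemma~\ref{lem:entropy_extension}, Lemma~\ref{lem:martingale_conv_entropy}, the uniform bound $\log\alpha$ on the relative Hamiltonians, and dominated convergence, with Lemma~\ref{lem:rel_Hamiltonian_cont_II} giving continuity of the limiting derivative. The argument is correct as outlined and matches the paper's proof step for step.
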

\begin{proof}
We keep the notation from the previous discussion. For $\omega\in M_\ast$ let $\tilde\omega=\omega\circ E$, which is consistent with the notation for the dual weight.

In particular, $(\tilde \psi_t)$ is a continuously differentiable curve in $\tilde M_\ast$ such that $\tilde \psi_t(1)=\tilde\psi_0(1)$ and $\tilde\psi_t\in B_\alpha(\tilde\phi)$ for all $t\geq 0$. Moreover, $\frac{d}{dt}\tilde\psi_t=\dot\psi_t\circ E$.

By Lemma \ref{lem:entropy_extension} we have $D(\tilde \psi_t\Vert \tilde \phi)=D(\psi_t\Vert \phi)$ and 
\begin{equation*}
\dot\psi_t(\log h_{\psi_t}-\log h_\phi)=\left(\frac{d}{dt}\tilde\psi_t\right)(\log h_{\tilde\psi_t}-\log h_{\tilde\phi}).
\end{equation*}
Further, let $\omega^n=\tilde\omega|_{M_n}$ for $\omega\in M_\ast$. Again, $(\psi^n_t)_{t\geq 0}$ is a continuously differentiable curve in $(M_n)_\ast$ such that $\psi^n_t(1)=\psi^n_0(1)$ and $\psi^n_t\in B_\alpha(\phi^n)$ for all $t\geq 0$ and $\dot\psi^n_t=\dot\psi_t\circ E|_{M_n}$.

By Lemma \ref{lem:martingale_conv_entropy}, $D(\psi^n_t\Vert \phi^n)\nearrow D(\tilde\psi_t\Vert \tilde\phi)=D(\psi_t\Vert \phi)$ and
\begin{align*}
\dot\psi^n_t(\log h_{\psi^n_t}-\log h_{\phi^n})&=\dot \psi_t(E(\log h_{\psi^n_t}-\log h_{\phi^n}))\\
&\to \dot\psi_t(E(\log h_{\tilde \psi_t}-\log h_{\tilde\phi}))\\
&=\dot\psi_t(\log h_{\psi_t}-\log h_\phi).
\end{align*}
Furthermore,
\begin{equation*}
\abs{\dot\psi^n_t(\log h_{\psi^n_t}-\log h_{\phi^n})}\leq \norm{\dot\psi_t}_1\log \alpha.
\end{equation*}
Since $M_n$ is finite, we can apply Proposition \ref{prop:deriv_entropy_semifinite} and get
\begin{equation*}
D(\psi^n_t\Vert \phi)-D(\psi^n_s\Vert \phi)=\int_s^t \dot \psi^n_r(\log h_{\psi^n_r}-\log h_{\phi^n})\,dr.
\end{equation*}
By the previous discussion, the left side converges to $D(\psi_t\Vert \phi)-D(\psi_s\Vert\phi)$, while the right side converges to $\int_s^t \dot\psi_r(\log h_{\psi_r}-\log h_\phi)\,dr$ by the dominated convergence theorem. As $r\mapsto \dot\psi_r(\log h_{\psi_r}-\log h_\phi)\,dr$ is continuous by Lemma \ref{lem:rel_Hamiltonian_cont_II}, this implies the differentiability of $f$ with the desired derivative.
\end{proof}

Now let $(P_t)$ be a quantum Markov semigroup on $M$ and let $\L_\ast$ denote the generator of $(P_{t\ast})$. For $\psi\in B(\phi)\cap \dom(\L_\ast)$ we define the \emph{entropy production} of $\psi$ with respect to $\phi$ as
\begin{equation*}
\I_\L(\psi\Vert \phi)=\L_\ast(\psi)(\log h_\psi-\log h_\phi)=\tr(\L_\ast(h_\psi)(\log h_\psi-\log h_\phi)).
\end{equation*}
The entropy production for open quantum systems was first studied by Spohn. We refer to his article \cite{Spo78} for the physical interpretation of this quantity.

\begin{theorem}[De Bruijn's identity -- $\sigma$-finite case]\label{thm:DeBruijn_general}
Let $M$ be a von Neumann algebra, $\phi$ a faithful normal state on $M$ and $(P_t)$ a quantum Markov semigroup on $M$ such that $P_{t\ast}(\phi)=\phi$ for all $t\geq 0$.

For $\psi\in B(\phi)\cap \dom(\L_\ast)$ and $t\geq 0$, one has
\begin{equation*}
\frac{d}{dt}D(P_{t\ast}(\psi)\Vert \phi)=-\I_\L(P_{t\ast}(\psi)\Vert \phi).
\end{equation*}
\end{theorem}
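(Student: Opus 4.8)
The plan is to deduce this directly from \Cref{prop:deriv_entropy}, applied to the trajectory $\psi_t:=P_{t\ast}(\psi)$; this parallels the passage from \Cref{prop:deriv_entropy_semifinite} to \Cref{thm:DeBruijn_semifinite} in the finite case, so the only real task is to check that $(\psi_t)_{t\ge 0}$ satisfies the three hypotheses of that proposition.

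First, since $(P_{t\ast})$ is a strongly continuous semigroup on $M_\ast$ with generator (up to sign) $\L_\ast$ and $\psi\in\dom(\L_\ast)$, the semigroup property gives $P_{t\ast}(\psi)\in\dom(\L_\ast)$ for all $t\ge 0$ together with $\frac{d}{dt}P_{t\ast}(\psi)=-\L_\ast(P_{t\ast}(\psi))=-P_{t\ast}(\L_\ast(\psi))$; strong continuity of $(P_{t\ast})$ then shows $t\mapsto\L_\ast(P_{t\ast}(\psi))$ is norm continuous, so $(\psi_t)$ is a continuously differentiable curve in $M_\ast$. Second, since each $P_t$ is unital, $\psi_t(1)=\psi(P_t(1))=\psi(1)=\psi_0(1)$ for all $t\ge 0$. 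Third, picking $\alpha\ge 1$ with $\psi\in B_\alpha(\phi)$ and using that $P_{t\ast}$ is positive, hence order preserving, and fixes $\phi$, we get $\alpha^{-1}\phi=\alpha^{-1}P_{t\ast}(\phi)\le P_{t\ast}(\psi)\le\alpha P_{t\ast}(\phi)=\alpha\phi$, i.e.\ $\psi_t\in B_\alpha(\phi)$ for all $t\ge 0$. In particular $\psi_t\in B(\phi)\cap\dom(\L_\ast)$, so $\I_\L(\psi_t\Vert\phi)$ is well-defined.

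With these three properties verified, \Cref{prop:deriv_entropy} applies to $(\psi_t)$ and yields
\begin{align*}
\frac{d}{dt}D(P_{t\ast}(\psi)\Vert\phi)&=\left(\frac{d}{dt}P_{t\ast}(\psi)\right)(\log h_{P_{t\ast}(\psi)}-\log h_\phi)\\
&=-\L_\ast(P_{t\ast}(\psi))(\log h_{P_{t\ast}(\psi)}-\log h_\phi),
\end{align*}
and the right-hand side is by definition precisely $-\I_\L(P_{t\ast}(\psi)\Vert\phi)$.

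I do not expect a genuine obstacle here: \Cref{prop:deriv_entropy}, whose proof via the Haagerup reduction method carries all of the analytic difficulty, does the heavy lifting, and this theorem is the harvesting step. The only points requiring a little care are the norm continuity of $t\mapsto\L_\ast(P_{t\ast}(\psi))$ (standard $C_0$-semigroup theory) and the order preservation of $P_{t\ast}$ together with its fixing $\phi$, which together give $\psi_t\in B_\alpha(\phi)$ for a single $\alpha$ uniform in $t$.
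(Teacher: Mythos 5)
Your proof is correct and follows essentially the same route as the paper: verify that $\psi_t=P_{t\ast}(\psi)$ is a continuously differentiable curve with $\psi_t(1)$ constant and $\psi_t\in B_\alpha(\phi)$ uniformly in $t$, then invoke \Cref{prop:deriv_entropy}. The only cosmetic difference is that you check mass conservation via unitality of $P_t$ directly rather than via $\L(1)=0$, which is the same fact.
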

\begin{proof}
For $t\geq 0$ let $\psi_t=P_{t\ast}(\psi)$. By definition of the generator, the curve $(\psi_t)$ is continuously differentiable in $M_\ast$. Moreover, $\frac{d}{dt}\psi_t(1)=\psi_t(\L(1))=0$ for all $t\geq 0$. Finally, 
\begin{equation*}
\alpha^{-1}\phi=\alpha^{-1}P_{t\ast}(\phi)\leq P_{t\ast}(\psi)\leq \alpha P_{t\ast}(\phi)=\alpha\phi
\end{equation*}
for all $t\geq 0$ since $P_t$ is positivity-preserving and $\phi$ is invariant under $P_t$.

Hence the claim follows from Proposition \ref{prop:deriv_entropy}.
\end{proof}

If it exists, an invariant state of a quantum Markov semigroup is not necessarily unique. To get decay bounds of the relative entropy in this case, one needs to choose the reference state depending on the input state. For this purpose, let us introduce the conditional expectation onto the fixed-point algebra.

Let $(P_t)$ be a quantum Markov semigroup such that $P_{t\ast}(\phi)=\phi$ for all $t\geq 0$. The fixed-point algebra $N$ of $(P_t)$ is given by 
\begin{equation*}
N=\{x\in M\mid P_t(x)=x\text{ for all }t\geq 0\}.
\end{equation*}
As $\phi$ is faithful, an application of the Kadison--Schwarz inequality shows that $N$ is a von Neumann subalgebra of $M$ (see \cite[Section 2]{Wat79}). Moreover, for every $x\in M$ the strong limit
\begin{equation*}
E(x)=\lim_{t\to\infty}\frac 1 t\int_0^t P_s(x)\,ds
\end{equation*}
exists, belongs to $N$, and $E$ defines a $\phi$-preserving normal faithful conditional expectation of $M$ onto $N$. The predual of $E$ is given by
\begin{equation*}
E_\ast(\omega)=\lim_{t\to\infty}\frac 1 t\int_0^t P_{s\ast}(\omega)\,ds
\end{equation*}
for $\omega\in M_\ast$, where the limit exists in the norm topology, and $E_\ast$ is a norm-one projection onto $\{\omega\in M_\ast\mid P_{t\ast}(\omega)=\omega\text{ for all }t\geq 0\}$. See \cite[Theorem A]{Wat79} for a proof of these facts.

\begin{theorem}\label{thm:main}
Let $M$ be a von Neumann algebra, $\phi$ a faithful normal state on $M$ and $(P_t)$ a quantum Markov semigroup such that $P_{t\ast}(\phi)=\phi$ for all $t\geq 0$. Let $E$ denote the $\phi$-preserving faithful normal conditional expectation onto the fixed-point algebra of $(P_t)$. For $\beta>0$, the following assertions are equivalent:
\begin{itemize}
\item[(i)] $\beta D(\psi\Vert E_\ast(\psi))\leq \I_\L(\psi\Vert E_\ast(\psi))$ for all $\psi\in \dom(\L_\ast)$ with $E_\ast(\psi)\in B(\psi)$ faithful,
\item[(ii)]$D(P_{t\ast}(\psi)\Vert E_\ast(\psi))\leq e^{-\beta t}D(\psi\Vert E_\ast(\psi))$ for all $\psi\in \dom(\L_\ast)$ with $E_\ast(\psi)\in B(\psi)$ faithful, $t\geq 0$,
\item[(iii)]$D(P_{t\ast}(\psi)\Vert E_\ast(\psi))\leq e^{-\beta t}D(\psi\Vert E_\ast(\psi))$ for all $\psi\in M_\ast^+$, $t\geq 0$.
\end{itemize}
\end{theorem}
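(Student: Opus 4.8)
The plan is to establish (i) $\Leftrightarrow$ (ii) from the deBruijn identity \Cref{thm:DeBruijn_general} together with a Grönwall argument, and then to obtain (ii) $\Leftrightarrow$ (iii) by observing that the states appearing in (ii) form a subclass of $M_\ast^+$ that is rich enough to force the general estimate by approximation.

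For (i) $\Leftrightarrow$ (ii), fix $\psi$ in the class appearing in (i) and (ii) and set $\omega=E_\ast(\psi)$. Then $\omega$ is a faithful invariant state, $E_\ast(P_{t\ast}\psi)=\omega$ for all $t\ge 0$ because $E_\ast\circ P_{t\ast}=E_\ast$, the condition $E_\ast(\psi)\in B_\alpha(\psi)$ is symmetric in $\psi$ and $\omega$ so that $\psi\in B_\alpha(\omega)$, and hence $P_{t\ast}\psi\in B_\alpha(\omega)\cap\dom(\L_\ast)$ for all $t\ge 0$ by positivity of $P_{t\ast}$ and $P_{t\ast}\omega=\omega$. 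Applying \Cref{thm:DeBruijn_general} with reference state $\omega$ to the curve $(P_{t\ast}\psi)_{t\ge 0}$ shows that $g\colon t\mapsto D(P_{t\ast}\psi\Vert\omega)$ is finite-valued and differentiable with $g'(t)=-\I_\L(P_{t\ast}\psi\Vert\omega)$. Assuming (i) and applying it to $P_{t\ast}\psi$ — which again lies in the relevant class with $E_\ast(P_{t\ast}\psi)=\omega$ — yields $g'(t)\le-\beta g(t)$, so $t\mapsto e^{\beta t}g(t)$ is non-increasing and (ii) follows. Conversely, assuming (ii) we have $g(t)\le e^{-\beta t}g(0)$ with equality at $t=0$, hence $g'(0)\le-\beta g(0)$; inserting $g'(0)=-\I_\L(\psi\Vert\omega)$ and $g(0)=D(\psi\Vert E_\ast\psi)$ gives exactly (i).

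Since the states in (ii) lie in $M_\ast^+$, the implication (iii) $\Rightarrow$ (ii) is trivial, so the task is (ii) $\Rightarrow$ (iii). Let $\psi\in M_\ast^+$; we may assume $\psi(1)=1$, and, as the assertion is vacuous when $D(\psi\Vert E_\ast\psi)=\infty$, that $D(\psi\Vert E_\ast\psi)<\infty$. I would first reduce to the case that $\omega:=E_\ast(\psi)$ is faithful: for $\epsilon\in(0,1)$ put $\psi_\epsilon=(1-\epsilon)\psi+\epsilon\phi$, so $E_\ast(\psi_\epsilon)=(1-\epsilon)\omega+\epsilon\phi\ge\epsilon\phi$ is faithful and $\psi_\epsilon\to\psi$, $P_{t\ast}\psi_\epsilon\to P_{t\ast}\psi$ in norm as $\epsilon\to 0$. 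If (iii) is known for each $\psi_\epsilon$, then it follows for $\psi$ on passing to the limit: lower semicontinuity of the relative entropy controls the left-hand side, while joint convexity and $D(\phi\Vert\phi)=0$ give $D(\psi_\epsilon\Vert E_\ast\psi_\epsilon)\le(1-\epsilon)D(\psi\Vert\omega)$ (in particular this is finite) on the right.

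It remains to prove (iii) for $\psi$ with $\omega=E_\ast(\psi)$ faithful and $D(\psi\Vert\omega)<\infty$, which I would do by approximating $\psi$ in norm by states in the class of (ii). Observe that any state $\rho$ with $c^{-1}\omega\le\rho\le c\omega$ automatically has $E_\ast(\rho)\in B_c(\omega)$ (since $E_\ast\omega=\omega$), hence $E_\ast(\rho)$ is faithful and commensurable with $\rho$; and that $\rho\mapsto k(\L_\ast+k)^{-1}\rho=\int_0^\infty ke^{-kt}P_{t\ast}\rho\,dt$ preserves the bound $c^{-1}\omega\le\,\cdot\,\le c\omega$, maps into $\dom(\L_\ast)$, leaves $E_\ast(\,\cdot\,)$ unchanged (as $E_\ast\circ k(\L_\ast+k)^{-1}=E_\ast$), and by joint convexity and the data processing inequality does not increase $D(\,\cdot\,\Vert\omega)$. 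In $M\rtimes_{\sigma^\phi}\IR$, where $h_\psi,h_\omega$ are affiliated operators and $h_\omega$ is non-singular, one can then truncate $\psi$ towards $\omega$: with $r$ the positive affiliated operator such that $h_\omega^{1/2}rh_\omega^{1/2}=h_\psi$, replace $h_\psi$ by $h_\omega^{1/2}\min(r,c_n)h_\omega^{1/2}$ for $c_n\to\infty$, reinject the excess mass through $E_\ast$ (which keeps it dominated by $\omega$ and restores $E_\ast(\,\cdot\,)=\omega$) and mix in a vanishing multiple of $\omega$, obtaining states $\psi_n$ that are commensurable with $\omega$, satisfy $E_\ast(\psi_n)=\omega$ and $\psi_n\to\psi$ in norm, and for which $\limsup_n D(\psi_n\Vert\omega)\le D(\psi\Vert\omega)$. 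After applying the resolvent smoothing to land in $\dom(\L_\ast)$ — which costs nothing in view of the properties just listed — the $\psi_n$ belong to the class of (ii), so (ii) gives $D(P_{t\ast}\psi_n\Vert\omega)\le e^{-\beta t}D(\psi_n\Vert\omega)$; letting $n\to\infty$, lower semicontinuity of the relative entropy on the left and the $\limsup$ bound on the right yield (iii). I expect the main obstacle to be precisely this last $\limsup$ bound, i.e.\ establishing continuity (not merely lower semicontinuity) of the relative entropy along the truncation $\psi_n\to\psi$: in the commutative case this follows from integrability of $f\log(f/g)$ and dominated convergence, while in general it is where the argument must invoke the continuity result of \cite{FHSW22}.
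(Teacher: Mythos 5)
Your treatment of (i) $\Leftrightarrow$ (ii) and the trivial direction (iii) $\Rightarrow$ (ii) is exactly the paper's argument: deBruijn plus Gr\"onwall one way, differentiation at $t=0$ the other way, using that $E_\ast\circ P_{t\ast}=E_\ast$ and that two-sided commensurability is preserved along the flow. The reduction of (ii) $\Rightarrow$ (iii) to the case of faithful $E_\ast(\psi)$ via $\psi_\epsilon=(1-\epsilon)\psi+\epsilon\phi$ and joint convexity plus lower semicontinuity, and the resolvent smoothing $k(\L_\ast+k)^{-1}$ to land in $\dom(\L_\ast)$ while preserving commensurability and not increasing the entropy, also match the paper (the paper mixes with $E_\ast(\psi)$ rather than $\phi$ in the intermediate step, and does the faithfulness reduction last, but these are immaterial reorderings).

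The one genuine gap is in your final approximation step, and you half-identify it yourself. You try to build approximants $\psi_n$ that are two-sided commensurable with $\omega=E_\ast(\psi)$ \emph{and} satisfy $E_\ast(\psi_n)=\omega$ exactly, by truncating $h_\psi$ relative to $h_\omega$ and ``reinjecting the excess mass through $E_\ast$.'' Two problems: first, making sense of the operator $r$ with $h_\omega^{1/2}rh_\omega^{1/2}=h_\psi$ and its truncations in a general (possibly type III) von Neumann algebra, and proving convergence of the relative entropies along the truncation, is precisely the nontrivial content of \cite[Lemma 5, Theorem 5]{FHSW22} --- you cannot treat it as a routine computation; second, after adding back $\omega-E_\ast(\rho_n)$ the bound $\limsup_n D(\psi_n\Vert\omega)\leq D(\psi\Vert\omega)$ no longer follows from convexity (the sum $\rho_n+(\omega-E_\ast(\rho_n))$ is not a convex combination), and you offer no argument for it. The paper avoids forcing $E_\ast(\psi_n)=\omega$ altogether: it takes the \cite{FHSW22} approximants, which satisfy only the one-sided bound $\psi_n\leq\alpha_n E_\ast(\psi)$ together with $D(\psi_n\Vert E_\ast(\psi))\to D(\psi\Vert E_\ast(\psi))$, and then handles the \emph{moving} reference state via the Petz chain rule
\begin{equation*}
D(\psi_n\Vert E_\ast(\psi))=D(\psi_n\Vert E_\ast(\psi_n))+D(E_\ast(\psi_n)\Vert E_\ast(\psi))\geq D(\psi_n\Vert E_\ast(\psi_n)),
\end{equation*}
which yields $\limsup_n D(\psi_n\Vert E_\ast(\psi_n))\leq D(\psi\Vert E_\ast(\psi))$ for free; lower semicontinuity on the left then closes the argument. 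This chain-rule step, together with an intermediate upgrade from one-sided to two-sided domination by mixing with $E_\ast(\psi)$, is the missing idea in your write-up.
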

\begin{proof}
Since $E_\ast(P_{t\ast}(\psi))=E_\ast(\psi)$ for all $t\geq 0$, the implication  (i) $\implies$ (ii) now follows from Theorem \ref{thm:DeBruijn_general} with $\phi=E_\ast(\psi)$ by an application of Grönwall's lemma.

For (ii)$\implies$(i), note that $\I_\L(\psi\Vert E_\ast(\psi))=-\frac{d}{dt}|_{t=0}D(P_{t\ast}(\psi)\Vert E_\ast(\psi))$ by \Cref{thm:DeBruijn_general}. By (ii),
\begin{align*}
    \I_\L(\psi\Vert E_\ast(\psi))&=\lim_{t\to 0}\frac 1 t(D(\psi\Vert E_\ast(\psi))-D(P_{t\ast}(\psi)\Vert E_\ast(\psi)))\\
    &\geq \lim_{t\to 0}\frac {1-e^{-\beta t}}{t}D(\psi\Vert E_\ast(\psi))\\
    &=\beta D(\psi\Vert E_\ast(\psi)),
\end{align*}
which settles (i).

Clearly (iii) implies (ii). To prove that (ii) implies (iii), we proceed by a number of reductions. First, if $\psi\in M_\ast^+$, let $\psi_{n,t}=n P_{t\ast}(\L_\ast+n)^{-1}(\psi)$. Since $n(\L+n)^{-1}$ is a normal unital completely positive map that commutes with $P_t$, we have by the data processing inequality and lower semicontinuity of the entropy
\begin{align*}
D(P_{t\ast}(\psi)\Vert E_\ast(\psi))&\leq \liminf_{n\to\infty}D(\psi_{n,t}\Vert E_\ast(\psi))\\
&=\liminf_{n\to\infty}D(n(\L_\ast+n)^{-1}(P_{t\ast}\psi)\Vert n(\L_\ast+n)^{-1}(E_\ast(\psi)))\\
&\leq\limsup_{n\to\infty}D(n(\L_\ast+n)^{-1}(P_{t\ast}\psi)\Vert n(\L_\ast+n)^{-1}(E_\ast(\psi)))\\
&\leq D(P_{t\ast}(\psi)\Vert E_\ast(\psi))
\end{align*}
for all $t\geq 0$. Thus (iii) holds for $\psi\in M_\ast^+$ with $E_\ast(\psi)\in B(\psi)$ faithful.

If we only assume that $E_\ast(\psi)$ is faithful and $\psi\leq \alpha E_\ast(\psi)$ for some $\alpha\geq 1$, then $\psi_\epsilon=(1-\epsilon)\psi+\epsilon E_\ast(\psi)$ satisfies
\begin{equation*}
D(P_{t\ast}(\psi_\epsilon)\Vert E_\ast(\psi_\epsilon))\to D(P_{t\ast}(\psi)\Vert E_\ast(\psi))
\end{equation*}
as $\epsilon\to 0$ for all $t\geq 0$ by the convexity and lower semicontinuity of $D$. Thus (iii) also holds in this case.

If only $\psi\in M_\ast^+$ with $E_\ast(\psi)$ faithful, then by \cite[Lemma 5, Theorem 5]{FHSW22} there exists sequences $(\psi_n)$ in $M_\ast^+$ and $(\alpha_n)$ in $[1,\infty)$ such that $\psi_n\leq \alpha_n E_\ast(\psi)$ and $\psi_n\to \psi$ in norm, $D(\psi_n\Vert E_\ast(\psi))\to D(\psi\Vert E_\ast(\psi))$. By the chain rule for the relative entropy \cite[Theorem 2]{Pet91},
\begin{equation*}
    D(\psi_n\Vert E_\ast(\psi))=D(\psi_n\Vert E_\ast(\psi_n))+D(E_\ast(\psi_n)\Vert E_\ast(\psi))\geq D(\psi_n\Vert E_\ast(\psi_n)).
\end{equation*}
Thus $\limsup_{n\to\infty}D(\psi_n\Vert E_\ast(\psi_n))\leq D(\psi\Vert E_\ast(\psi))$. It follows from the previous steps and the lower semicontinuity of the relative entropy that
\begin{align*}
    D(P_{t\ast}(\psi)\Vert E_\ast(\psi))
    &\leq \liminf_{n\to\infty}D(P_{t\ast}(\psi_n)\Vert E_\ast(\psi_n))\\
    &\leq e^{-\beta t}\limsup_{n\to\infty}D(\psi_n\Vert E_\ast(\psi_n))\\
    &\leq e^{-\beta t}D(\psi\Vert E_\ast(\psi)).
\end{align*}
Finally, to drop the assumption that $E_\ast(\psi)$ is faithful, we can consider $\psi_\epsilon=(1-\epsilon)\psi+\epsilon\phi$ and use convexity and lower semicontinuity of the relative entropy as above.
\end{proof}

\begin{remark}
    It is customary to only consider normal states instead of general positive normal linear functionals in the modified logarithmic Sobolev inequality. However, since both $\psi\colon D(\psi\Vert E_\ast(\psi))$ and $\psi\mapsto \I_\L(\psi\Vert E_\ast(\psi))$ are $1$-homogeneous, all the equivalent statements from the previous theorem are equivalent to the corresponding statements for states.
\end{remark}

\begin{remark}
For tracially symmetric quantum Markov semigroups, the entropy production is usually defined as $\I_\L(\rho)=\tau(\L_\ast(\rho)\log\rho)$ without reference to any invariant state. A similar definition for non-tracial states is problematic because $\psi\log\psi$ is not in $L^1(M)$.

However, up to technical restrictions which reference states are permissible in the definition of $\I_\L$, the entropy production in this setting is still independent of the choice of invariant state. Indeed, if $\phi_1$, $\phi_2$ are normal faithful states invariant under $(P_t)$ and $\phi_2\in B(\phi_1)$, then 
\begin{equation*}
E(\log h_{\phi_2}-\log h_{\phi_1})=E(\log h_{\phi_1\circ E}-\log h_{\phi_2\circ E})=\log h_{\phi_1}-\log h_{\phi_2}
\end{equation*}
by a similar argument as in the proof of Lemma \ref{lem:entropy_extension}. In other words, $\log h_{\phi_1}-\log h_{\phi_2}\in \ker(\L)$.

Thus, if $\psi\in B(\phi_1)=B(\phi_2)$, then
\begin{align*}
\I_\L(\psi\Vert \phi_1)-\I_\L(\psi\Vert \phi_1)=\L_\ast(\psi)(\log h_{\phi_2}-\log h_{\phi_1})=0.
\end{align*}
In the context of KMS-symmetric quantum Markov semigroups on matrix algebras, the same observation was made in \cite[Lemma 2]{Capel21}.
\end{remark}

\begin{definition}[Modified logarithmic Sobolev inequality]
Let $(P_t)$ be a quantum Markov semigroup and $\beta>0$. We say that $(P_t)$ satisfies the \emph{modified logarithmic Sobolev inequality} $\mathrm{MLSI}(\beta)$ if one of the following equivalent properties of the previous theorem holds.

We say that $(P_t)$ satisfies the \emph{complete modified logarithmic Sobolev inequality} $\mathrm{CMLSI}(\beta)$ if $(P_t\otimes \id_N)$ satisfies $\mathrm{MLSI}(\beta)$ for every $\sigma$-finite von Neumann algebra $N$.
\end{definition}

\begin{corollary}
    If the quantum Markov semigroup $(P_t)$ has a faithful normal invariant state and satisfies $\mathrm{MLSI}(\beta)$ for some $\beta>0$, then $P_{t\ast}(\psi)\to E_\ast(\psi)$ in norm as $t\to \infty$ for all $\psi\in M_\ast$.
\end{corollary}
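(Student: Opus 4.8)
The plan is to reduce the assertion to positive normal functionals and then combine the exponential decay of the relative entropy from assertion (iii) of \Cref{thm:main} with Pinsker's inequality. Write $S$ for the set of $\psi\in M_\ast^+$ such that $P_{t\ast}(\psi)\to E_\ast(\psi)$ in norm as $t\to\infty$. Since every $\psi\in M_\ast$ is a linear combination of four elements of $M_\ast^+$ and both $P_{t\ast}$ and $E_\ast$ are linear, it suffices to prove $S=M_\ast^+$. First I would record that $S$ is closed under norm limits inside $M_\ast^+$: as $P_{t\ast}$ is a contraction and $E_\ast$ has norm one, one has $\norm{P_{t\ast}(\psi)-E_\ast(\psi)}\leq 2\norm{\psi-\psi_k}+\norm{P_{t\ast}(\psi_k)-E_\ast(\psi_k)}$ for any $\psi_k\in S$, so letting first $k$ and then $t$ tend to infinity shows $\psi\in S$.

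The core case is $D(\psi\Vert E_\ast(\psi))<\infty$: using $E_\ast(P_{t\ast}(\psi))=E_\ast(\psi)$ and assertion (iii) of \Cref{thm:main} (equivalent to $\mathrm{MLSI}(\beta)$) gives $D(P_{t\ast}(\psi)\Vert E_\ast(\psi))\leq e^{-\beta t}D(\psi\Vert E_\ast(\psi))\to 0$, and since $P_{t\ast}(\psi)$ and $E_\ast(\psi)$ are positive functionals of the same finite total mass, Pinsker's inequality upgrades this to $\norm{P_{t\ast}(\psi)-E_\ast(\psi)}\to 0$, so $\psi\in S$. To remove the finiteness hypothesis I would argue as in the last part of the proof of \Cref{thm:main}. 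If $E_\ast(\psi)$ is faithful, take the sequence $\psi_n\in M_\ast^+$ with $\psi_n\leq\alpha_n E_\ast(\psi)$ and $\psi_n\to\psi$ in norm from \cite[Lemma 5, Theorem 5]{FHSW22}; the domination $\psi_n\leq\alpha_n E_\ast(\psi)$ makes the relative modular operator bounded, hence $D(\psi_n\Vert E_\ast(\psi))<\infty$, and the chain rule \cite[Theorem 2]{Pet91} yields $D(\psi_n\Vert E_\ast(\psi_n))\leq D(\psi_n\Vert E_\ast(\psi))<\infty$. Thus each $\psi_n\in S$ by the core case, and $\psi\in S$ by closedness of $S$. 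Finally, for arbitrary $\psi\in M_\ast^+$ set $\psi_\epsilon=(1-\epsilon)\psi+\epsilon\phi$; then $E_\ast(\psi_\epsilon)=(1-\epsilon)E_\ast(\psi)+\epsilon\phi\geq\epsilon\phi$ is faithful, so $\psi_\epsilon\in S$ by the previous step, and $\psi_\epsilon\to\psi$ in norm as $\epsilon\to 0$ gives $\psi\in S$.

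I expect the only genuine obstacle to be the case $D(\psi\Vert E_\ast(\psi))=\infty$, which is exactly what forces the passage through dominated functionals; but the needed approximation and chain-rule manipulation are already available from the proof of \Cref{thm:main} and from \cite{FHSW22,Pet91}. The remaining ingredients — norm-closedness of $S$, Pinsker's inequality for normal states on a general von Neumann algebra, the invariance $E_\ast(P_{t\ast}(\psi))=E_\ast(\psi)$, and the fact that $E_\ast(\psi)$ is automatically nonzero (it has the same total mass as $\psi$ because $E(1)=1$) — are all routine.
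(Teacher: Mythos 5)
Your proof is correct and follows the same skeleton as the paper's: exponential entropy decay from assertion (iii) of \Cref{thm:main} plus Pinsker's inequality settles the case $D(\psi\Vert E_\ast(\psi))<\infty$, and the general case follows from norm-closedness of the set of good functionals together with an approximation argument and linearity. The only difference is in the approximation step, where you work harder than necessary: the paper simply invokes \Cref{lem:dense_pos_cone_Lp}, which says that $B(\phi)$ is norm dense in $M_\ast^+$, and observes that every $\psi\in B(\phi)$ has $D(\psi\Vert E_\ast(\psi))\leq D(\psi\Vert\phi)<\infty$, so the finite-entropy case already covers a dense set. Your detour through the dominated approximants of \cite{FHSW22}, the chain rule of \cite{Pet91}, and the extra $\epsilon$-perturbation to make $E_\ast(\psi_\epsilon)$ faithful is valid (that machinery is genuinely needed inside the proof of \Cref{thm:main}), but here it can be replaced by the one-line density statement. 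One cosmetic point: Pinsker's inequality as cited is stated for states, so it is cleanest to normalize $\psi$ first and then use $1$-homogeneity of both $D$ and the norm, as the paper does, rather than asserting an equal-mass version directly.
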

\begin{proof}
    First let $\psi\in M_\ast^+$ with $\psi(1)=1$ and $D(\psi\Vert E_\ast(\psi))<\infty$. By \Cref{thm:main}, we have $D(P_{t\ast}(\psi)\Vert E_\ast(\psi))\leq e^{-\beta t}D(\psi\Vert E_\ast(\psi))$ for all $t\geq 0$. By the quantum Pinsker inequality \cite[Theorem 3.1]{HOT81},
    \begin{equation*}
        \norm{P_{t\ast}(\psi)-E_\ast(\psi)}\leq \sqrt 2D(P_{t\ast}(\psi)\Vert E_\ast(\psi))\leq \sqrt 2 e^{-\beta t}D(\psi\Vert E_\ast(\psi))\to 0
    \end{equation*}
    as $t\to \infty$.

    By homogeneity, the same is true without the condition $\psi(1)=1$. Finally, since $B(\phi)\subset \{\psi\in M_\ast^+\mid D(\psi\Vert E(\psi))<\infty\}$ is dense in $M_\ast^+$ by \Cref{lem:dense_pos_cone_Lp}, the claim for arbitrary $\psi\in M_\ast$ follows by approximation and linearity.
\end{proof}

\section{Modified logarithmic Sobolev inequality through intertwining}\label{sec:MLSI_intertwining}

So far we have not assumed any kind of detailed balance condition on the semigroup for the characterization of relative entropy decay. Detailed balance however can be very useful to obtain bounds on the exponential decay rate (see \cite{KT13,CM15} for example). In this section we illustrate how the intertwining method developed for GNS-symmetric semigroups on finite-dimensional von Neumann algebras \cite{CM17,CM20,MWZ24} and tracially symmetric semigroups on finite von Neumann algebras \cite{WZ21} (see also \cite{BGJ22,LJL24} for a closely related approach) can be extended to GNS-symmetric quantum Markov semigroups on arbitrary von Neumann algebras. 

We keep the setup from the previous section: $M$ is a von Neumann algebra and $\phi$ is a faithful normal state on $M$. A quantum Markov semigroup $(P_t)$ is \emph{GNS-symmetric with respect to $\phi$} if 
\begin{equation*}
    \phi(P_t(x)^\ast y)=\phi(x^\ast P_t(y))
\end{equation*}
for all $x,y\in M$ and $t\geq 0$.

Note that by \cite[Lemma 4.16]{GJLL25}, if a quantum Markov semigroup is GNS-symmetric with respect to a faithful normal state, it is also GNS-symmetric with respect to any other invariant state.

One advantage of GNS symmetry is that the semigroup on $M$ gives rise to a strongly continuous symmetric contraction semigroup $(T_t)$ on $L^2(M)$, called the \emph{GNS implementation} of $(P_t)$. On the dense subset $\{xh_\phi^{1/2}\mid x\in M\}$, it is defined by $T_t(xh_\phi^{1/2})=P_t(x) h_\phi^{1/2}$. By the spectral theorem, $T_t$ converges strongly to to the orthogonal projection onto its fixed-point space as $t\to\infty$. As a consequence, $P_t(x)\to E(x)$ in the strong operator topology as $t\to\infty$, where $E$ is the conditional expectation onto the fixed-point algebra of $(P_t)$. This has the following consequence for the long-time behavior of the predual semigroup.

\begin{lemma}\label{lem:long_time_GNS}
    If $(P_t)$ is a GNS-symmetric quantum Markov semigroup and $E$ is the $\phi$-preserving conditional expectation onto the fixed-point algebra, then $P_{t\ast}(\psi)\to E_\ast(\psi)$ as $t\to\infty$ for all $\psi\in M_\ast$.
\end{lemma}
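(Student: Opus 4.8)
The plan is to deduce the $M_\ast$-norm convergence from the strong convergence $P_t(x)\to E(x)$ on $M$ that has just been recorded, by first reducing to a convenient norm-dense subset of $M_\ast\cong L^1(M)$ on which the predual semigroup acts transparently thanks to GNS symmetry, and then transporting a soft $L^2$-level convergence statement down to $L^1$ via the noncommutative Hölder inequality.

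\textbf{Reduction to a dense set.} Since $P_{t\ast}$ and $E_\ast$ are contractions on $M_\ast$, the set $S=\{\psi\in M_\ast:\norm{P_{t\ast}(\psi)-E_\ast(\psi)}\to 0\text{ as }t\to\infty\}$ is a norm-closed linear subspace, so it suffices to exhibit a norm-dense subspace contained in $S$. I would take $D=\{h_\phi a:a\in M\}\subseteq L^1(M)$, the image of $M$ under the linear map $a\mapsto h_\phi a$. It is norm-dense: if $m\in M=(M_\ast)^\ast$ annihilates $D$, then $\phi(am)=\tr(h_\phi am)=0$ for all $a\in M$, and the choice $a=m^\ast$ forces $\phi(m^\ast m)=0$, hence $m=0$ because $\phi$ is faithful; Hahn--Banach then gives $\overline{D}=M_\ast$.

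\textbf{The action of the predual semigroup on $D$.} This is where GNS symmetry enters. For $a,y\in M$, applying the GNS-symmetry identity with $x=a^\ast$ and using that $P_t$ is $\ast$-preserving gives $\phi(aP_t(y))=\phi(P_t(a)y)$, so that $P_{t\ast}(h_\phi a)=h_\phi P_t(a)$. The $\phi$-preserving conditional expectation $E$ onto the fixed-point algebra satisfies the same type of identity (either because it is itself GNS-symmetric, or directly: $\phi(aE(y))=\phi(E(aE(y)))=\phi(E(a)E(y))=\phi(E(E(a)y))=\phi(E(a)y)$ by the bimodule property and $\phi\circ E=\phi$), so $E_\ast(h_\phi a)=h_\phi E(a)$. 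Thus, writing $c_t:=P_t(a)-E(a)\in M$, the claim $D\subseteq S$ amounts to $\norm{h_\phi c_t}_1\to 0$.

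\textbf{From strong convergence to $L^1$-convergence.} Applying the given strong convergence $P_t(x)\to E(x)$ to $x=a^\ast$ and using $P_t(a^\ast)=P_t(a)^\ast$, $E(a^\ast)=E(a)^\ast$, one obtains $c_t^\ast h_\phi^{1/2}\to 0$ in $L^2(M)$, i.e. $\norm{h_\phi^{1/2}c_t}_2^2=\phi(c_tc_t^\ast)\to 0$. Factoring $h_\phi c_t=h_\phi^{1/2}\cdot h_\phi^{1/2}c_t$ and using noncommutative Hölder $\norm{xy}_1\le\norm{x}_2\norm{y}_2$ in the Haagerup $L^p$-spaces together with $\norm{h_\phi^{1/2}}_2=\phi(1)^{1/2}=1$, we get
\begin{align*}
\norm{P_{t\ast}(h_\phi a)-E_\ast(h_\phi a)}_{M_\ast}&=\norm{h_\phi c_t}_1=\norm{h_\phi^{1/2}\cdot h_\phi^{1/2}c_t}_1\\
&\le\norm{h_\phi^{1/2}}_2\,\norm{h_\phi^{1/2}c_t}_2=\norm{h_\phi^{1/2}c_t}_2\to 0.
\end{align*}
Hence $D\subseteq S$, and the reduction step finishes the proof. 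The conceptual obstacle — and the only place an idea is needed — is that strong, or even $\sigma$-weak, convergence of $P_t(x)\to E(x)$ on $M$ only yields weak$^\ast$ convergence of $P_{t\ast}(\psi)$ in $M_\ast$, which is strictly weaker than the asserted norm convergence; GNS symmetry is exactly the extra input that bridges this gap through the identity $P_{t\ast}(h_\phi a)=h_\phi P_t(a)$. The one technical subtlety is that one needs strong$^\ast$ rather than merely strong convergence of $c_t$ (equivalently, control of $c_t^\ast h_\phi^{1/2}$, not just $c_t h_\phi^{1/2}$), which is why the $\ast$-preservation of $P_t$ and $E$ must be used.
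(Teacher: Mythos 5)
Your proof is correct and takes essentially the same route as the paper's: restrict to the norm-dense set of functionals with density $h_\phi a$ (the paper uses $x\phi$, i.e.\ $xh_\phi$, which is the same up to adjoints), use GNS symmetry to identify $P_{t\ast}$ there with $a\mapsto P_t(a)$, and upgrade the $L^2$-convergence of the GNS implementation to $L^1$-convergence via Hölder before extending by contractivity. The extra details you supply (the Hahn--Banach density argument and the verification that $E_\ast(h_\phi a)=h_\phi E(a)$) are correct and merely make explicit what the paper leaves implicit.
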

\begin{proof}
    Since $\phi$ is faithful, the set $\{x\phi\mid x\in M\}$ is dense in $M_\ast$. By GNS symmetry,
    \begin{equation*}
        P_{t\ast}(x\phi)(y)=(x\phi)(P_t(y))=\phi(P_t(y)x)=\phi(yP_t(x))=(P_t(x)\phi)(y)
    \end{equation*}
    for all $x,y\in M$. Thus $P_{t\ast}(x\phi)=P_t(x)\phi$.

    As discussed above, $P_t(x)h_\phi^{1/2}\to E(x)h_\phi^{1/2}$ as $t\to\infty$. By Hölder's inequality, $P_t(x)h_\phi\to E(x)h_\phi$, which translates to $P_t(x)\phi\to E(x)\phi$ under the natural isometric isomorphism $L^1(M)\cong M_\ast$. Finally, a similar argument as above shows that $E(x)\phi=E_\ast(x\phi)$. Thus $P_{t\ast}(x\phi)\to E_\ast(x\phi)$ as $t\to\infty$. Since $\norm{P_{t\ast}}=1$, the same convergence holds for all $\psi\in M_\ast$.
\end{proof}

\begin{lemma}\label{lem:FM_implies_MLSI}
Let $(P_t)$ be a GNS-symmetric quantum Markov semigroup with generator $\L$ and let $\beta>0$. If
\begin{equation*}
\I_\L(P_{t\ast}(\psi)\Vert E_\ast(\psi))\leq e^{-\beta t}\I_\L(\psi\Vert E_\ast(\psi))
\end{equation*}
for all $\dom(\L_\ast)$ with $E_\ast(\psi)\in B(\psi)$ faithful and $t\geq 0$, then $(P_t)$ satisfies $\mathrm{MLSI}(\beta)$.
\end{lemma}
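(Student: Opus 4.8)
The plan is to prove assertion~(i) of \Cref{thm:main} for the reference state $\phi:=E_\ast(\psi)$; since that theorem shows (i) is equivalent to $\mathrm{MLSI}(\beta)$, this suffices. Fix $\psi\in\dom(\L_\ast)$ with $E_\ast(\psi)\in B(\psi)$ faithful. Because both $D$ and $\I_\L$ are $1$-homogeneous in their pair of arguments, I may rescale and assume $\psi(1)=1$, so that $\phi=E_\ast(\psi)$ is a faithful normal state with $P_{t\ast}(\phi)=\phi$ for all $t$; note also $\psi\in B(\phi)\cap\dom(\L_\ast)$, since the defining relation of $B_\alpha$ is symmetric in its two arguments, and $E_\ast(P_{s\ast}(\psi))=\phi$ for every $s\geq 0$, so the hypothesis of the lemma reads $\I_\L(P_{s\ast}(\psi)\Vert\phi)\leq e^{-\beta s}\I_\L(\psi\Vert\phi)$.

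First I would invoke the deBruijn identity \Cref{thm:DeBruijn_general} along the trajectory $(P_{s\ast}(\psi))_{s\geq 0}$: the map $s\mapsto D(P_{s\ast}(\psi)\Vert\phi)$ is $C^1$ with derivative $-\I_\L(P_{s\ast}(\psi)\Vert\phi)$, so integrating and using the hypothesis,
\begin{equation*}
D(\psi\Vert\phi)-D(P_{t\ast}(\psi)\Vert\phi)=\int_0^t\I_\L(P_{s\ast}(\psi)\Vert\phi)\,ds\leq\frac{1-e^{-\beta t}}{\beta}\,\I_\L(\psi\Vert\phi)
\end{equation*}
for all $t\geq 0$. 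The remaining task is to let $t\to\infty$ and show $D(P_{t\ast}(\psi)\Vert\phi)\to 0$; once this is known we get $D(\psi\Vert\phi)\leq\frac1\beta\I_\L(\psi\Vert\phi)$, i.e. $\beta D(\psi\Vert E_\ast(\psi))\leq\I_\L(\psi\Vert E_\ast(\psi))$, which is exactly assertion~(i).

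For the entropy decay I would use that $P_{t\ast}(\psi)\in B_\alpha(\phi)$ for a fixed $\alpha$ and all $t$ (because $P_t$ is positivity-preserving and fixes $\phi$), so $D(P_{t\ast}(\psi)\Vert\phi)=P_{t\ast}(\psi)(x_t)$ with $x_t:=\log h_{P_{t\ast}(\psi)}-\log h_\phi$ and $\norm{x_t}\leq\log\alpha$. Here GNS symmetry enters: \Cref{lem:long_time_GNS} gives $P_{t\ast}(\psi)\to E_\ast(\psi)=\phi$ in norm, and then \Cref{lem:rel_Hamiltonian_cont_II} gives $x_t\to 0$ in the weak$^\ast$ topology. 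Splitting
\begin{equation*}
\abs{P_{t\ast}(\psi)(x_t)}\leq\norm{P_{t\ast}(\psi)-\phi}\,\log\alpha+\abs{\phi(x_t)}
\end{equation*}
shows both terms vanish as $t\to\infty$, so $D(P_{t\ast}(\psi)\Vert\phi)\to 0$ and the proof is complete.

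The main obstacle is this last convergence: relative entropy is only lower semicontinuous, so norm convergence $P_{t\ast}(\psi)\to\phi$ alone gives only $\liminf_tD(P_{t\ast}(\psi)\Vert\phi)\geq 0$, which is useless. The point is that on an order interval $B_\alpha(\phi)$ the relative entropy with respect to $\phi$ has the closed pairing form $\psi\mapsto\psi(\log h_\psi-\log h_\phi)$ with a uniformly norm-bounded, weak$^\ast$-continuous ``relative Hamiltonian'', which promotes norm convergence of the functionals to convergence of the entropies. (Since $s\mapsto D(P_{s\ast}(\psi)\Vert\phi)$ is nonincreasing by the data processing inequality, controlling a subsequence would already suffice, but the pairing estimate yields the full limit directly.)
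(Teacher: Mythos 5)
Your proposal is correct and follows essentially the same route as the paper: the deBruijn identity \Cref{thm:DeBruijn_general} gives the integral representation of the entropy difference, the hypothesis bounds the integrand, and \Cref{lem:long_time_GNS} supplies the norm convergence $P_{t\ast}(\psi)\to E_\ast(\psi)$ needed to show $D(P_{t\ast}(\psi)\Vert E_\ast(\psi))\to 0$. The only cosmetic difference is that the paper concludes this last convergence directly from Araki's continuity of the relative entropy on order intervals \cite[Theorem 3.7]{Ara78}, whereas you re-derive it from the pairing $D(\psi\Vert\phi)=\psi(\log h_\psi-\log h_\phi)$ together with \Cref{lem:rel_Hamiltonian_cont_II}, which rests on the same result.
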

\begin{proof}
Let $\psi\in\dom(\L_\ast)$ with $E_\ast(\psi)\in B(\psi)$ faithful. By \Cref{lem:long_time_GNS}, we have $P_{t\ast}(\psi)\to E_\ast(\psi)$ in norm as $t\to\infty$. Moreover $P_{t\ast}(\psi)\in B_\alpha(E_\ast(\psi))$. It follows from \cite[Theorem 3.7]{Ara78} that 
\begin{equation*}
D(P_{t\ast}(\psi)\Vert E_\ast(\psi))\to D(E_\ast(\psi)\Vert E_\ast(\psi))=0.
\end{equation*}
If we combine this with \Cref{thm:DeBruijn_general}, we get
\begin{align*}
D(\psi\Vert E_\ast(\psi))&=\lim_{T\to\infty}\int_0^T \I_\L(P_{t\ast}(\psi)\Vert E_\ast(\psi))\,dt\\
&\leq \I_\L(\psi\Vert E_\ast(\psi))\int_0^\infty e^{-\beta t}\,dt\\
&=\beta^{-1}\I_\L(\psi\Vert E_\ast(\psi)).\qedhere
\end{align*}
\end{proof}

\begin{definition}[Fisher monotonicity]
Let $(P_t)$ be a GNS-symmetric quantum Markov semigroup with generator $\L$ and $\beta\in\IR$. We say that $(P_t)$ satisfies the Fisher monotonicity bound $\mathrm{FM}(\beta)$ if 
\begin{equation*}
\I_\L(P_{t\ast}(\psi)\Vert E_\ast(\psi))\leq e^{-\beta t}\I_\L(\psi\Vert E_\ast(\psi))
\end{equation*}
for all $\psi\in\dom(\L_\ast)$ with $E_\ast\in B(\psi)$ faithful and $t\geq 0$.

We say that $(P_t)$ satisfies the complete Fisher monotonicity bound $\mathrm{CFM}(\beta)$ if $(P_t\otimes\id_N)$ satisfies $\mathrm{FM}(\beta)$ for every $\sigma$-finite von Neumann algebra $N$.
\end{definition}

\begin{remark}
With this definition, the previous lemma can be reformulated as saying that $\mathrm{FM}(\beta)$ implies $\mathrm{MLSI}(\beta)$ for $\beta>0$ (and of course $\mathrm{CFM}(\beta)$ also implies $\mathrm{CMLSI}(\beta)$ for $\beta>0$). But unlike the modified logarithmic Sobolev inequality, $\mathrm{FM}(\beta)$ is also non-trivial for $\beta\leq 0$.

Moreover, $\mathrm{FM}(\beta)$ for $\beta>0$ implies not only that $t\mapsto D(P_{t\ast}(\psi)\Vert E_\ast(\psi))$ decays exponentially with rate $\beta$, but gives additional information on the second derivative of this function. In particular, this function is convex. This behavior was called \emph{convex entropy decay} in \cite{CDP09}. As noted there \cite[Remark 2.2]{CDP09}, it is in general stronger than $\mathrm{MLSI}(\beta)$.
\end{remark}

To state the intertwining criterion, we need the first-order differential calculus for GNS-symmetric quantum Markov semigroups developed in \cite{Wir22b,Wir24}. If $(P_t)$ is a GNS-symmetric quantum Markov semigroup and $(T_t)$ its GNS implementation on $L^2(M)$, define a quadratic form $\E$ on $L^2(M)$ by
\begin{align*}
    \dom(\E)&=\{a\in L^2(M)\mid \lim_{t\to 0}\frac 1 t\langle a,a-T_t(a)\rangle_2\text{ exists}\},\\
    \E(a)&=\lim_{t\to 0}\frac 1 t \langle a,a-T_t(a)\rangle_2.
\end{align*}
This form is called the \emph{modular quantum Dirichlet form} associated with $(P_t)$. The quadratic forms that arise in this way from GNS-symmetric quantum Markov semigroups can be characterized intrinsically, see \cite[Theorem 4.11]{Cip97}, \cite[Theorem 5.7]{GL95} in combination with \cite[Section 3]{Wir22b}.

Let
\begin{equation*}
    \mathfrak A_\E=\{xh_\phi^{1/2}\in \bigcap_{z\in \IC}\dom(\Delta_\phi^{iz})\mid \forall z\in\IC\,\exists y_z\in M\colon \Delta_\phi^{iz}(x h_\phi^{1/2})=y_z h_\phi^{1/2}\in\dom(\E)\}.
\end{equation*}
In \cite[Theorem 6.3]{Wir22b} it was shown that $\mathfrak A_\E$ is a core for $\E$ and if $xh_\phi^{1/2},h_\phi^{1/2}y\in \mathfrak A_\E$, then $xh_\phi^{1/2}y\in \mathfrak A_\E$. Let $A$ be the norm closure of $\{x\in M\mid xh_\phi^{1/2}\in \mathfrak A_\E\}$.

By \cite[Theorem 6.8]{Wir22b}, there exists a quadruple $(\H,\J,(\U_t),\delta)$ consisting of a Hilbert bimodule $\H$ over $A$, an anti-unitary involution $\J\colon \H\to\H$, a strongly continuous unitary group $(\U_t)$ on $\H$ and a closable operator $\delta\colon\mathfrak A_\E\to \H$ satisfying
\begin{itemize}
    \item $\J(x\xi y)=y^\ast (\J\xi)x^\ast$ for all $x,y\in A$, $\xi\in \H$
    \item $\U_t(x\xi y)=\sigma^\phi_t(x)\xi\sigma^\phi_t(y)$ for all $x,y\in A$, $\xi\in \H$, $t\in\IR$,
    \item $\U_t\J=\J\U_t$ for all $t\in\IR$,
    \item $\delta\circ\Delta_\phi^{it}=\U_t\circ\delta$ for all $t\in\IR$,
    \item $\delta\circ J_\phi=\J\circ\delta$,
    \item $\delta(xh_\phi^{1/2}y)=x\delta(h_\phi^{1/2}y)+\delta(xh_\phi^{1/2})y$ whenever $xh_\phi^{1/2},h_\phi^{1/2}y\in \mathfrak A_\E$
\end{itemize}
such that
\begin{equation*}
    \E(a)=\norm{\delta(a)}_\H^2
\end{equation*}
for all $a\in \mathfrak A_\E$.

Under the minimality condition that $\H$ is the closure of the $A$-bimodule generated by $\H$, the quadruple $(\H,\J,(\U_t),\delta)$ is essentially uniquely determined by $(P_t)$, up to a unitary bimodule isomorphism that intertwines the operators $\J$, $(\U_t)$ and $\delta$, and it is called the \emph{first-order differential calculus} associated with $(P_t)$.

The set $A$ is a weak$^\ast$ dense unital $C^\ast$-subalgebra of $M$. If the left and right actions of $A$ on $\H$ can be extended to normal maps on $M$, the quantum Markov semigroup $(P_t)$ is called \emph{$\Gamma$-regular}. A characterization of $\Gamma$-regularity in terms of the carré du champ operator $\Gamma$ is given in \cite[Theorem 7.2]{Wir22b}.

To prove the next theorem, we will again rely on Haagerup's reduction method. This time we need some more details of the construction. Let $G=\bigcup_{n\in\IN}2^{-n}\IZ$, $\tilde M=M\rtimes_{\sigma^\phi} G\subset \IB(\ell^2(G))\overline{\otimes} M$, $\tilde\phi$ the dual weight of $\phi$ on $M$ and $E\colon \tilde M\to M$ the unique normal faithful conditional expectation such that $\tilde\phi=\phi\circ E$.

A sequence $(M_n)$ of finite von Neumann subalgebras of $M$ such that $\bigcup_n M_n$ is weak$^\ast$ dense in $M$ is constructed as follows. If $x\in \lambda(G)\subset M\rtimes_{\sigma^\phi}G$, then $x\in \mathcal Z(M_{\tilde \phi})$, the center of the centralizer of $\tilde\phi$. Moreover, $x\tilde\phi$ restricts to a trace on the centralizer $M_{x\phi}$. As shown in \cite[Section 2]{HJX10}, one can choose a sequence $(a_n)$ in $\lambda(G)$ such that $\bigcup_n M_{e^{-a_n}\tilde\phi}$ is weak$^\ast$ dense in $M$. Then one can take $M_n=M_{e^{-a_n}\tilde\phi}$. We write $\tau_n$ for the restriction of $e^{-a_n}\tilde\phi$ to $M_n$. Note that in contrast to the notation used in \cite{HJX10}, we do not normalize $\tau_n$.

\begin{theorem}\label{thm:intertwining}
Let $(P_t)$ be a $\Gamma$-regular GNS-symmetric quantum Markov semigroup, $(T_t)$ the GNS implementation of $(P_t)$ on $L^2(M)$, and $(\H,\J,(\U_t),\delta)$ the associated first-order differential calculus. Write $\pi_l^\H$ and $\pi_r^\H$ for the left and right action of $M$ on $\H$. 

Let $K\in\mathbb R$. If there exists a family $(\vec T_t)$ of bounded linear operators on $\H$ such that
\begin{itemize}
\item $\delta T_t\supset\vec T_t\delta$ for all $t\geq 0$,
\item $\vec T_t^\ast \pi_l^\H(\cdot)\vec T_t\leq_{\cp}e^{-2Kt} \pi_l^\H\circ P_t$ for all $t\geq 0$,
\item $\vec T_t^\ast \pi_r^\H(\cdot)\vec T_t\leq_{\cp}e^{-2Kt} \pi_r^\H\circ P_t$ for all $t\geq 0$,
\end{itemize}
then $(P_t)$ satisfies $\mathrm{CFM}(2K)$.

In particular, if $K>0$, then $(P_t)$ satisfies $\mathrm{CMLSI}(2K)$.
\end{theorem}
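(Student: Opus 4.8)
\emph{Overall strategy.}
The last sentence follows from the preceding assertion and \Cref{lem:FM_implies_MLSI} (applied to $P_t\otimes\id_N$), so the task is to prove $\mathrm{CFM}(2K)$. Since the three hypotheses are stable under tensoring the semigroup, its GNS implementation, the differential calculus and the operators $\vec T_t$ with $\id_N$ — complete positivity of a difference of maps being preserved under amplification — it suffices to show that the hypotheses imply $\mathrm{FM}(2K)$ in the stated generality. Using that $(P_t)$ is GNS-symmetric with respect to any faithful normal invariant state \cite[Lemma 4.16]{GJLL25} and the reductions used in the proof of \Cref{thm:main} together with the remark that the entropy production does not depend on the choice of invariant state, I would further reduce to proving
\begin{equation*}
\I_\L(P_{t\ast}(\psi)\Vert\phi)\le e^{-2Kt}\,\I_\L(\psi\Vert\phi)\qquad(t\ge 0)
\end{equation*}
for $\psi\in B(\phi)\cap\dom(\L_\ast)$, with $\phi$ the given (GNS-symmetric, invariant) reference state.

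\emph{Lifting to the crossed product.}
Form $\tilde M=M\rtimes_{\sigma^\phi}G$, $\tilde\phi=\phi\circ E$ and the finite subalgebras $(M_n,\tau_n)$ as recalled above, and let $\tilde P_t=(\id_{\IB(\ell^2(G))}\otimes P_t)|_{\tilde M}$, which is $\Gamma$-regular, GNS-symmetric with respect to $\tilde\phi$, and satisfies $E\tilde P_t=P_tE$. Under the identification $L^2(\tilde M,\tilde\phi)\cong\ell^2(G;L^2(M))$ its GNS implementation is $\id_{\ell^2(G)}\otimes T_t$, and a crossed-product construction for modular quantum Dirichlet forms and their first-order differential calculi (in the spirit of \cite{HJX10,Wir22b}) identifies the differential calculus of $(\tilde P_t)$ with $(\ell^2(G;\H),\tilde\J,(\tilde\U_t),\tilde\delta)$, where $\tilde\delta$ is the closure of $1_{C_c(G)}\odot\delta$ and the left and right actions are the restrictions to $\tilde M$ of $\id_{\IB(\ell^2(G))}\otimes\pi_l^\H$, $\id_{\IB(\ell^2(G))}\otimes\pi_r^\H$. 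Setting $\tilde{\vec T}_t=\id_{\ell^2(G)}\otimes\vec T_t$, the intertwining relation $\tilde\delta\tilde T_t\supset\tilde{\vec T}_t\tilde\delta$ is immediate and the two cp-domination bounds for $(\tilde{\vec T}_t)$ relative to $(\tilde P_t)$ follow by amplifying the hypotheses on $(\vec T_t)$ by $\id_{\IB(\ell^2(G))}$ and restricting to $\tilde M$.

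\emph{Restricting to the finite subalgebras and passing to the limit.}
Since $\tilde P_t$ fixes $\lambda(G)\ni a_n$ and commutes with $\sigma^{\tilde\phi}$, it is a module map over $\lambda(G)$, hence commutes with the modular group of $e^{-a_n}\tilde\phi$ and leaves $M_n=M_{e^{-a_n}\tilde\phi}$ invariant with $\tau_n\circ\tilde P_t=\tau_n$; thus $(\tilde P_t)$ restricts to a tracially symmetric quantum Markov semigroup $(P_t^{(n)})$ on $(M_n,\tau_n)$ with Dirichlet form $\tilde\E|_{L^2(M_n,\tau_n)}$, first-order differential calculus obtained by restricting $\tilde\delta$ and passing to the $M_n$-sub-bimodule $\H_n\subset\ell^2(G;\H)$ it generates, and intertwining operators $\vec T_t^{(n)}=P_{\H_n}\tilde{\vec T}_t|_{\H_n}$; the cp-domination bounds pass to these compressions because $\H_n$ reduces the actions of $M_n$ and $0\le P_{\H_n}\le 1$ commutes with $\pi_l^{(n)}(x)$ and $\pi_r^{(n)}(x)$ for $x\ge 0$ (and likewise for matrix amplifications). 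The intertwining theorem for tracially symmetric quantum Markov semigroups on finite von Neumann algebras \cite{WZ21} then gives $\mathrm{FM}(2K)$ for $(P_t^{(n)})$, i.e. $\I_{\L^{(n)}}(P_{t\ast}^{(n)}(\omega)\Vert\tilde\phi|_{M_n})\le e^{-2Kt}\I_{\L^{(n)}}(\omega\Vert\tilde\phi|_{M_n})$ for $\omega\in B(\tilde\phi|_{M_n})\cap\dom(\L_\ast^{(n)})$. Now for $\psi\in B(\phi)\cap\dom(\L_\ast)$ put $\tilde\psi=\psi\circ E$ and $\psi^n=\tilde\psi|_{M_n}$: restriction commutes with the predual semigroups and generators, $\tilde\L_\ast(\tilde\psi)=\L_\ast(\psi)\circ E$, by \Cref{lem:entropy_extension}(b) $E(\log h_{\tilde\psi}-\log h_{\tilde\phi})=\log h_\psi-\log h_\phi$, and by \Cref{lem:martingale_conv_entropy}(b) $\log h_{\psi^n}-\log h_{\tilde\phi|_{M_n}}\to\log h_{\tilde\psi}-\log h_{\tilde\phi}$ weak$^\ast$, uniformly norm bounded; combining these yields
\begin{equation*}
\I_{\L^{(n)}}(P_{t\ast}^{(n)}(\psi^n)\Vert\tilde\phi|_{M_n})\longrightarrow\I_{\tilde\L}(\tilde P_{t\ast}(\tilde\psi)\Vert\tilde\phi)=\I_\L(P_{t\ast}(\psi)\Vert\phi)\qquad(n\to\infty),
\end{equation*}
and likewise at $t=0$. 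Letting $n\to\infty$ in the $\mathrm{FM}(2K)$ inequality for $(P_t^{(n)})$ gives the desired bound, hence $\mathrm{CFM}(2K)$ and, for $K>0$, $\mathrm{CMLSI}(2K)$.

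\emph{Main obstacle.}
The limiting argument runs in close parallel to the proof of \Cref{prop:deriv_entropy}, so the real work is the middle part: showing that the first-order differential calculus genuinely lifts to the crossed product as $\ell^2(G;\H)$ with the asserted covariant bimodule structure, that it restricts cleanly to each $M_n$, and — above all — that the complete-positivity order in the two domination hypotheses survives both the amplification/restriction at the crossed-product level and the compression to $\H_n$. A secondary point is the bookkeeping of reference states across the three levels (the traces $\tau_n$ versus $\tilde\phi|_{M_n}$, and the fixed-point conditional expectations $E$, $\tilde E$ and their restrictions), which is exactly where the independence of the entropy production from the invariant state is needed.
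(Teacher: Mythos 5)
Your proposal follows essentially the same route as the paper: lift the semigroup, differential calculus and intertwining operators to the Haagerup crossed product $\tilde M=M\rtimes_{\sigma^\phi}G$, restrict to the finite subalgebras $M_n=M_{e^{-a_n}\tilde\phi}$, invoke the tracially symmetric intertwining theorem of \cite{WZ21} to get $\mathrm{FM}(2K)$ there, and pass to the limit via Lemmas \ref{lem:entropy_extension} and \ref{lem:martingale_conv_entropy}. The one place where the paper does concrete work that you only gesture at is the verification that the calculus restricts to $(M_n,\tau_n)$: one must use the embedding $x\mapsto xe^{-a_n/2}h_{\tilde\phi}^{1/2}$, the fact that $\tilde P_t$ is an $L(G)\otimes\IC 1$-bimodule map, and $\tilde\E(e^{a_n/2}h_{\tilde\phi}^{1/2})=0$ to check that $\delta_n(x)=\tilde\delta(xe^{-a_n/2}h_{\tilde\phi}^{1/2})$ satisfies the tracial Leibniz rule and $\J$-symmetry.
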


\begin{remark}
    In this theorem and its proof, we use the notation $\Phi\leq_{\mathrm{cp}}\Psi$ to express that $\Psi-\Phi$ is completely positive.
\end{remark}

\begin{proof}
We use the notation from the previous discussion. By \cite[Theorem 4.1]{HJX10}, $\id_{\ell^2(G)}\otimes P_t$ restricts to a completely positive map $\tilde P_t$ on $\tilde M$, and $(\tilde P_t)$ is a quantum Markov semigroup, which is GNS-symmetric with respect to $\tilde \phi$. Let $\tilde \E$ denote the associated quantum Dirichlet form.

The $L^2$ space of the crossed product $\tilde M$ is canonically identified with $\ell^2(G)\otimes L^2(M)$ and the GNS implementation of $(\tilde P_t)$ under this identification is $(1_{\ell^2(G)}\otimes T_t)$. Let $\tilde\E$ be the associated quantum Dirichlet form.

Let $\tilde \H=\ell^2(G)\otimes \H$ with $\pi_l^{\tilde \H}$, $\pi_r^{\tilde \H}$ the restriction of $\id_{\IB(\ell^2(G))}\otimes \pi_l^\H$ and $\id_{\IB(\ell^2(G)}\otimes\pi_r^\H$ to $\tilde M$, let $\tilde\J=J\otimes \J$ with $J$ the involution on $\ell^2(G)$ given by $J\1_g=\1_{-g}$, let $\tilde\U_t=1_{\ell^2(G)}\otimes\U_t$, and let $\tilde \delta$ be the restriction of $1_{\ell^2(G)}\otimes \bar\delta$ to $\mathfrak A_{\tilde \E}$. A direct computation shows that $(\tilde\H,\tilde\J,(\tilde \U_t),\tilde\delta)$ satisfies the defining properties of a first-order differential calculus for $(\tilde P_t)$, except for the minimality condition, which can be achieved by replacing $\tilde \H$ by the closed bimodule generated by the range of $\tilde\delta$.

Since $T_t$ is continuous for the graph norm of $\delta$, the intertwining relation $\delta T_t\supset \vec T_t\delta$ can be extended to $\bar\delta T_t\supset\vec T_t\bar\delta$. Thus
\begin{equation*}
    (1\otimes\vec T_t)\tilde\delta\subset (1\otimes \vec T_t)(1\otimes\bar\delta)\subset (1\otimes\bar\delta)(1\otimes T_t).
\end{equation*}
Since $\mathfrak A_{\tilde \E}$ is invariant under $1\otimes T_t$, the restriction of the right side to $\mathfrak A_{\tilde \E}$ is $\tilde\delta(1\otimes T_t)$. Hence
\begin{equation*}
    (1\otimes \vec T_t)\tilde\delta\subset\tilde\delta(1\otimes T_t).
\end{equation*}
Furthermore,
\begin{align*}
    (1\otimes\vec T_t)^\ast \pi_l^{\tilde \H}(\cdot)(1\otimes\vec T_t)&=(\id_{\IB(\ell^2(G))}\otimes \vec T_t^\ast\pi_l^\H(\cdot)\vec T_t)|_{\tilde M}\\
    &\leq_{\mathrm{cp}} e^{-2Kt}(\id_{\IB(\ell^2(G))}\otimes \pi_l^\H\circ P_t)|_{\tilde M}\\
    &=e^{-2Kt}\pi_l^{\tilde \H}\circ\tilde P_t.
\end{align*}
The proof of $(1\otimes\vec T_t)^\ast \pi_r^{\tilde \H}(\cdot)(1\otimes\vec T_t)\leq_{\mathrm{cp}} e^{-2Kt}\pi_r^{\tilde \H}\circ\tilde P_t$ is analogous.

Moreover, as $L(G)\otimes\IC 1$ is pointwise invariant under $\tilde P_t$, it follows that $\tilde P_t$ is a bimodule map over $L(G)\otimes \IC 1$.

By \cite[Lemma 4.5]{GJLL25}, the semigroup $(\tilde P_t)$ restricts to a $\tau_n$-symmetric quantum Markov semigroup $(P_t^n)$ on $M_n$. Note that the tracial $L^2$ space $L^2(M_n,\tau_n)$ can be isometrically identified with a subspace of $L^2(\tilde M)$ via the map $x\mapsto x e^{-a_n/2} h_{\tilde \phi}^{1/2}$ for $x\in M_n$. Let $\E_n$ denote the quantum Dirichlet form associated with $(P_t^n)$ on $L^2(M_n,\tau_n)$. If $x\in M_n$, then
\begin{align*}
\E_n( x)&=\sup_{t\geq 0}\frac 1 t \tau_n(x^\ast(x-P_t^n(x)))\\
&=\sup_{t\geq 0}\frac 1 t\tilde\phi((xe^{-a_n/2})^\ast(xe^{-a_n/2}-P_t^n(x)e^{-a_n/2}))\\
&=\sup_{t\geq 0}\frac 1 t\tilde\phi((xe^{-a_n/2})^\ast(xe^{-a_n/2}-\tilde P_t(xe^{-a_n/2})))\\
&=\tilde \E(x e^{-a_n/2}h_{\tilde\phi}^{1/2}),
\end{align*}
where we used that $\tilde P_t$ is a bimodule map over $L(G)\otimes\IC 1$ for the third equality.

In particular,
\begin{equation*}
\dom(\E_n)\cap M_n=\{x\in M_n\mid xe^{-a_n/2}h_{\tilde\phi}^{1/2}\in \dom(\tilde \E)\}.
\end{equation*}
Note that on $M_n$, the modular group of $\tilde\phi$ acts as $\sigma^{\tilde\phi}_t(x)=e^{i a_n t}x e^{-i a_n t}$. Thus every element of $M_n$ is entire analytic for $\sigma^{\tilde\phi}$. Hence
\begin{equation*}
\dom(\E_n)\cap M_n=\{x\in M_n\mid xe^{-a_n/2}h_{\tilde\phi}^{1/2}\in\AA_{\tilde\E}\}
\end{equation*}
and
\begin{equation*}
\E_n(x)=\norm{\tilde\delta(x e^{-a_n/2} h_{\tilde\phi}^{1/2})}_{\tilde\H}^2
\end{equation*}
for $x\in \dom(\E_n)\cap M_n$.

Let
\begin{equation*}
\delta_n\colon \dom(\E_n)\cap M_n\to\tilde\H,\,\delta_n(x)=\tilde\delta(x e^{-a_n/2}h_{\tilde\phi}^{1/2}).
\end{equation*}
Note that since $a_n$ is in the centralizer of $\tilde \phi$ and $M_n$ is the centralizer of $e^{-a_n}\tilde \phi$, we have 
\begin{equation*}
xe^{-a_n/2}h_{\tilde\phi}^{1/2}=e^{-a_n/2}h_{\tilde\phi}^{1/2}x=h_{\tilde\phi}^{1/2}e^{-a_n/2}x
\end{equation*}
for all $x\in M_n$.

Since $\tilde \E(e^{a_n/2}h_{\tilde\phi}^{1/2})=0$, we have 
\begin{align*}
\delta_n(xy)&=\tilde\delta(xy e^{-a_n/2}h_{\tilde\phi}^{1/2})\\
&=\tilde\delta(xe^{-a_n/2}h_{\tilde\phi}^{1/2}\cdot e^{a_n/2}h_{\tilde\phi}^{1/2}\cdot y e^{-a_n/2}h_{\tilde\phi}^{1/2})\\
&=\delta_n( x)e^{a_n/2}e^{-a_n/2}y+x e^{-a_n/2}e^{a_n/2}\delta_n( y)\\
&=\delta_n( x)y+x\delta_n(y).
\end{align*}
Moreover,
\begin{align*}
\tilde \J\delta_n( x)&=\tilde \J\tilde\delta(xe^{-a_n/2}h_{\tilde\phi}^{1/2})\\
&=\tilde\delta(h_{\tilde\phi}^{1/2}e^{-a_n/2}x^\ast)\\
&=\tilde\delta(x^\ast e^{-a_n/2}h_{\tilde\phi}^{1/2})\\
&=\delta_n( x^\ast).
\end{align*}
Therefore, the first-order differential calculus associated with $\E_n$ is a corestriction of $(\tilde\H,\tilde\J,(\id),\delta_n)$ and $\E_n$ is $\Gamma$-regular.

By restriction,
\begin{itemize}
\item $\delta_n T_t^n\supset (1\otimes\vec T_t)\delta_n$ for all $t\geq 0$,
\item $(1\otimes\vec T_t)^\ast \pi_l^{\tilde\H}(\cdot)(1\otimes\vec T_t)\leq_{\cp}e^{-2Kt} \pi_l^{\tilde \H}\circ P_t^n$ for all $t\geq 0$,
\item $(1\otimes\vec T_t^\ast) \pi_r^{\tilde \H}(\cdot)(1\otimes\vec T_t)\leq_{\cp}e^{-2Kt} \pi_r^{\tilde \H}\circ P_t^n$ for all $t\geq 0$,
\end{itemize}
where $(T_t^n)$ is the GNS implementation of $(P_t^n)$ on $L^2(M_n,\tau_n)$.

Now we are in the position to apply the results for tracially symmetric quantum Markov semigroups. By \cite[Corollary 2.9, Theorem 3.1]{WZ21}, the semigroup $(P_t^n)$ satisfies $\mathrm{FM}(2K)$. By the approximation Lemmas \ref{lem:entropy_extension} and \ref{lem:martingale_conv_entropy}, this implies $\mathrm{FM}(2K)$ for $(P_t)$. Furthermore, as the assumptions of the theorem continue to hold if one replaces $(P_t)$ by $(P_t\otimes\id_N)$ for a $\sigma$-finite von Neumann algebra $N$, we get $\mathrm{CFM}(2K)$. Finally, the claim about $\mathrm{CMLSI}(2K)$ follows from Lemma \ref{lem:FM_implies_MLSI}.
\end{proof}

\begin{remark}
    To extend the assumptions on $(P_t)$ to the semigroup $(\tilde P_t)$ on the crossed product, it is crucial that the inequalities in the assumption hold for the completely positive order, not only the positive order.

    In the tracial case or in finite dimensions, the implication from the intertwining criterion to Fisher monotonicity typically passes through a gradient estimate (also known as entropic curvature bound), which implies Fisher monotonicity via a gradient flow argument for a noncommutative Wasserstein metric on the state space \cite{CM17,CM20,WZ21}. In the infinite-dimensional GNS-symmetric case, both the formulation of the gradient estimate and the construction of the noncommutative Wasserstein distance are challenging open questions.
\end{remark}

\begin{example}
    Let $G$ be a countable (discrete) group and let $\pi\colon G\to O(H)$ be an orthogonal representation of $G$. A map $b\colon G\to H$ is called a $1$-cocycle if $b(gh)=b(g)+\pi(g)b(h)$ for all $g,h\in G$. We assume that there exists an orthonormal basis $(e_i)_{i\in I}$ of $H$ such that $\langle b(g^{-1})|e_i\rangle\in \{0,1\}$ for all $g\in G$ and $i\in I$. Such cocycles exist for example for free groups \cite[Proof of Lemma 1.2]{Haa79} and Coxeter groups \cite[Section 7.3]{Boz87}. In each case, $\norm{b(g)}^2$ is the word length of $g$ with respect to the standard generating set.

    Let $v_i=\sum_{g\in G}\langle b(g^{-1})|e_i\rangle\ket g\bra g\in \IB(\ell^2(G))$ for $i\in I$.  Define a quadratic form on the space $\mathrm{HS}(\ell^2(G))$ of Hilbert--Schmidt operators on $\ell^2(G)$ by
    \begin{align*}
        \E_i\colon \mathrm{HS}(\ell^2(G))\to [0,\infty),\,\E_i(x)&=\tr(\abs{v_i x-xv_i}^2).
    \end{align*}
    Clearly, $\E_i$ is bounded for every $i\in I$.

    Let $\sigma=\sum_{g\in G}w(g)\ket g\bra g$ for a weight $w\colon G\to (0,\infty)$ such that $\sum_{g\in G}w(g)=1$, and let $\phi=\tr(\,\cdot\,\sigma)$ on $\IB(\ell^2(G))$. By \cite[Example 5.1]{Wir24}, the form $\E_i$ is a modular quantum Dirichlet form (with respect to $\phi$) for every $i\in I$. Let $(P_t)$ denote the associated quantum Markov semigroup and $(T_t^i)$ its GNS implementation. A direct calculation shows that
    \begin{equation*}
        \E_i(y)=\tr(y^\ast(y-v_i y v_i-(1-v_i)y(1-v_i))
    \end{equation*}
    and
    \begin{equation*}
        T_t^i(y)=e^{-t}y+(1-e^{-t})(v_iy v_i+(1-v_i)y(1-v_i)).
    \end{equation*}

    Define quadratic forms on $\mathrm{HS}(\ell^2(G))$ by
    \begin{align*}
        \dom(\E)&=\left\{y\in \mathrm{HS}(\ell^2(G)): \sum_{i\in I}\E_i(y)<\infty\right\},\,\E(y)=\sum_{i\in I}\E_i(y),\\
        \dom(\E_{\neg i})&=\left\{y\in \mathrm{HS}(\ell^2(G)):\sum_{j\in I\setminus\{i\}}\E_j(y)<\infty \right\},\,\E_{\neg i}(y)=\sum_{j\in I\setminus\{i\}}\E_j(y)
    \end{align*}
    As a sum of bounded quadratic forms, $\E$ and $\E_{\neg i}$ are closed. Moreover, since linear combinations of the operators $\ket g\bra h$ with $g,h\in G$ belong to $\dom(\E)\subset \dom(\E_{\neg i})$, the forms $\E$ and $\E_{\neg i}$ are densely defined. It follows from the characterization in \cite[Theorem 4.11]{Cip97} that $\E$ and $\E_{\neg i}$ are  modular quantum Dirichlet forms. Let $(P_t)$ and $(P_t^{\neg i})$ denote the associated GNS-symmetric quantum Markov semigroups on $\IB(\ell^2(G))$ and $(T_t)$, $(T_t^{\neg i})$ their GNS implementation on $\mathrm{HS}(\ell^2(G))$.

    In fact, if $y\in \dom(\E)$ and $g,h\in G$, then
    \begin{align*}
        \E(y,\ket g\bra h)&=\sum_{i\in I}\tr((v_i y-yv_i)^\ast\langle b(g^{-1})-b(h^{-1})|e_i\rangle\ket g\bra h)\\
        &=\sum_{i\in I}\langle b(g^{-1})-b(h^{-1})|e_i\rangle\tr(y^\ast(v_i\ket g\bra h-\ket g\bra h v_i))\\
        &=\sum_{i\in I}\langle b(g^{-1})-b(h^{-1})|e_i\rangle^2\tr(y^\ast \ket g\bra h)\\
        &=\norm{b(g^{-1})-b(h^{-1})}^2\tr(y^\ast \ket g\bra h).
    \end{align*}

    Thus $\ket g\bra h$ belongs to the domain of the generator $\L_2$ of $\E$ and $\L_2(\ket g\bra h)=\norm{b(g^{-1})-b(h^{-1})}^2\ket g\bra h$. In particular, the linear hull of $\{\ket g\bra h : g,h\in G\}$ is a core for $\L_2^n$ for all $n\in\IN$. A similar calculation shows that $\ket g\bra h$ is also an eigenvector of $T_t^i$ and $T_t^{\neg i}$. Since they have a common orthonormal basis of eigenvectors, the operators $T_t^i$ and $T_t^{\neg i}$ commute, and a direct calculation on the eigenbasis shows $T_t=T_t^i T_t^{\neg i}$.

    The first-order differential calculus for $\E$ can be described as follows: Let $H^\IC$ denote the complexification of $H$ and $\zeta\mapsto\bar\zeta$ the complex conjugation on $H^\IC$. Up to isomorphism, $\H=\mathrm{HS}(\ell^2(G))\otimes H^{\mathbb C}$ with the bimodule actions on the first tensor factor, $\delta(y)=\sum_{i\in I}[v_i,y]\otimes e_i$, $\J(y\otimes \zeta)=y^\ast\otimes\bar\zeta$ and $\U_t(y\otimes\zeta)=\sigma^{it}y\sigma^{-it}\otimes\zeta$. In particular, $(P_t)$ is $\Gamma$-regular.

    Since $\ket g\bra h$ is also an eigenvector of the operator $[v_i,\cdot\,]$ on $\mathrm{HS}(\ell^2(G))$ for all $i\in I$, we get $[v_i,T_t^i(y)]=T_t^i([v_i,y])$ for all $y\in \mathrm{HS}(\ell^2(G))$, and the same is true if $T_t^i$ is replaced by $T_t^{\neg i}$. In particular, if $E_i$ denotes the orthogonal projection onto the kernel of $[v_i,\cdot\,]$, then $T_t^i$ and $T_t^{\neg i}$ commute with $E_i$.

    Define $\vec T_t$ on $\mathrm{HS}(\ell^2(G))\otimes H$ by
    \begin{equation*}
        \vec T_t\left(\sum_{i\in I}y_i\otimes e_i\right)=\sum_{i\in I}T_t^{\neg i}(T_t^i(1-E_i)+e^{-2t}E_i)(y_i))\otimes e_i.
    \end{equation*}
    In particular, if $y\in\dom(\E)$, then
    \begin{align*}
        \vec T_t(\bar\delta(y))&=\sum_{i\in I}T_t^{\neg i}(T_t^i(1-E_i)+e^{-2t}E_i)([v_i,y]))\otimes e_i\\
        &=\sum_{i\in I}T_t([v_i,y])\otimes e_i=\sum_{i\in I}[v_i,T_t(y))]\otimes e_i\\
        &=\bar\delta(T_t(y)).
    \end{align*}
    By the Kadison--Schwarz inequality and GNS symmetry,
    \begin{equation*}
        \tr(\abs{T_t^{\neg i}(y)}^2x)\leq \tr(\abs{y}^2P_t^{\neg i}(x)).
    \end{equation*}
    Moreover,
    \begin{equation*}
        \tr(\abs{(T_t^i(1-E_i)+e^{-2t}E_i)(y)}^2x)\leq e^{-2t}\tr(\abs{y}^2P_t^i(x))
    \end{equation*}
    can be shown as in the proof of \cite[Corollary 3.17]{MWZ24}. Altogether, we have proved that
    \begin{equation*}
        \vec T_t^\ast \pi_r^\H(\cdot)\vec T_t\leq e^{-2t}\pi_r^\H\circ P_t.
    \end{equation*}
    The proof for $\pi_l^\H$ instead of $\pi_r^\H$ is analogous.

    To get bounds in the completely positive order, note that we only used that the operators $v_i$ are commuting projections. The quantum Dirichlet form associated with $(P_t\otimes \id)$ has the same form as $\E$ with $v_i$ replaced by $v_i\otimes 1$, which are again commuting projections. Hence the same argument applied to $(P_t\otimes \id)$ instead of $(P_t)$ gives the desired complete bounds.

    We have shown that $(P_t)$ satisfies the assumptions of \Cref{thm:intertwining}, hence it satisfies the Fisher monotonicity estimate $\mathrm{CFM}(2)$ and the modified logarithmic Sobolev inequality $\mathrm{CMLSI}(2)$.

    Note that if $g,s,t\in G$, then
    \begin{align*}
        \tr(\L_2(\ket t\bra s)\lambda_g \sigma^{1/2})&=\norm{b(s^{-1})-b(t^{-1})}^2 w(t)^{1/2}\delta_{s,gt}\\
        &=\norm{b(t^{-1})+\pi(t^{-1})b(g^{-1})-b(t^{-1})}^2\tr(\ket t\bra s\lambda_g \sigma^{1/2})\\
        &=\norm{b(g)}^2\tr(\ket t\bra s\lambda_g \sigma^{1/2}).
    \end{align*}
    Thus $\lambda_g\sigma^{1/2}\in \dom(\L_2)$ and $\L_2(\lambda_g \sigma^{1/2})=\norm{b(g)}^2\lambda_g \sigma^{1/2}$. Thus $T_t(\lambda_g \sigma^{1/2})=e^{-t\norm{b(g)}^2}\lambda_g\sigma^{1/2}$ and $P_t(\lambda_g)=e^{-t\norm{b(g)}^2}\lambda_g$. In particular, $(P_t)$ leaves $L(G)$ invariant.

    It is an interesting open question whether the restriction of $(P_t)$ to $L(G)$ satisfies $\mathrm{MLSI}(2)$. In the case when $G$ is a free group and $b$ the cocycle induced by the standard length function, this is true (see \cite[Example 5.16]{WZ21}, \cite[Corollary 5.8]{BGJ23}), but the general case, including the cocycle on Coxeter groups mentioned above, seems to be open.
\end{example}

\appendix
\section{Positive cones in noncommutative \texorpdfstring{$L^p$}{Lp} spaces}

The following result is well-known. Since we could not find a precise reference, we decided to include the short proof for the reader's convenience.

\begin{lemma}\label{lem:dense_pos_cone_Lp}
If $M$ is a von Neumann algebra and $\phi$ is a faithful normal state on $M$, then $B(\phi)=\{\psi\in M_\ast^+\mid\exists \alpha\geq 1\colon \alpha^{-1}\phi\leq \psi\leq\alpha \phi\}$ is dense in $M_\ast^+$.
\end{lemma}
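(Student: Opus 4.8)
The plan is to prove density of $B(\phi)$ in $M_\ast^+$ by a two-step approximation: first reduce to functionals that are dominated by a multiple of $\phi$ (i.e.\ $\psi \le \alpha\phi$), then perturb such a $\psi$ to also bound it from below by a multiple of $\phi$. For the first step, given $\psi \in M_\ast^+$, I would use the Radon--Nikodym-type structure available in the GNS representation: writing $\psi$ via a vector $\xi$ in the natural cone of $(M,\phi)$, one approximates $\psi$ in norm by the functionals $\psi_n(x) = \langle \xi, x\, e_n \xi\rangle$ where $e_n$ is a suitable increasing sequence of projections (spectral projections of the closed operator implementing $\psi$ relative to $\phi$), so that $\psi_n \le n\phi$ for each $n$ and $\psi_n \to \psi$. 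Concretely, it is cleanest to invoke the balanced weight / spatial derivative formalism: there is a positive self-adjoint operator $T$ affiliated with $M$ (or with the commutant, depending on the convention) with $\psi = \phi(T^{1/2} \cdot T^{1/2})$ in the appropriate sense, and truncating $T$ by $T \wedge n$ gives $\psi_n = \phi((T\wedge n)^{1/2}\cdot(T\wedge n)^{1/2}) \le n\phi$ with $\psi_n \nearrow \psi$ in norm.

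For the second step, suppose $\psi \in M_\ast^+$ satisfies $\psi \le \alpha\phi$ for some $\alpha \ge 1$. Then for $\epsilon \in (0,1)$ put $\psi_\epsilon = (1-\epsilon)\psi + \epsilon\phi$. This is a convex combination of positive normal functionals, hence positive and normal, and $\psi_\epsilon \to \psi$ in norm as $\epsilon \to 0$. Moreover $\psi_\epsilon \ge \epsilon\phi$ and $\psi_\epsilon \le (1-\epsilon)\alpha\phi + \epsilon\phi \le \max\{\alpha,1\}\phi = \alpha\phi$, so with $\gamma = \max\{\alpha, \epsilon^{-1}\}$ we get $\gamma^{-1}\phi \le \psi_\epsilon \le \gamma\phi$, i.e.\ $\psi_\epsilon \in B_\gamma(\phi) \subset B(\phi)$. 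Composing the two approximations: an arbitrary $\psi \in M_\ast^+$ is a norm limit of $\psi_n$ with $\psi_n \le n\phi$, and each $\psi_n$ is a norm limit of elements of $B(\phi)$; a diagonal argument then gives $\psi$ as a norm limit of elements of $B(\phi)$.

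The main obstacle is making the first step rigorous in a type III von Neumann algebra, where $\psi$ cannot be represented by a density matrix and one must either work with the spatial derivative $d\psi/d\phi'$ (Connes) or with the balanced weight $\theta$ on $M_2(M)$ and its modular structure. The key point to extract is simply: \emph{for every} $\psi \in M_\ast^+$ \emph{there is an increasing net (sequence suffices when $M$ is $\sigma$-finite)} $\psi_n \le n\phi$ \emph{converging to} $\psi$ \emph{in norm}. This is standard — it follows, for instance, from the fact that $\{\omega \in M_\ast^+ : \omega \le c\phi \text{ for some } c\}$ is a hereditary, norm-dense subcone of $M_\ast^+$, which in turn can be seen by approximating $\xi$ in the natural cone by $e_n\xi$ for spectral projections $e_n$ of the positive operator $\Delta_{\psi,\phi}$ (relative modular operator) or by a direct Hahn--Jordan-style argument using that bounded functionals dominated by $\phi$ correspond to elements of $M$ in the GNS space. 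Once this is granted, the second step is entirely elementary. I would write the second step in full and cite a standard reference (e.g.\ Takesaki's book, or the spatial derivative literature) for the density of the dominated subcone in the first step.
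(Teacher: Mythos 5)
Your proof is correct in outline, but it takes a genuinely different (two-step) route from the paper's one-step argument. The paper works directly in the Haagerup $L^1$/$L^2$ picture: since $\phi$ is faithful, $h_\phi^{1/2}$ is cyclic, so one picks $x_n\in M$ with $x_nh_\phi^{1/2}\to h_\psi^{1/2}$ in $L^2(M)$ and sets $h_{\psi_n}=h_\phi^{1/2}(x_n^\ast x_n+\tfrac1n)h_\phi^{1/2}$; the sandwiched form gives the two-sided bound $\tfrac1n h_\phi\leq h_{\psi_n}\leq(\norm{x_n}^2+\tfrac1n)h_\phi$ in one stroke, and H\"older gives $L^1$-convergence. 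Your decomposition --- first approximate by functionals dominated by $n\phi$, then mix with $\epsilon\phi$ to get the lower bound --- is perfectly viable, and your second step is complete as written; the diagonal argument at the end is fine. The one point you should repair in step one is the Radon--Nikodym formalism: for a non-tracial state $\phi$ the map $x\mapsto\phi(a^\ast xa)$ with $a\in M$ does \emph{not} in general satisfy $\phi(a^\ast\cdot a)\leq\norm{a}^2\phi$, so truncating a derivative $T$ in the form $\psi=\phi(T^{1/2}\cdot T^{1/2})$ does not yield $\psi_n\leq n\phi$. The correct sandwiched form is $h_\psi=h_\phi^{1/2}Th_\phi^{1/2}$ with $T$ affiliated with $M$ (equivalently, one works with the commutant action, i.e.\ approximates the natural-cone vector $\xi_\psi$ by $x'\xi_\phi$ with $x'\in M'$, which gives $\omega_{x'\xi_\phi}\leq\norm{x'}^2\phi$); your alternative sketch via the natural cone is the one that actually closes the gap. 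Once step one is phrased that way, your proof and the paper's are close cousins --- the paper simply merges the upper and lower bounds by adding the scalar $\tfrac1n$ before sandwiching, which avoids the separate $\epsilon$-perturbation and the appeal to density of the dominated subcone as a black box.
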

\begin{proof}
Since $\phi$ is faithful, $h_\phi^{1/2}$ is a cyclic vector for $M$ on $L^2(M)$. Thus for every $\psi\in M_\ast^+$ there exists a sequence $(x_n)$ in $M$ such that $x_n h_\phi^{1/2}\to h_\psi^{1/2}$ in $L^2(M)$. Let $\psi_n\in M_\ast^+$ such that $h_{\psi_n}=h_\phi^{1/2}(x_n^\ast x_n+\frac 1 n)h_\phi^{1/2}$.

By Hölder's inequality, $h_{\psi_n}\to h_\psi$ in $L^1(M)$. Moreover, $\frac 1n h_\phi\leq h_{\psi_n}\leq (\norm{x_n}^2+\frac 1 n)h_\phi$. Thus $\psi_n\in B(\phi)$ and $\psi_n\to \psi$ in norm in $M_\ast$.
\end{proof}


\bibliographystyle{alpha}
\bibliography{references_LSI_v2}

\begin{thebibliography}{FHSW22}

\bibitem[Ara76]{Ara76}
H.~Araki.
\newblock Relative entropy of states of von {N}eumann algebras.
\newblock {\em Publ. Res. Inst. Math. Sci.}, 11(3):809--833, 1975/76.

\bibitem[Ara78]{Ara78}
H.~Araki.
\newblock Relative entropy for states of von {N}eumann algebras. {II}.
\newblock {\em Publ. Res. Inst. Math. Sci.}, 13(1):173--192, 1977/78.

\bibitem[Bar17]{Bar17}
Ivan Bardet.
\newblock {Estimating the decoherence time using non-commutative Functional
  Inequalities}.
\newblock \url{https://arxiv.org/abs/1710.01039}, 2017.

\bibitem[BGJ22]{BGJ22}
Michael Brannan, Li~Gao, and Marius Junge.
\newblock Complete logarithmic {S}obolev inequalities via {R}icci curvature
  bounded below.
\newblock {\em Adv. Math.}, 394:Paper No. 108129, 60, 2022.

\bibitem[BGJ23]{BGJ23}
Michael Brannan, Li~Gao, and Marius Junge.
\newblock Complete logarithmic {S}obolev inequality via {R}icci curvature
  bounded below {II}.
\newblock {\em J. Topol. Anal.}, 15(3):741--794, 2023.

\bibitem[Bo{\.z}87]{Boz87}
Marek Bo{\.z}ejko.
\newblock Positive and negative definite kernels on discrete groups, 1987.
\newblock Lectures at Heidelberg University.

\bibitem[BT03]{BT03}
Sergey Bobkov and Prasad Tetali.
\newblock Modified log-{S}obolev inequalities, mixing and hypercontractivity.
\newblock In {\em Proceedings of the {T}hirty-{F}ifth {A}nnual {ACM}
  {S}ymposium on {T}heory of {C}omputing}, pages 287--296. ACM, New York, 2003.

\bibitem[CDPP09]{CDP09}
Pietro Caputo, Paolo Dai~Pra, and Gustavo Posta.
\newblock Convex entropy decay via the {B}ochner-{B}akry-{E}mery approach.
\newblock {\em Ann. Inst. Henri Poincar\'{e} Probab. Stat.}, 45(3):734--753,
  2009.

\bibitem[Cip97]{Cip97}
Fabio Cipriani.
\newblock Dirichlet forms and {M}arkovian semigroups on standard forms of von
  {N}eumann algebras.
\newblock {\em J. Funct. Anal.}, 147(2):259--300, 1997.

\bibitem[CM15]{CM15}
Raffaella Carbone and Andrea Martinelli.
\newblock Logarithmic {S}obolev inequalities in non-commutative algebras.
\newblock {\em Infin. Dimens. Anal. Quantum Probab. Relat. Top.},
  18(2):1550011, 30, 2015.

\bibitem[CM17]{CM17}
Eric~A. Carlen and Jan Maas.
\newblock Gradient flow and entropy inequalities for quantum {M}arkov
  semigroups with detailed balance.
\newblock {\em J. Funct. Anal.}, 273(5):1810--1869, 2017.

\bibitem[CM20]{CM20}
Eric~A. Carlen and Jan Maas.
\newblock Non-commutative {C}alculus, {O}ptimal {T}ransport and {F}unctional
  {I}nequalities in {D}issipative {Q}uantum {S}ystems.
\newblock {\em J. Stat. Phys.}, 178(2):319--378, 2020.

\bibitem[CRSF21]{Capel21}
Ángela Capel, Cambyse Rouzé, and Daniel Stilck~França.
\newblock The modified logarithmic {S}obolev inequality for quantum spin
  systems: classical and commuting nearest neighbour interactions.
\newblock \url{https://arxiv.org/abs/2009.11817}, 2021.

\bibitem[FHSW22]{FHSW22}
Thomas Faulkner, Stefan Hollands, Brian Swingle, and Yixu Wang.
\newblock Approximate recovery and relative entropy. {I}: {General} von
  {Neumann} subalgebras.
\newblock {\em Commun. Math. Phys.}, 389(1):349--397, 2022.

\bibitem[GJLL25]{GJLL25}
Li~Gao, Marius Junge, Nicholas LaRacuente, and Haojian Li.
\newblock Complete positivity order and relative entropy decay.
\newblock {\em Forum of Mathematics, Sigma}, 13:e31, 2025.

\bibitem[GL95]{GL95}
Stanis{\l}aw Goldstein and J.~Martin Lindsay.
\newblock K{MS}-symmetric {M}arkov semigroups.
\newblock {\em Math. Z.}, 219(4):591--608, 1995.

\bibitem[Gro75]{Gro75}
Leonard Gross.
\newblock Logarithmic {S}obolev inequalities.
\newblock {\em Amer. J. Math.}, 97(4):1061--1083, 1975.

\bibitem[Haa79]{Haa79}
Uffe Haagerup.
\newblock An example of a nonnuclear {$C\sp{\ast} $}-algebra, which has the
  metric approximation property.
\newblock {\em Invent. Math.}, 50(3):279--293, 1978/79.

\bibitem[Hia18]{Hia18}
Fumio Hiai.
\newblock Quantum {$f$}-divergences in von {N}eumann algebras. {I}. {S}tandard
  {$f$}-divergences.
\newblock {\em J. Math. Phys.}, 59(10):102202, 27, 2018.

\bibitem[HJX10]{HJX10}
Uffe Haagerup, Marius Junge, and Quanhua Xu.
\newblock A reduction method for noncommutative {$L_p$}-spaces and
  applications.
\newblock {\em Trans. Amer. Math. Soc.}, 362(4):2125--2165, 2010.

\bibitem[HKV17]{HKV17}
Stefan Huber, Robert K\"onig, and Anna Vershynina.
\newblock Geometric inequalities from phase space translations.
\newblock {\em J. Math. Phys.}, 58(1):012206, 28, 2017.

\bibitem[HOT81]{HOT81}
Fumio Hiai, Masanori Ohya, and Makoto Tsukada.
\newblock Sufficiency, {KMS} condition and relative entropy in von {N}eumann
  algebras.
\newblock {\em Pacific J. Math.}, 96(1):99--109, 1981.

\bibitem[KT13]{KT13}
Michael~J. Kastoryano and Kristan Temme.
\newblock Quantum logarithmic {S}obolev inequalities and rapid mixing.
\newblock {\em J. Math. Phys.}, 54(5):052202, 30, 2013.

\bibitem[LJL24]{LJL24}
Haojian Li, Marius Junge, and Nicholas LaRacuente.
\newblock Graph {H}örmander systems.
\newblock {\em Ann. Henri Poincaré}, 2024.

\bibitem[MWZ24]{MWZ24}
Florentin Münch, Melchior Wirth, and Haonan Zhang.
\newblock {Intertwining Curvature Bounds for Graphs and Quantum Markov
  Semigroups}.
\newblock \url{https://arxiv.org/abs/2401.05179}, 2024.

\bibitem[Pet91]{Pet91}
Denes Petz.
\newblock On certain properties of the relative entropy of states of operator
  algebras.
\newblock {\em Math. Z.}, 206(3):351--361, 1991.

\bibitem[Spo78]{Spo78}
Herbert Spohn.
\newblock Entropy production for quantum dynamical semigroups.
\newblock {\em J. Mathematical Phys.}, 19(5):1227--1230, 1978.

\bibitem[Ter81]{Ter81}
Marianne Terp.
\newblock {$L_p$} spaces associated with von {N}eumann algebras, 1981.
\newblock Report No. 3a + 3b, K{\o}benhavns Universitets Matematiske Institut.

\bibitem[Wat79]{Wat79}
Seiji Watanabe.
\newblock Ergodic theorems for dynamical semigroups on operator algebras.
\newblock {\em Hokkaido Math. J.}, 8(2):176--190, 1979.

\bibitem[Wir22]{Wir22b}
Melchior Wirth.
\newblock {The Differential Structure of Generators of GNS-symmetric Quantum
  Markov Semigroups}.
\newblock \url{https://arxiv.org/abs/2207.09247}, 2022.

\bibitem[Wir24]{Wir24}
Melchior Wirth.
\newblock Modular completely {D}irichlet forms as squares of derivations.
\newblock {\em Int. Math. Res. Not. IMRN}, (14):10597--10614, 2024.

\bibitem[WZ21]{WZ21}
Melchior Wirth and Haonan Zhang.
\newblock Complete gradient estimates of quantum {M}arkov semigroups.
\newblock {\em Comm. Math. Phys.}, 387(2):761--791, 2021.

\end{thebibliography}



\end{document}